\newtheorem{theorem}{Theorem}[section]
\newtheorem{remark}[theorem]{Remark}
\newtheorem{assumption}[theorem]{Assumption}
\newtheorem{lemma}[theorem]{Lemma}
\newtheorem{proposition}[theorem]{Proposition}
\newtheorem{definition}{Definition}[]
\newtheorem{example}{Example}[]
\theoremstyle{plain}
\def \R{\mathbb{R}}
\def \C{C}
\DeclareMathOperator*{\tr}{tr}
\DeclareMathOperator*{\esssup}{ess\,sup}
\DeclareMathOperator*{\lip}{Lip}
\title[ Singular control problem and related Skorokhod problem]{Multidimensional singular control and related Skorokhod problem: sufficient conditions for the characterization of optimal controls}  
\author[Dianetti]{Jodi Dianetti}
\author[Ferrari]{Giorgio Ferrari}
\keywords{}
\address{J.~Dianetti: Center for Mathematical Economics (IMW), Bielefeld University, Universit\"atsstrasse 25, 33615, Bielefeld, Germany}
\email{\href{mailto:giorgio.ferrari@uni-bielefeld.de}{jodi.dianetti@uni-bielefeld.de}}
\address{G.~Ferrari: Center for Mathematical Economics (IMW), Bielefeld University, Universit\"atsstrasse 25, 33615, Bielefeld, Germany}
\email{\href{mailto:giorgio.ferrari@uni-bielefeld.de}{giorgio.ferrari@uni-bielefeld.de}}
\date{\today}
\numberwithin{equation}{section}
\begin{document}
%\tableofcontents
\begin{abstract} We characterize the optimal control for a class of singular stochastic control problems
as the unique solution to a related Skorokhod reflection problem. The optimization
problems concern the minimization of a discounted cost over an infinite time-horizon
through a process of bounded variation affecting an It\^o-diffusion. The setting is
multidimensional, the drift of the state equation and the costs are convex, the volatility
matrix can be constant or linear in the state. Our result applies to a relevant class of
linear-quadratic models and it allows to construct the optimal control in degenerate and
non degenerate settings considered in the literature.
\end{abstract} 
\maketitle
\smallskip
{\textbf{Keywords}}: Dynkin games, reflected diffusion, singular stochastic control, Skorokhod problem, variational inequalities.  
  
\smallskip
{\textbf{AMS subject classification}}: 93E20, %(optimal stochastic control) 
60G17, 
%(Sample path properties, preso da \cite{boryc&kruk2016})
91A55, 
%(games timing)
49J40. 
%(Variational methods including variational inequalities). 
%\tableofcontents  

\section{Introduction} 
This paper considers the problem of characterizing optimal policies for singular stochastic control problems in multidimensional settings. 
More precisely,  we consider the problem of controlling, through a one-dimensional c\`adl\`ag (i.e., right-continuous with left limits) process $v$ with locally bounded variation, the first component of a multidimensional diffusion with initial condition $x$. 
Namely, the controller can affect a state process $X^{x;v}$ which evolves according to the equation   
\begin{equation}\label{eq introduction SDE}
dX_t^{x;v} = b(X_t^{x;v})dt + \sigma(X_t^{x;v}) dW_t + e_1dv_t,\  t\geq 0, \quad X_{0-}^{x;v}=x, 
\end{equation}
for a multidimensional Brownian motion $W$, a suitable convex Lipschitz function $b$, and a volatility matrix $\sigma$, which is either constant or linear in the state. 
The vector $e_1$ denotes the first element of the canonical basis of $\R^d$, for $d\geq1$, and the processes $X^{x;v}$ take values in $\R^d$. 
The aim of the controller is to minimize the expected discounted cost
\begin{equation}\label{eq introduction cost functional}
J(x;v):= \mathbb{E} \bigg[ \int_0^\infty e^{-\rho t} h(X_t^{x;v}) dt + \int_{[0,\infty)} e^{-\rho t} d|v|_{t}  \bigg], 
\end{equation}
for a given convex function $h$ and a suitable discount factor $\rho >0$.
Here, $|v|$ denotes the total variation of the process $v$. 
The value function $V$ of the problem is defined, at any given initial condition $x$, as the minimum of $J(x;v)$ over the choice of controls $v$. 
Also, a control $\bar{v}$ is said to be optimal for $x$ if $J(x;\bar{v})=V(x)$. Existence of optimal controls can be proved in very general frameworks using different probabilistic compactification methods (see, e.g., \cite{budhiraja.ross2006, cohen2021, Haussmann&Suo95, li&zitkovic2017, Menaldi&Taksar89}).
  
Natural questions that immediately arise are whether it is possible to characterize $V$, and how one should act on the system in order to obtain the minimal cost $V$. 
As a matter of fact, the Markovian nature of the problem, together with mild regularity and growth conditions on the problem's data, allows to employ the dynamic-programming approach. This leads to the  characterization of the value function as a solution (in a suitable sense) to the Hamilton-Jacobi-Bellman equation  \begin{equation}\label{eq introduction HJB}
\max \{ \rho V-b DV - \tr (\sigma \sigma^{\text{\tiny {$\top$}}} D^2 V)/2 -h , | V_{x_1}| - 1 \} = 0.  \end{equation}
This equation provides key insights on the way the controller should act on the system in order to minimize the cost functional \eqref{eq introduction cost functional}. 
Indeed, when $V$ is sufficiently regular, an application of It\^o's formula suggests that the controller should make the state process not leaving the set  $\mathcal{W}:=\{ |V_{x_1}| <1 \}$, usually referred to as the waiting region.
In fact, in many examples (see, e.g.,  \cite{angelis&ferrari&moriarty2019, guo&tomecek2009, koch&vargiolu2019, kruk2000, ma1992, soner&shreve1989, yang2014}, among others) it is possible to construct the optimal control as the solution to a related Skorokhod reflection problem; that is, the optimal control can be characterized as that process $\bar{v}$, with minimal total variation, which is able to keep the process $X^{x;\bar{v}}$ inside the closure of the waiting region $\mathcal{W}$, by reflecting it in a direction prescribed by the gradient of the value function.
However, in multidimensional settings, such a characterization often remains a conjecture (see the discussion in Chapter 6 in \cite{suo1994}, Remark 5.2 in \cite{Boetius&Kohlmann98}, and also \cite{chiarolla&haussmann1998, chiarolla&haussmann2000,    federico&ferrari&schuhmann2019, federico&ferrari&schuhumann2020b}), and many questions about the properties of optimal controls remain open, representing a strong limitation to the theory.

\subsection{The characterization problem} 
We now discuss more in detail the  problem of characterizing optimal controls. 
When the state process is one dimensional, optimal controls can be explicitly constructed as Skorokhod reflections in a general class of models in \cite{alvarez2001, davis&zervos1998, jack&johnson&zervos2008, jack&zervos2006,  ma1992, weerasinghe2005}, among others.   
Also, in the (non necessarily Markovian) one dimensional case, a similar characterization of optimal controls has been achieved in \cite{Bank05, bank&elkaroui2004, Bank&Riedel01}, without relying on the dynamic-programming approach.       
When the dimension of the problem becomes larger than one, the difficulty of characterizing optimal controls drastically increases. 
Indeed, classical results on the existence of solutions to the Skorokhod reflection problem in the multidimensional domain $\mathcal{W}$ require some regularity of the boundary of $\mathcal{W}$ and of the direction of reflection, which are, in most of the cases, unknown. 
When the value function $V$ is convex, this difficulty is overcome in some specific settings.
A celebrated example is presented in \cite{soner&shreve1989}, where the problem of controlling a two-dimensional Brownian motion with a two-dimensional process of bounded variation is considered. There, the authors show that the boundary of the waiting region (the so-called free boundary) is of class $C^2$, and they are therefore able to construct the optimal policy as a solution to the associated Skorokhod problem. 
{Similar results are obtained when controlling a multidimensional Brownian motion with a one dimensional control in \cite{shreve.soner.1990elliptic} (in the infinite time-horizon case) and in \cite{soner.shreve.elkaroui.1991parabolic} (for the finite time-horizon).} 
The problem of the characterization is also encountered in  \cite{chiarolla&haussmann1998, chiarolla&haussmann2000,   federico&ferrari&schuhmann2019, federico&ferrari&schuhumann2020b}, where the construction of the optimal control can be provided only by requiring additional properties on the boundary of the waiting region.
{Other examples are exhibited in \cite{davis&zervos1998}, in which the case of controlling a multidimensional Brownian motion with a multidimensional control is considered in the case of a radial running cost $h(x)=|x|^2$, and in \cite{williams.chow.menaldi.1994}, where $h$ is convex and the cost of pushing in different directions is additive in the direction of pushing.}  
We also refer to  \cite{koch&vargiolu2019}, where the construction of the optimal policy is provided in a two-dimensional context in which the drift is non-zero.
To the best of our knowledge, in the case of a convex $V$, the most general multidimensional setting in which this characterization is shown is presented in \cite{kruk2000}, and in its finite time-horizon counterpart   \cite{boryc&kruk2016}.
There, the problem of controlling a multidimensional Brownian motion with a multidimensional control is considered for a convex running cost.
Remarkably, in \cite{kruk2000} (and in \cite{boryc&kruk2016}) the author presents an approach which allows to construct the unique optimal policy as a solution to the related Skorokhod problem bypassing the problems related to the  regularity of the free boundary. 
In non-convex settings, the number of contributions are even rarer. 
The suitable regularity of the boundary of $\mathcal{W}$ is shown, in some particular two-dimensional settings, in \cite{guo&tomecek2009} and in \cite{angelis&ferrari&moriarty2019},  while a multidimensional case is considered in \cite{yang2014}, via a connection with Dynkin games. 
We also mention that the construction of  multidimensional reflected diffusions in polyhedral domains has been recently studied in \cite{Cont.Guo.Xu2020, Guo&Tang&Xu18, Guo&Xu18}, in the context of games with singular controls.
To the best of our knowledge, in the general multidimensional case there is no result on the characterization of the optimal control even in the classical \emph{linear-quadratic} setting (i.e., when $b$ and $\sigma$ are affine and $h$ is quadratic). 
This underlines the lack of theoretical understanding of fundamental properties of the optimally controlled process in the main benchmark models.
To conclude, despite many decades of research in the field, the nature of optimal controls is, in general, far from being completely understood, and this motivates our study. 

\subsection{Our result and methodology.} In this paper, we provide sufficient conditions for the characterization of the optimal policy of the singular control problem specified by \eqref{eq introduction SDE} and \eqref{eq introduction cost functional} as the solution to the related Skorokhod reflection problem. 
Despite in our setting the control is one dimensional, the multidimensional nature of the problem arises from the fact that the components of the state process are interconnected; in particular, the action of the controller on the first component of the state process can affect all the other components. 
We will show the claimed characterization under two main classes of assumptions in which the volatility matrix is constant or linearly dependent on the state.
In both cases, additional monotonicity assumptions are enforced on the running cost $h$ and on the drift $b$. 
These structural conditions are satisfied in a relevant class of linear-quadratic models (see Example \ref{example key} below), and in some specific settings considered in the literature (see \cite{chiarolla&haussmann1998, chiarolla&haussmann2000, federico&ferrari&schuhmann2019, federico&ferrari&schuhumann2020b}), for which the problem of constructing the optimal control remained partially open (see the examples in Subsections \ref{subsection examples} and \ref{subsection example Patrick}).   
Indeed, though in principle these hypotheses narrow the applicability of our result, it is important to underline that similar conditions (and setup) are often in place in the very few multidimensional settings in which an analysis of the free boundary is actually provided (see \cite{chiarolla&haussmann1998, chiarolla&haussmann2000, federico&ferrari&schuhmann2019, federico&ferrari&schuhumann2020b, federico.pham2014}). 
%Though a more general characterization result is certainly desirable, such a result would not met  

The main novelty of our approach is to investigate the characterization problem via a proper monotonicity of $V_{x_1}$. 
This is done first by identifying $V_{x_1}$ as the value of a related Dynkin game (through a variational formulation in the spirit of \cite{chiarolla&haussmann2000}), and then by exploiting our structural conditions as well as a comparison principle for SDEs. 
The monotonicity of $V_{x_1}$ allows, via a thorough analysis of the signs of the derivatives of $V$, to adapt to our setting some arguments in \cite{kruk2000} in order to 
construct solutions $\bar{v}^\varepsilon$ to a family of Skorokhod problems in domains $\mathcal{W}_\varepsilon$ \emph{approximating} $\mathcal{W}$. 
The controls  $\bar{v}^\varepsilon$ are $\varepsilon$-optimal for \eqref{eq introduction cost functional} (i.e.\ $ J(x;\bar{v}^\varepsilon) \leq V(x) + \varepsilon$) and converge to the optimal control $\bar{v}$ as $\varepsilon \to 0$.
Finally, the properties of $\bar{v}^\varepsilon$ allows to prove that $\bar{v}$ solves the Skorokhod problem on the original domain $\mathcal{W}$. %This then provides the desired characterization of the optimal policy $\bar{v}$.

As a consequence of our result, some works (in particular \cite{chiarolla&haussmann2000} and \cite{yang2014}) in the literature on singular control can be revisited, and the characterization of optimal controls can be provided under slightly different assumptions. 
Also, our approach allows to treat the singular control problems with degenerate diffusion matrix studied in \cite{ federico&ferrari&schuhmann2019, federico&ferrari&schuhumann2020b}. 
The results apply  to problems with monotone controls, and to the case in which increasing the underlying diffusion has a different cost than decreasing it. 
The approach presented in this paper seems to be suitable to treat also singular control problems in the finite time-horizon.  

\subsection{Related literature}
Closely related to our result is the analysis presented in \cite{kruk2000}.  
In comparison to this work, our approach seems to work only when the control is one-dimensional, but it allows to treat problems in which the components of the underlying state process are interconnected. 
Therefore, differently from \cite{kruk2000}, our result can be (almost) directly employed to construct the optimal control in  \cite{chiarolla&haussmann2000, federico&ferrari&schuhmann2019, federico&ferrari&schuhumann2020b, yang2014} (see Section \ref{section examples} below).
Also, our setting and some methodologies are similar to  those in \cite{chiarolla&haussmann2000}, where the main aim is to provide a study of the variational inequality \eqref{eq introduction HJB} through a connection with Dynkin games.
The problem of characterizing the optimal control is also discussed therein, though such a characterization is only provided under a (strong) additional assumption on the regularity of the free boundary. 
Despite similarities in the setup, the focus of our paper is instead on the characterization of optimal controls, and our approach allows to construct such a control under an easily verifiable condition (see Example \ref{example chiarolla haussmann} below).

{Clearly, our results relate to  stochastic differential equations (SDEs, in short) with reflecting boundary conditions, also known as Skorokhod reflection problems for SDEs.} 
In this field, existence and uniqueness of strong solutions to reflected SDEs in convex time-independent domains with normal reflection was first shown in the seminal 
\cite{tanaka1979}. 
These results were then generalized to non-convex smooth domains with smooth oblique reflection in \cite{lions&sznitman1984}, and subsequently refined in \cite{saisho1987}.  
Existence of strong solutions in a class of non-smooth domains has been proved in
\cite{dupuis&ishii1993}, and therefore generalized to the time-dependent case in \cite{lundstrom&onskog2019}. This list is, however, far from being exhaustive, and we therefore refer the interested reader to 
\cite{burdzy2004,burdzy&ramanan2009, costantini1992,costantini2006, nystrom&onskog2010,taksar1992}  
and to the references therein.  
A crucial point to be observed is that all of the previous results require some regularity of the boundary and of the direction of reflection. 
In the Skorokhod problem related to singular control, the boundary and the direction of reflection are implicitly given through the optimization problem, and their regularity is, in general, unknown. 
Therefore, to the best of our knowledge, none of the previous papers can be directly employed in order to construct an optimal policy for the general multidimensional singular control problem with cost functional \eqref{eq introduction cost functional}. 
From the reflected SDEs-perspective, our results provide existence and uniqueness of a (strong) solution to a Skorokhod problem in which the domain is given by the noncoincidence set $\mathcal{W}$ of the solution of the variational inequality with gradient constraint \eqref{eq introduction HJB}, and in which the reflection direction is prescribed by its gradient.   
 
An essential tool for our analysis is the connection between optimal stopping and singular stochastic control theory. 
This connection is known since the seminal  \cite{Bather&Chernoff67}, where the authors observed that the derivative of the value function of a singular control problem identifies with the value of an optimal stopping problem.  
Since then, this connections has been elaborated through different approaches (see \cite{benth&reikvam2004,Boetius&Kohlmann98,Karatzas&Shreve84}, among others), until the more recent interpretation given in \cite{li&zitkovic2017}.  
When the control is assumed to be of locally bounded variation, and the system has  dynamics with independent components, with one of them being controlled, the space derivative of the value function of the control problem coincides with the value of a zero-sum game of stopping; i.e., a Dynkin game (cf.\   \cite{boetius2005,chiarolla&haussmann1998, chiarolla&haussmann2000,   guo&tomecek2009,  karatzas&wang2001}).  
This connection was described in a multi-dimensional setting with interconnected dynamics
in \cite{chiarolla&haussmann2000} and \cite{chiarolla&haussmann1998} by employing a variational formulation of the problem. 
%Independently, a similar connection was proved, mainly from the probabilistic point of view, in \cite{karatzas&wang2001}, and then rielaboreted in \cite{boetius2005}. 
In this paper, we employ essentially the formulation and the techniques elaborated in \cite{chiarolla&haussmann2000}, however extending their arguments to fit our convex setting. 
%An important aspect to underline is that these connections are known only when the control is assumed to be of dimension one. 
 
\subsection{Outline of the paper}
The rest of this paper is organised as follows. 
In Section \ref{section Statement of the main result} we formulate the problem, we enforce some structural conditions, and we state the main result of this paper. 
The proof of the main result for a constant volatility is presented in Section \ref{section proof sigma constant}, while the proof for a linear volatility is discussed in Section  \ref{section proof main result  geometric}.
Extensions and examples are provided in Section \ref{section examples}, while Appendix \ref{appendix auxiliary results} and Appendix \ref{appendix proof propositions kruk} are devoted to some auxiliary technical results.

\subsection{Notation} 
For $d \in \mathbb{N}$ with $d\geq 1$, an open set $B \subset \mathbb{R}^d$, $\alpha=(\alpha_1,...,\alpha_{d})\in \mathbb{N}^{d}$ and a function $f:B \to \mathbb{R}$, we denote by $D^\alpha f:= D_1^{\alpha_1} ... D_{d}^{\alpha_{d}} f$ the weak derivative of $f$, where $D_i f:=f_{x_i} := \partial f / \partial x_i$, and we set $|\alpha|:=\alpha_1 + ... +\alpha_d$. For $\ell \in \mathbb{N}$, $q\in [1,\infty]$, and a measure space $(E, \mathcal{E}, m)$, we define the spaces: 
\begin{itemize}
\item $L^q(E):=\{ \text{measurable } f:E\to \mathbb{R} \text{ s.t. } \| f\|_{L^q(E)} < \infty \}$, where $\| f\|_{L^q(E)}^q:=\int_E |f|^q dm$ if $q <\infty$, and $\| f\|_{L^\infty(E)}:=\esssup_E f$ for $q=\infty$;
\item ${L}_{loc}^{q}(B):=\{ f \,|\, f \in  {L}^{q}(D) \text{ for each bounded open set } D\subset B \}$;
\item $C^{\ell }(B):=\{f:B\to \R \text{ with continuous $\ell$-order derivatives} \}$  and \\  $C_c^\infty(B):=\{f:B \to \mathbb{R} \text{ with} \text{ compact support, s.t.\ }f\in C^{\ell}(B) \text{ for each } \ell \in \mathbb{N} \}$;
    \item $C^{\ell ; 1}(B):=\{f:B\to \R \text{ s.t. $\|f\|_{C^{\ell;1}{(B)}}< \infty$} \}$, where $\|f \|_{C^0{(B)}}:=\sup_{x \in B}|f(x)|$,  $\|f\|_{\lip{(B)}}:= \sup_{x,y \in B}|f(y)-f(x)|/|y-x|$, and
    $\|f\|_{C^{\ell;1}{(B)}}:=\sum_{|\alpha| \leq \ell}\| D^\alpha f \|_{C^0(B)} + \sum_{|\alpha|= \ell } \|D^\alpha f \|_{\lip{(B)}};$
    \item $C^{\ell ; 1}_{loc}(B):=\{ f \,|\, f \in  C^{\ell ; 1}(D) \text{ for each bounded open set } D\subset B \}$; 
    \item ${W}^{\ell;q}(B):=\{f \in L^q(B) \text{ with $\| f\|_{{W}^{\ell;q}(B)}< \infty$}\}$, \\ ${W}_{loc}^{\ell;q}(B):=\{ f \,|\, f \in  {W}^{\ell;q}(D) \text{ for each bounded open set } D\subset B \}$, and ${W}_{0}^{\ell;q}(B)$ as the closure of $C_c^\infty(B)$ in the norm $\| \cdot \|_{{W}^{\ell;q}(B)}$, where $\| f\|_{{W}^{\ell;q}(B)}:= \sum_{|\alpha| \leq \ell} \| D^{\alpha} f\|_{L^q(B)}$.
\end{itemize} 
For $x \in \mathbb{R}^d$ we denote by $x^{\text{\tiny {$\top$}}}$ the transpose of $x$. 
For $x,y \in \R^d$, we denote by $xy$ the scalar product in $\R^d$, as well as by $|\cdot|$ the Euclidean norm in $\R^d$. 
{Moreover, we set $x\leq y$ if $x_i \leq y_i$ for any $i=1,...,d$.} 
The vector $e_i\in \R^d$ indicates the $i$-th element of the canonical basis of $\mathbb{R}^d$ and, for $x\in \mathbb{R}^{d}$ and $R>0$, set $B_R(x):=\{ y \in \mathbb{R}^{d} \, | \, |y-x|<R \}$.  
Finally, in this paper $C$ indicates a generic positive constant, which may change from line to line.   

\section{Problem formulation and main result}\label{section Statement of the main result} 
\subsection{Singular control and Skorokhod problem}
\label{subsection Problem formulation and assumptions} 
Fix $d \in \mathbb{N}$, $d\geq 2$, and 
a $d$-dimensional Brownian motion $W=(W^1,...,W^d)$
on a filtered probability space $(\Omega, \mathcal{F},  \mathbb{F},\mathbb{P})$ satisfying the usual conditions.  
For each $x=(x_1,...,x_d) \in \mathbb{R}^{d}$, let the process $X^{x}=(X^{1,x},...,X^{d,x})$ denote the solution to the stochastic differential equation (SDE, in short) 
\begin{equation}
\label{SDE dynamics uncontrolled}
    \begin{cases}
        dX_t^{1,x}=(a_1 +b_1^1 X_t^{1,x})dt + \bar{\sigma}(X_t^{1,x}) dW_{t}^1, &\quad t \geq 0,\quad X_{0-}^{1,x}=x_1, \\ 
        dX_t^{i,x}=b^i(X_t^{1,x},X_t^{i,x})dt + \bar{\sigma}(X_t^{i,x}) dW_{t}^i, & \quad t \geq 0,\quad X_{0-}^{i,x}=x_i, \quad i=2,...,d. 
    \end{cases}     
\end{equation} 
Here  $a_1, b_1^1 $ are constants, while the coefficients $b^i \in C(\R^2)$ and $\bar{\sigma} \in C(\R)$ are deterministic Lipschitz continuous functions. The drift $\bar{b}(x):=(a_1 +b_1^1x_1, b^2(x_1,x_2),..,b^d (x_1,x_d))^{\text{\tiny {$\top$}}}$ and the function $\bar{\sigma}$ satisfy Assumption \ref{assumption} below. 
Next, introduce the set of \emph{admissible controls} as
\begin{equation*}  
\mathcal{V} : = \left\{ \text{$\mathbb{R}$-valued $\mathbb{F}$-adapted and c\`adl\`ag processes with locally bounded variation}  \right\},
\end{equation*} 
and, for each $v\in \mathcal{V}$ and $x\in \mathbb{R}^{d}$, let the process $X^{x;v}=(X^{1,x;v},...,X^{d,x;v})$ denote the unique strong solution to the  controlled stochastic differential equation
\begin{equation}
\label{SDE dynamics}
    \begin{cases}
        dX_t^{1,x;v}=(a_1 +b_1^1 X_t^{1,x;v})dt + \bar{\sigma}(X_t^{1,x;v}) dW_{t}^1 +dv_t, & t \geq 0,\ X_{0-}^{1,x;v}=x_1, \\ 
        dX_t^{i,x;v}=b^i(X_t^{1,x;v},X_t^{i,x;v})dt + \bar{\sigma}(X_t^{i,x;v}) dW_{t}^i, &  t \geq 0,\ X_{0-}^{i,x;v}=x_i, \ i=2,...,d. 
    \end{cases}  
\end{equation}   

For any given initial condition $x\in \mathbb{R}^{d}$, consider the problem of minimizing the expected discounted cost
\begin{equation}\label{eq definition J}
J(x;v):= \mathbb{E} \bigg[ \int_0^\infty e^{-\rho t} h(X_t^{x;v}) dt + \int_{[0,\infty)} e^{-\rho t} d|v|_{t}  \bigg], \quad v \in \mathcal{V}, 
\end{equation}
where $|v|$ denotes the total variation of the process $v$, $h: \mathbb{R}^{d} \to \mathbb{R}$ is a continuous function, and $\rho>0$ is a constant discount factor.
%For $x\in \mathbb{R}^{d}$, w
We will say that the control $\bar{v} \in \mathcal{V}$ is optimal if 
\begin{equation}\label{eq definition V}
V(x):= \inf_{v \in \mathcal{V}} J(x;v) = J(x;\bar{v}),  
\end{equation}  
and, in the following, we will refer to the function $V$ as to the value function of the problem, and to $X^{x;\bar{v}}$ as to the optimal trajectory. 

The second integral appearing in \eqref{eq definition J} has to be understood in the Lebesgue-Stieltjes sense, and it is defined as
$$
 \int_{[0,\infty)} e^{-\rho t} d|v|_{t}:= |v|_0 + \int_0^\infty e^{-\rho t} d|v|_{t},
$$
in order to take into account possible jumps of the control at time zero.
Moreover, for $v\in \mathcal{V}$ we will often write $dv=\gamma d|v|$ to denote the disintegration
$$
v_t = \int_0^t \gamma_s d|v|_s, \quad \text{for each }t\geq 0, \ \mathbb{P}\text{-a.s.,}
$$
where $|v|$ denotes the total variation of the signed measure $v$, and the process $\gamma$ is the Radon-Nikodym derivative of the signed measure $v$ with respect to $|v|$.  Also, for a control $v$, the nondecreasing c\`adl\`ag processes $\xi^+, \, \xi^-$ will denote the minimal decomposition of the signed measure $v$; that is, $v=\xi^+ - \xi^-$, and $\xi^+ \leq \bar{\xi}^+$ and $\xi^- \leq \bar{\xi}^-$ for any other couple of nondecreasing c\`adl\`ag processes $\bar{\xi}^+, \, \bar{\xi}^-$ which satisfy  $v=\bar{\xi}^+ - \bar{\xi}^-$.

Finally, recall from \cite{kruk2000} the following notion of solution to the Skorokhod problem, which we adapt to our setting.  
\begin{definition} \label{def Skorokhod problem}
Let $\mathcal{O}$ be an open subset of $\mathbb{R}^{d}$ with closure $\overline{\mathcal{O}}$,  $ {x} \in \overline{\mathcal{O}}$, and set $S:=\partial \mathcal{O}$. Let $\bar{\nu}$ be a continuous vector field on $S$, with $\bar{\nu}=e_1 \nu$ and $|\nu (y)|=1$ for each $y\in S$. 

We say that the process ${v} \in \mathcal{V}$ is a solution to the modified Skorokhod problem for the SDE \eqref{SDE dynamics} in $\overline{\mathcal{O}}$ starting at ${x}$ with reflection direction $\bar{\nu}$ if
\begin{enumerate} 
    \item $\mathbb{P}[X_t^{{x};{v}} \in \overline{\mathcal{O}}, \, \forall t\geq 0 ]=1$;
    \item $\mathbb{P}$-a.s., for each $t\geq 0$ one has $d{v}={\gamma}d{|v|}$, with  
    $${|v|}_t=\int_0^t \mathds{1}_{ \{  X_{s-}^{{x};{v}} \in S, \, \nu (X_{s-}^{{x};{v}}) = {\gamma}_s \} } d {| v | }_s;  $$
    \item  $\mathbb{P}$-a.s., for each $t \geq 0$, a possible jump of the process $X^{{x};{v}}$ at time $t$ occurs on some interval $I\subset S$ parallel to the vector field $\bar{\nu}$; i.e., such that $\bar{\nu}(y)$ is parallel to $I$ for each $y\in I$. If $X^{{x};{v}}$ encounters such an interval $I$, it instantaneously jumps to its endpoint in the direction $\bar{\nu}$ on $I$. 
\end{enumerate}

Moreover, if ${v}$ is continuous
%(that is, if $S$ does not contain any interval $I$ parallel to $\bar{\nu}$)
, then we say that ${v}$ is a solution to the (classical) Skorokhod problem for the SDE \eqref{SDE dynamics} in $\overline{\mathcal{O}}$ starting at ${x}$ with reflection direction $\bar{\nu}$.
\end{definition}

\subsection{Assumptions and main result}\label{subsection main result}
The main objective of this paper is to characterize optimal control policies for Problem \eqref{eq definition V} as solutions to related Skorokhod problems.  

We will prove our main result under the following structural conditions, which we enforce throughout the rest of this paper. We postpone the discussion of some generalizations to Section \ref{section examples}.
\begin{assumption}
\label{assumption} For $p\geq 2$ we have:\
\begin{enumerate}
\item \label{assumption on h} The running cost $h$ is $C_{loc}^{2;1}(\mathbb{R}^{d})$, convex, and, for suitable  constants $K, \kappa_1, \kappa_2>0$, it satisfies, for each $x,y\in \mathbb{R}^{d}$  and for all $\lambda \in [0,1]$, the conditions
    \begin{align*}
        \kappa_1|x_1|^{{p}} - \kappa_2 \leq h(x) & \leq K (1+|x|^p),
        \\       
        |h(y)-h(x)| &\leq K (1+|x|^{p-1}+|y|^{p-1})|y-x|,  \\
        \lambda h(x) +(1-\lambda) h(y) -h( \lambda x + (1-\lambda) y) & \leq K \lambda(1-\lambda)(1+|x|^{p-2}+|y|^{p-2})|x-y|^2, \\
        0 & < h_{x_1 x_1}(x).   
    \end{align*}    
\item\label{assumption on b} There exists a constant  $\bar{L} \geq 0$ such that, for each $x, y \in \R^d$, we have 
\begin{align*}
    |\bar{b}(x)| & \leq \bar{L}(1+|x|), \\
    |\bar{b}(y)-\bar{b}(x)| & \leq \bar{L}|y-x|.  
\end{align*}  
The functions  $b^{i}$ are convex of class  $C_{loc}^{2;1}(\R^d)$. Furthermore, we assume that $h_{x_i} \geq 0$ and $ {b}_{x_1 }^{i},\, {b}_{x_1 x_{i}}^{i},\, h_{x_1 x_{i}} \leq 0 $  for each $i=2,...,d$, and that $D\bar{b}$ is globally Lipschitz.
\item\label{ass discount factor} For $\rho^*:={{p}(2 p -1 )}$ and a constant $\sigma>0$, either of the two conditions below is satisfied: 
\begin{enumerate} 
    \item\label{ass sigma constant} $\bar{\sigma}(y)=\sigma, \ y \in \R$, and the discount factor satisfies the relation  $ {\rho} > {3 \rho^*} \bar{L} $.
     \item\label{ass sigma geometric}  $\bar{\sigma}(y)=\sigma y, \ y \in \R$, and the discount factor satisfies the relation $ {\rho} > {2 \rho^*} ( \bar{L}   +\sigma^2(\rho^*-1))$. 
     In this case, we also assume that there exists $x_1^*>0$ such that $h_{x_1}(x) \leq \min \{0,-b_1^1\}$ for each $x$ with $x_1 < 2 x_1^*$, that $b^i(x_1,x_i)\geq0$ for $x_1, x_i \geq0$ for each $i=2,...,d$, and that $a_1 \geq 0$.   
\end{enumerate} 
\end{enumerate}  
\end{assumption} 
  
Natural examples in which the conditions above are satisfied are given --after discussing  generalizations of Assumption \ref{assumption}-- in Section \ref{section examples}. These include a relevant class of  \emph{linear-quadratic}  singular stochastic control problems (see Example \ref{example key} and Subsection \ref{subsection example Patrick} below).
Notice that the nature of  problem \eqref{eq definition V} is genuinely multidimensional, as the  components of the dynamics \eqref{SDE dynamics} are interconnected. 

\begin{remark}[On the role of Assumption \ref{assumption}]\label{remark role of assumption} We underline that the particular choice of $p \geq 2$ is motivated by  quadratic running costs (cf.\ Example \ref{example key} in Section \ref{section examples}).  
From Condition \ref{assumption on b} one can see that quite strong requirements are needed in order to treat models with a general $b^i$. However, when $b^i$ has a simpler form, some conditions on the derivatives ${b}_{x_1 }^{i},\, {b}_{x_1 x_{i}}^{i},\, h_{x_{i}},\, h_{x_1 x_{i}}$ can be weakened (see Subsections \ref{subsection Affine drift} and \ref{subsection On Condition 2}).
Also, the assumption on $h_{x_1}$ in Condition \ref{ass sigma geometric} is to enforce that the optimal trajectories  live in the set  $\mathbb{R}_+^d:=\{ x\in \R^d |\, x_i>0, \ i=1,...,d \}$, whenever the initial condition $x \in \mathbb{R}_+^d$ (cf.\ Lemma \ref{lemma geometric X>0} below). 
This condition is a natural substitute, for minimization problems in dimension $d \geq 2$, of the classical Inada condition at 0 (see, e.g., equation (2.5) in \cite{guo.pham2005}). 
The latter, is typically assumed in profit maximization problems, and it is satisfied by Cobb-Douglas production functions. 
Finally, the conditions on the discount factor $\rho$ are in place in order to ensure a suitable ``integrability" of the optimal trajectories, which allows to prove some semiconcavity estimates for the value function $V$ (see steps 2 and 3 in the proof of Theorem \ref{theorem V sol HJB} in Appendix \ref{appendix auxiliary results}).
\end{remark}
Observe that, when Condition \ref{ass sigma constant} is in place, some controlled trajectories can reach the whole space with probability $\mathbb{P}>0$. 
On the other hand, under Condition \ref{ass sigma geometric}, as mentioned in Remark \ref{remark role of assumption}, the natural domain for an optimally-controlled trajectory  is $\mathbb{R}_+^d$. 
This suggests to define a domain $D$ in the following way 
\begin{equation}\label{eq definition domain} 
D:=\mathbb{R}^d   \text{ if Condition \ref{ass sigma constant} holds}, \quad  D:=\mathbb{R}_+^d   \text{ if Condition \ref{ass sigma geometric} holds.} 
\end{equation}
Indeed, the value function $V$ is finite and it is a  convex solution in $W_{loc}^{2;\infty}(D)$ of the  Hamilton-Jacobi-Bellman (HJB, in short) equation \begin{equation} 
\label{HJB v.i.} 
\max \{ \rho V-\mathcal{L} V -h , | V_{x_1}| - 1 \} = 0,  \quad \text{a.e.\ in } D, 
\end{equation}
where 
$
\mathcal{L}V(x):= \bar{b}(x) DV(x) + \frac{1}{2} \sum_{i=1}^d  \bar{\sigma}^2(x_i) V_{x_i x_i}(x), \ x \in D, 
$
is the generator of the uncontrolled SDE  \eqref{SDE dynamics uncontrolled}.
For completeness, a proof of this result is provided in Appendix \ref{appendix auxiliary results} (see Theorem \ref{theorem V sol HJB}). During the proof of Theorem \ref{theorem V sol HJB}, the convergence of a certain penalization method is studied: This convergence will be a useful tool in many of the proofs in this paper. 

Define next the \emph{waiting region} $\mathcal{W}$ as  
\begin{equation}
\label{waiting region definition}
\mathcal{W}:= \{ x \in D \,|\,  |V_{x_1}(x)| < 1 \},
\end{equation}
 and notice that, by the $W_{loc}^{2;\infty}$-regularity of $V$, $\mathcal{W}$ is an open subset of $D$. Also, for each $z\in \mathbb{R}^{d-1}$, we define the sets $$
D_1(z):= \{ y\in \mathbb{R}\, |\, (y,z) \in D \}  \quad \text{and} \quad \mathcal{W}_1(z):= \{ y\in \mathbb{R}\, |\, (y,z) \in \mathcal{W} \}.
 $$ 
 In the sequel, the closure of $\mathcal{W}$ (resp.\ $\mathcal{W}_1 (z)$) in $D$ (resp.\ $D_1(z)$)   will be denoted by $\overline{\mathcal{W}}$ (resp.\ $\overline{\mathcal{W}}_1 (z)$). We state here a technical lemma, whose proof is given in Appendix \ref{appendix proof propositions kruk}. 
 \begin{lemma}\label{lemma waiting region nonempty}
For any ${x}=({x}_1, {z}) \in D$, with ${z} \in \mathbb{R}^{d-1}$, the set $\mathcal{W}_1(z)$ is a nonempty open interval; in particular, $\mathcal{W}$ is nonempty.  
 \end{lemma}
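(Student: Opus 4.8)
The plan is to show first that $\mathcal{W}_1(z)$ is \emph{nonempty}, and then that it is an \emph{open interval}, exploiting convexity of $V$. For the first part, recall that $V$ solves the HJB equation \eqref{HJB v.i.} a.e.\ in $D$, and $|V_{x_1}|\le 1$ everywhere (by the gradient constraint, since $V\in W^{2;\infty}_{loc}(D)$). Suppose, for contradiction, that $\mathcal{W}_1(z)=\emptyset$ for some $z$; then $|V_{x_1}(y,z)|=1$ for a.e.\ $y\in D_1(z)$. By the convexity of $V$ in the $x_1$-variable (so $V_{x_1}(\cdot,z)$ is nondecreasing) and continuity of $V_{x_1}$, this forces $V_{x_1}(\cdot,z)$ to be constantly $+1$ or constantly $-1$ on $D_1(z)$. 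I would then rule this out by a direct comparison of costs: if, say, $V_{x_1}\equiv 1$ on a whole interval, then along that interval $V$ grows linearly with slope $1$ in $x_1$, i.e.\ $V(y+\delta,z)=V(y,z)+\delta$; but applying the control that instantaneously pushes $x_1$ down by $\delta$ costs exactly $\delta$ in the variation term, which is a contradiction unless the running-cost contribution is exactly offset — and the strict convexity assumption $h_{x_1x_1}>0$, together with the growth condition $\kappa_1|x_1|^p-\kappa_2\le h$, makes moving in from $\pm\infty$ strictly beneficial, contradicting $|V_{x_1}|\equiv 1$ on an unbounded half-line. Concretely, I expect to use that $V(\cdot,z)$ is convex and, by the lower bound on $h$ and the finiteness of $V$, superlinear growth of $V$ is impossible with slope bounded by $1$; hence $V_{x_1}$ cannot be identically $\pm1$, so $\{|V_{x_1}(\cdot,z)|<1\}$ is nonempty.

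For the second part, convexity of $V$ in $x_1$ gives that $y\mapsto V_{x_1}(y,z)$ is nondecreasing on the interval $D_1(z)$. Therefore the superlevel set where $V_{x_1}\ge -1$ and the sublevel set where $V_{x_1}\le 1$ are each "one-sided" (of the form $(\alpha,\infty)\cap D_1(z)$ and $(-\infty,\beta)\cap D_1(z)$ respectively, up to endpoints), and their intersection $\mathcal{W}_1(z)=\{\,-1<V_{x_1}(\cdot,z)<1\,\}$ is an interval. Its openness follows from continuity of $V_{x_1}$ (a consequence of $V\in W^{2;\infty}_{loc}$, so $V_{x_1}$ has a continuous representative). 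Finally, $\mathcal{W}=\bigcup_{z}\big(\mathcal{W}_1(z)\times\{z\}\big)$ contains $\mathcal{W}_1(z)\times\{z\}\ne\emptyset$ for at least one $z$, so $\mathcal{W}\ne\emptyset$; this also matches the earlier observation that $\mathcal{W}$ is open in $D$.

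The main obstacle I anticipate is the nonemptiness argument: ruling out $V_{x_1}\equiv 1$ (or $\equiv -1$) on all of $D_1(z)$ rigorously. A clean way is the following. Fix $z$ and pick any admissible control and any two initial points $(y,z)$, $(y',z)$ with $y'<y$ in $D_1(z)$; from the definition \eqref{eq definition J} one gets $V(y,z)\le V(y',z)+(y-y')$ by first pushing down to $y'$ at cost $y-y'$ and then following an optimal control from $(y',z)$ — here one must be a little careful because in the geometric case $D=\mathbb{R}_+^d$ and one can only push down while staying positive, but the strict inequality is what we need to reach. Symmetrically $V(y',z)\le V(y,z)+(y-y')$, so $V$ is $1$-Lipschitz in $x_1$ (consistent with $|V_{x_1}|\le1$); the point is that if equality held throughout we would have $V(y,z)-V(y',z)=y-y'$ for all $y'<y$, meaning the control "move down then wait optimally" is optimal starting from $(y,z)$ for arbitrarily large $y$ — but by the coercivity $h(x)\ge\kappa_1|x_1|^p-\kappa_2$ with $p\ge2$, for large $y$ this strategy incurs a running cost of order $|y|^p$ during the (positive) time it takes to bring $X^1$ down, which cannot be compensated by a first-order (in $y$) reduction of the variation cost. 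This yields the contradiction and hence $\mathcal{W}_1(z)\ne\emptyset$. I would present this comparison estimate carefully, since it is the crux, and then the interval/openness statements are immediate from convexity and the $W^{2;\infty}_{loc}$-regularity of $V$.
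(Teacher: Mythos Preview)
Your argument that $\mathcal{W}_1(z)$ is an open interval is correct and matches the paper: convexity of $V$ makes $V_{x_1}(\cdot,z)$ nondecreasing, and continuity of $V_{x_1}$ gives openness.

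The nonemptiness argument, however, has a genuine gap. Your proposed contradiction---that if $V_{x_1}(\cdot,z)\equiv 1$ then ``move down then wait optimally'' is optimal but ``incurs a running cost of order $|y|^p$ during the (positive) time it takes to bring $X^1$ down''---fails because singular controls allow \emph{instantaneous} jumps: pushing $x_1$ from $y$ down to $y'$ costs exactly $y-y'$ in variation and zero running cost, so there is nothing to ``not be compensated.'' Likewise, the sentence ``superlinear growth of $V$ is impossible with slope bounded by $1$'' does not yield a contradiction: $V$ is indeed at most linear in $x_1$ (it is $1$-Lipschitz), and the coercivity of $h$ does \emph{not} force superlinear growth of $V$, precisely because one can always jump. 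The correct (and much simpler) argument under Condition~\ref{ass sigma constant} is: $h\ge -\kappa_2$ implies $V\ge -\kappa_2/\rho$ uniformly; if $V_{x_1}(\cdot,z)\equiv 1$ on $D_1(z)=\mathbb{R}$, then $V(y,z)=V(0,z)+y\to -\infty$ as $y\to -\infty$, contradicting the lower bound. The case $V_{x_1}\equiv -1$ is symmetric.

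More seriously, under Condition~\ref{ass sigma geometric} you do not address the case $V_{x_1}(\cdot,z)\equiv 1$ at all (you only note ``one must be a little careful''). Here $D_1(z)=(0,\infty)$ is bounded below, so the affine-goes-to-$-\infty$ argument does not apply in that direction. The paper's proof of this case is substantially more involved: it uses an It\^o-formula argument (as in Proposition~\ref{proposition the cont parts acts on the boundary}) to show that any initial jump of the optimal control must be in the direction $-V_{x_1}=-1$, i.e.\ negative; then invokes the reasoning behind Lemma~\ref{lemma geometric X>0} (which relies on the structural hypothesis $h_{x_1}\le 0$ for $x_1<2x_1^*$) to conclude that the optimal control started from $x_1<x_1^*$ has no jump at time zero; and finally derives a contradiction with \emph{uniqueness} of the optimal control by comparing two nearby initial points. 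None of this is suggested in your proposal, and the strict convexity $h_{x_1x_1}>0$ alone does not supply it.
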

 
 \begin{remark}[Existence and uniqueness of optimal controls]\label{lemma appendix existence optimal controls}  Under Assumption \ref{assumption}, for each $\bar{x} \in D$ there exists a unique optimal control $\bar{v} \in \mathcal{V}$. 
This is a classical result when the drift is affine. 
In the case of a convex drift, it essentially follows from the convexity of $J$ w.r.t.\ $(x,v)$.   
The latter in turn follows from the convexity of the drift, the monotonicity of $h$, and a comparison principle for SDEs ({see Step 1 in the proof of Theorem \ref{theorem V sol HJB} in Appendix \ref{appendix auxiliary results} or} Lemma 3.1 in \cite{Boetius&Kohlmann98}). The argument can be recovered from the proof of Lemma \ref{lemma construction optimal policy} below, which works for any sequence of controls minimizing the cost functional $J$. 
Finally, the uniqueness of the optimal control is a consequence of the strict convexity of $h$ in the variable $x_1$ {(see Step 1 in the proof of Theorem \ref{theorem V sol HJB} in Appendix \ref{appendix auxiliary results})}.
\end{remark}
 
The following is the main result of our paper, characterizing the optimal policies in terms of the waiting region $\mathcal{W}$ and the derivative $V_{x_1}$ in the sense of Definition \ref{def Skorokhod problem}.   
\begin{theorem}
\label{theorem main characterization}
Let $\bar{x}=(\bar{x}_1, \bar{z}) \in D$, with $\bar{z} \in \mathbb{R}^{d-1}$. The following statements hold true: 
\begin{enumerate}  
    \item\label{theorem x inside waiting} If $\bar{x} \in \overline{\mathcal{W}}$, then the optimal control $\bar{v} $ is the unique solution to the modified Skorokhod problem for the SDE \eqref{SDE dynamics} in $\overline{\mathcal{W}}$ starting at $\bar{x}$ with reflection direction $-V_{x_1} e_1$; 
    \item\label{theorem x outside waiting} If $\bar{x} \notin  \overline{\mathcal{W}}$, then the optimal control $\bar{v}$ can be written as $\bar{v} = \bar{y}_1 -\bar{x}_1 + \bar{w}$, where 
     $\bar{y}_1$ is the metric projection of $\bar{x}_1$ into the set $\overline{\mathcal{W}}_1(\bar{z})$, 
    and $\bar{w}$ is the unique solution to the modified Skorokhod problem for the SDE \eqref{SDE dynamics} in $\overline{\mathcal{W}}$ starting at $\bar{y}:= (\bar{y}_1,\bar{z})$ with reflection direction $-V_{x_1}e_1$.   
\end{enumerate} 
\end{theorem}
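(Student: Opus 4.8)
The plan is to construct, for each $\varepsilon > 0$, an approximating waiting region $\mathcal{W}_\varepsilon \subset\subset \mathcal{W}$ with smooth boundary and a well-defined reflection direction, solve the Skorokhod problem there via classical results, show the resulting control $\bar{v}^\varepsilon$ is $\varepsilon$-optimal, pass to the limit $\varepsilon \to 0$ to obtain the optimal control $\bar{v}$, and finally verify that $\bar{v}$ satisfies properties (1)--(3) of Definition \ref{def Skorokhod problem} on the original domain $\overline{\mathcal{W}}$. The key structural input — and the real engine of the whole argument — is a monotonicity property of $V_{x_1}$ in the variable $x_1$: since $h_{x_1 x_1} > 0$ and by the sign conditions on $b^i$, $h$ in Assumption \ref{assumption}, one expects $V_{x_1}$ to be nondecreasing in $x_1$ (this is where the Dynkin-game representation of $V_{x_1}$, in the spirit of \cite{chiarolla&haussmann2000}, together with a comparison principle for the associated SDEs, gets used). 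Granting this, for each fixed $z \in \mathbb{R}^{d-1}$ the set $\mathcal{W}_1(z) = (\alpha(z), \beta(z))$ is an interval (Lemma \ref{lemma waiting region nonempty}), with $V_{x_1} = -1$ near the left endpoint and $V_{x_1} = +1$ near the right endpoint; hence the reflection direction $-V_{x_1} e_1$ points inward on $\partial\mathcal{W}$, which is exactly what makes the Skorokhod problem solvable.

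First I would set up the approximation: define $\mathcal{W}_\varepsilon := \{ x \in D : |V_{x_1}(x)| < 1 - \varepsilon \}$ (or a mollified version thereof with $C^2$ boundary), note that $\overline{\mathcal{W}}_\varepsilon \uparrow \overline{\mathcal{W}}$, and check using the $W^{2;\infty}_{loc}$-regularity of $V$ and the monotonicity of $V_{x_1}$ that on $\partial\mathcal{W}_\varepsilon$ the vector field $-V_{x_1} e_1$ is a genuine (oblique, but uniformly non-tangential in the $x_1$-direction) reflection direction. Then I would invoke the classical existence/uniqueness theory for reflected SDEs (Tanaka, Lions--Sznitman, Dupuis--Ishii, as cited) to get a strong solution $(X^{\bar x; \bar v^\varepsilon}, \bar v^\varepsilon)$ with $\bar v^\varepsilon$ continuous. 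Next, following the argument scheme of \cite{kruk2000}, I would apply Itô's formula (Itô--Tanaka / the generalized formula for $W^{2;\infty}_{loc}$ functions) to $e^{-\rho t} V(X^{\bar x;\bar v^\varepsilon}_t)$, use the HJB equation \eqref{HJB v.i.}: inside $\mathcal{W}_\varepsilon$ the term $\rho V - \mathcal{L}V - h = 0$ up to the $\varepsilon$-gap, and the reflection term contributes $-\int |V_{x_1}| \, d|\bar v^\varepsilon| \geq -(1) \int d|\bar v^\varepsilon|$ with the defect controlled by $\varepsilon$ — to obtain $J(\bar x; \bar v^\varepsilon) \leq V(\bar x) + C\varepsilon$. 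This is the $\varepsilon$-optimality.

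For the limit $\varepsilon \to 0$: using the a priori bounds on $\bar v^\varepsilon$ (total variation controlled by $J(\bar x;\bar v^\varepsilon) \leq V(\bar x) + 1$, uniformly in $\varepsilon$ for small $\varepsilon$), I would extract a limit control $\bar v$ via a Helly-type / weak compactness argument (this is where the auxiliary results of Appendix \ref{appendix proof propositions kruk} and the penalization convergence mentioned after Theorem \ref{theorem V sol HJB} come in), pass to the limit in the state equation using stability of SDE solutions, and conclude by lower semicontinuity of $J$ that $J(\bar x; \bar v) \leq V(\bar x)$, so $\bar v$ is the (unique, by Remark \ref{lemma appendix existence optimal controls}) optimal control. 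It then remains to verify that this $\bar v$ solves the modified Skorokhod problem on $\overline{\mathcal{W}}$: property (1) $X^{\bar x;\bar v}_t \in \overline{\mathcal{W}}$ follows because $X^{\bar x;\bar v^\varepsilon}$ stays in $\overline{\mathcal{W}}_\varepsilon \subset \overline{\mathcal{W}}$ and the inclusion is closed under the limit; property (2), that $\bar v$ acts only on $\partial\mathcal{W}$ in the direction $\nu = -V_{x_1}$, follows from the corresponding property of each $\bar v^\varepsilon$ on $\partial\mathcal{W}_\varepsilon$ together with continuity of $V_{x_1}$; property (3), about jumps along intervals of $\partial\mathcal{W}$ parallel to $e_1$, arises precisely when the limit $\bar v$ develops a jump that $\bar v^\varepsilon$ did not have — this happens at points where $\partial\mathcal{W}$ contains a vertical segment (a "flat" piece where $|V_{x_1}| = 1$ identically along a segment in $x_1$), and one reads off the jump direction from the sign of $V_{x_1}$ there. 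The case $\bar x \notin \overline{\mathcal{W}}$ in part (2) of the theorem reduces to the previous one: an optimal control must instantaneously push $X^{1}$ to the nearest point of $\overline{\mathcal{W}}_1(\bar z)$ — this initial jump $\bar y_1 - \bar x_1$ is forced because $|V_{x_1}| = 1$ outside $\mathcal{W}$ so the marginal cost of moving equals the marginal reduction in future cost, and convexity/strict convexity in $x_1$ makes the metric projection the unique optimizer — after which $X$ starts at $\bar y \in \overline{\mathcal{W}}$ and part (1) applies to the residual control $\bar w$.

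The main obstacle I anticipate is twofold. The first and most serious is establishing the monotonicity of $V_{x_1}$ in $x_1$ rigorously: the Dynkin-game representation must be set up carefully (it is only a variational/weak identification, so one works with penalized problems and passes to the limit), and the comparison principle has to be applied to the full interconnected system \eqref{SDE dynamics}, which is why the sign conditions on $b^i_{x_1}$, $b^i_{x_1 x_i}$, $h_{x_1 x_i}$ and the quite restrictive discount-factor bounds in Assumption \ref{assumption} are needed — these conditions exist precisely to make the comparison/monotonicity argument and the attendant semiconcavity estimates go through. The second obstacle is the limit passage identifying $\bar v$ as a genuine Skorokhod solution rather than merely an optimal control: one must rule out pathological behavior (e.g. the limit control "activating" in the interior of $\mathcal{W}$ in the limit, or oscillating), which requires the a priori regularity of $\partial \mathcal{W}$ in the $x_1$-direction coming from monotonicity — this is exactly the point where \cite{kruk2000}'s technique of bypassing free-boundary regularity is adapted, but it must be re-examined in the present setting because the reflection is only in the single direction $e_1$ while the state is multidimensional and coupled.
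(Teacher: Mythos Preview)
Your overall architecture matches the paper's --- approximating domains $\mathcal{W}_\varepsilon$, classical reflected SDE existence there, $\varepsilon$-optimality via It\^o's formula and the HJB equation, compactness to pass to the limit, verification of Definition~\ref{def Skorokhod problem} for the limit control, and the initial-jump reduction for $\bar{x}\notin\overline{\mathcal{W}}$ --- so the scaffolding is right.

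However, you have misidentified the crucial monotonicity. Monotonicity of $V_{x_1}$ \emph{in $x_1$} is just convexity of $V$, which follows directly from the convexity of $J$ in $(x,v)$ (Remark~\ref{lemma appendix existence optimal controls} / Theorem~\ref{theorem V sol HJB}) and does \emph{not} require the Dynkin game. What the Dynkin-game representation is actually used for in the paper (Theorem~\ref{theorem Dynkin game connection} and Proposition~\ref{proposition V_x1 nondecreasing in x2}) is monotonicity of $V_{x_1}$ in the \emph{other} coordinates, i.e.\ the sign of the cross-derivative $b_{x_1}^i V_{x_1 x_i}\geq 0$. This matters because to make $\partial\mathcal{W}_\varepsilon$ a $C^3$ hypersurface (so that Lions--Sznitman applies without any ad hoc mollification), one needs the \emph{strict} inequality $V_{x_1 x_1}>0$ in $\mathcal{W}$, not merely $\geq 0$. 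The paper obtains this by differentiating the HJB equation twice in $x_1$ to get
\[
(\rho-2b_1^1)V_{x_1 x_1}-\mathcal{L}V_{x_1 x_1}=h_{x_1 x_1}+2b_{x_1}^2 V_{x_1 x_2}+b_{x_1 x_1}^2 V_{x_2}
\]
and then applying the strong maximum principle; but the right-hand side is only guaranteed positive because $h_{x_1 x_1}>0$ \emph{and} $b_{x_1}^2 V_{x_1 x_2}\geq 0$ (from the cross-monotonicity) \emph{and} $b_{x_1 x_1}^2 V_{x_2}\geq 0$ (from convexity of $b^2$ and $V_{x_2}\geq 0$). Without the sign of $V_{x_1 x_2}$, the term $2b_{x_1}^2 V_{x_1 x_2}$ could be large and negative, killing the maximum-principle step. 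So the Dynkin game and comparison principle are aimed at the $x_2$-direction, not the $x_1$-direction; this is precisely where the sign hypotheses on $b_{x_1}^i$, $b_{x_1 x_i}^i$, $h_{x_i}$, $h_{x_1 x_i}$ in Assumption~\ref{assumption}\ref{assumption on b} enter.

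A secondary point: your limit step via Helly/weak compactness is plausible, but the paper takes a somewhat different route, showing directly that the sequence $(X^{\bar x;v^{\varepsilon_n}})_n$ is Cauchy in $\mathbb{P}\otimes e^{-\rho t}dt$-measure by exploiting the \emph{strict} convexity of $h$ in $x_1$ (Lemma~\ref{lemma construction optimal policy}); this gives pointwise a.e.\ convergence of both state and control, which makes the verification of the Skorokhod properties (in particular Proposition~\ref{proposition the cont parts acts on the boundary}, which is obtained from an It\^o-formula inequality rather than by tracking the supports of $d|\bar v^\varepsilon|$ through the limit) cleaner than a weak-limit argument would be.
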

In Section \ref{section proof sigma constant} we provide a proof of Theorem \ref{theorem main characterization} under Condition \ref{ass sigma constant} in Assumption \ref{assumption}. The strategy of the proof can be resumed in three main steps: 
\begin{itemize}
    \item[Step a.] In Subsection \ref{section a connection with Dynkin games} we study an important monotonicity property of $V_{x_1}$, through a connection with Dynkin games. 
    \item[Step b.] In Subsection \ref{section epsilon optimal policies}, this property  will allow us to construct $\varepsilon$-optimal policies as solutions to Skorokhod problems in domains $\overline{\mathcal{W}}_\varepsilon$ approximating  $\overline{\mathcal{W}}$.
    \item[Step c.] Finally, in Subsection \ref{section proof of the main results} we prove that the $\varepsilon$-optimal policies approximate the optimal policy, and that the latter is a solution to the Skorokhod problem in the original domain  $\overline{\mathcal{W}}$.
\end{itemize}  
The proof of Theorem \ref{theorem main characterization} under Condition \ref{ass sigma geometric} in Assumption \ref{assumption} follows similar rationales, and it is discussed in Section \ref{section proof main result geometric}. In particular, in Subsections \ref{subsection geom preliminary lemma} a preliminary lemma is proved, while in Subsection \ref{subsection sketch of the proof geom} we show how to use this lemma in order to repeat (with minor modifications) the arguments of Section \ref{section proof sigma constant}.  

\section{Proof of Theorem \ref{theorem main characterization} for constant volatility}\label{section proof sigma constant}
In this section we assume that Condition \ref{ass sigma constant} in Assumption \ref{assumption} holds. To simplify the notation, the proof is given for $d=2$, so that $D=\mathbb{R}^2$. The generalization to the case $d>2$ is straightforward.
\subsection{Step a: A connection to Dynkin games and the monotonicity property} 
\label{section a connection with Dynkin games} 
In this subsection we adopt an approach based on the variational formulation of the problem in order to show, in the spirit of \cite{chiarolla&haussmann2000}, a connection between the singular control problem \eqref{eq definition V} and a Dynkin game. 
This connection will enable us to prove a monotonicity property of $V_{x_1}$, which will be then fundamental in order to construct  $\varepsilon$-optimal controls.  

\subsubsection{The related Dynkin game} 
We begin by characterizing $V_{x_1}$ as a $W_{loc}^{2;\infty}$-solution to a \emph{two-obstacle problem}. The proof of the next result borrows arguments from \cite{chiarolla&haussmann2000} (see in particular Theorem 3.9, Proposition 3.10, and Theorem 3.11 therein). However, since in our case $b$ can be convex, the techniques used in \cite{chiarolla&haussmann2000} needs to be  refined, and used along with suitable estimates (described more in detail in the proof of Theorem \ref{theorem V sol HJB} in Appendix \ref{appendix auxiliary results}) on a penalization method. 
We provide a detailed proof for the sake of completeness.
\begin{theorem}
\label{theorem connection Dynkin game PDE} The function $V_{x_1}$ is a $W_{loc}^{2;\infty}(\mathbb{R}^{2})$-solution to the equation \begin{equation}
\label{eq Dynking game HJB}
\\max \{(\rho-b_{1}^1) V_{x_1}-\mathcal{L} V_{x_1} - \hat{h} , | V_{x_1}| - 1 \} = 0,  \quad \text{a.e.\ in } \mathbb{R}^{2},  
\end{equation}
where $\hat{h}:=h_{x_1} + b_{x_1}^2V_{x_2}$.
\end{theorem}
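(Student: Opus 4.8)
The plan is to differentiate the variational inequality \eqref{HJB v.i.} for $V$ formally in the direction $x_1$ and then make this rigorous by passing to the limit in a penalization scheme. Concretely, I would start from the penalized equations
\[
\rho V^n - \mathcal{L} V^n - h + \beta_n(V^n_{x_1}) = 0, \quad \text{in } \mathbb{R}^2,
\]
where $\beta_n$ is a smooth increasing penalization (with $\beta_n' \geq 0$) forcing the gradient constraint $|V^n_{x_1}| \leq 1$ in the limit, exactly the scheme whose convergence is established in the proof of Theorem \ref{theorem V sol HJB} in the appendix. Since $h$ and the $b^i$ are $C^{2;1}_{loc}$, the functions $V^n$ are smooth enough to differentiate the penalized equation with respect to $x_1$; writing $u^n := V^n_{x_1}$ and using $\bar b(x) = (a_1 + b_1^1 x_1, b^2(x_1,x_2))^\top$ with $\sigma$-part independent of $x_1$ (Condition \ref{ass sigma constant}), the terms reorganize into
\[
(\rho - b_1^1) u^n - \mathcal{L} u^n - \hat h^n + \beta_n'(u^n)\, u^n_{x_1} = 0,
\]
with $\hat h^n = h_{x_1} + b^2_{x_1} V^n_{x_2}$; the extra drift term $-b_1^1 u^n$ comes from differentiating $b_1^1 x_1 \cdot V^n_{x_1}$, and the cross term $b^2_{x_1} V^n_{x_2}$ from differentiating $b^2(x_1,x_2) V^n_{x_2}$. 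This identifies $u^n$ as the solution of a penalized PDE with the "reaction" coefficient $\rho - b_1^1$ and source $\hat h^n$, penalized by $\beta_n'(u^n) u^n_{x_1} \geq 0$ — which is precisely the penalized form of the two-obstacle problem \eqref{eq Dynking game HJB}, since $\beta_n'(u^n)u^n_{x_1}$ acts as a one-sided penalization keeping $|u^n| \leq 1$.

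Next I would establish the uniform $W^{2;\infty}_{loc}$ bounds on $u^n = V^n_{x_1}$ needed to pass to the limit. The $C^{0}_{loc}$ and $C^{1}_{loc}$ bounds on $V^n$ (hence $C^0_{loc}$ on $u^n$) follow from the a priori estimates in the appendix; the gradient bound $|u^n| \leq 1 + o(1)$ is the content of the penalization convergence; and $W^{2;p}_{loc}$ bounds on $u^n$ uniform in $n$ come from interior elliptic estimates applied to the $u^n$-equation, using that $\hat h^n$ is bounded in $L^\infty_{loc}$ (here the semiconcavity/second-derivative estimates on $V$ from Theorem \ref{theorem V sol HJB} enter to control $h_{x_1}$, $b^2_{x_1}$, $V^n_{x_2}$ locally uniformly). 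With these bounds, a subsequence of $u^n$ converges weakly in $W^{2;p}_{loc}$ and strongly in $C^1_{loc}$ to $V_{x_1}$, and one checks that the limit satisfies the gradient constraint $|V_{x_1}| \leq 1$ a.e.\ and the inequality $(\rho - b_1^1)V_{x_1} - \mathcal{L}V_{x_1} - \hat h \leq 0$ a.e., together with complementarity: on $\{|V_{x_1}| < 1\}$ the penalization term vanishes in the limit so equality holds. This is the standard way one extracts a two-obstacle problem as a limit of penalized equations, and it mirrors Theorem 3.9–3.11 of \cite{chiarolla&haussmann2000}.

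The main obstacle will be justifying the differentiation and the uniform second-order estimates when $b$ is genuinely convex (not affine): in \cite{chiarolla&haussmann2000} the drift is affine, so $b^2_{x_1}$ is constant and the source $\hat h$ is manifestly as regular as $h_{x_1}$, whereas here $b^2_{x_1 x_1}$ and $b^2_{x_1 x_2}$ appear when one differentiates a second time, and one must control $V_{x_2}$ and its derivatives in $L^\infty_{loc}$ uniformly in the penalization parameter. This is exactly where the structural sign conditions in Assumption \ref{assumption}\eqref{assumption on b} (namely $b^i_{x_1} \leq 0$, $b^i_{x_1 x_i} \leq 0$, $h_{x_i} \geq 0$, $h_{x_1 x_i} \leq 0$) and the semiconcavity estimates of Theorem \ref{theorem V sol HJB} must be invoked to get the required one-sided bounds; I expect the bulk of the proof to be the careful bookkeeping showing $\hat h^n$ is bounded in $L^\infty_{loc}$ uniformly in $n$ and that the limiting object is genuinely in $W^{2;\infty}_{loc}$ rather than merely $W^{2;p}_{loc}$, the latter upgrade coming from the $L^\infty_{loc}$ bound on $\hat h$ together with the $L^\infty$ gradient constraint via the structure of the obstacle problem.
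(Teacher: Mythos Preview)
Your overall plan---differentiate the penalized HJB in $x_1$ and pass to the limit---is the same route the paper takes, but the step you single out as routine is exactly the one that fails. You write that ``$W^{2;p}_{loc}$ bounds on $u^n$ uniform in $n$ come from interior elliptic estimates applied to the $u^n$-equation, using that $\hat h^n$ is bounded in $L^\infty_{loc}$.'' But the $u^n$-equation also contains the differentiated penalty, which in the paper's symmetric penalization reads $\tfrac{2}{\varepsilon}\beta'((V^\varepsilon_{x_1})^2-1)\,V^\varepsilon_{x_1}\,V^\varepsilon_{x_1 x_1}$; its coefficient blows up as $\varepsilon\to 0$, so you cannot absorb it into the operator uniformly, nor move it to the right-hand side and bound it in $L^p_{loc}$ uniformly. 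Consequently there are no uniform-in-$n$ interior $W^{2;p}$ estimates for $u^n$ available by this route. (Your side claim ``$\beta_n'(u^n)u^n_{x_1}\ge 0$'' is also unjustified: for a two-sided constraint $\beta_n'$ changes sign, and $u^n_{x_1}$ has no a priori sign.)

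The paper avoids this obstruction by not seeking uniform third-order estimates on $V^\varepsilon$ at all. It passes to the limit in the \emph{variational} (weak) formulation: multiply the differentiated penalized equation by $\psi(U-\psi V^\varepsilon_{x_1})$ with $U\in\{|U|\le 1\}$, integrate by parts, and observe that the penalty term has the right sign because $V^\varepsilon$ is \emph{convex}, so $V^\varepsilon_{x_1 x_1}\ge 0$ and $\beta'((V^\varepsilon_{x_1})^2-1)\,V^\varepsilon_\psi(U-V^\varepsilon_\psi)\le 0$ whenever $|U|\le 1$. This lets one drop the blowing-up term and take limits using only the already-available $W^{2;\infty}_{loc}$ bounds on $V^\varepsilon$ (not on $V^\varepsilon_{x_1}$). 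The limit $V_{x_1}$ then solves the localized two-obstacle variational inequality, and its $W^{2;\infty}_{loc}$ regularity is obtained \emph{a posteriori} from obstacle-problem regularity theory (Friedman). Finally, note that the structural sign assumptions on $b^i_{x_1}$, $h_{x_i}$, $h_{x_1x_i}$ are not used in this theorem; they enter later, in the monotonicity Proposition~\ref{proposition V_x1 nondecreasing in x2}. What is essential here is the convexity of $V^\varepsilon$.
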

\begin{proof} We organize the proof in two steps.
\smallbreak \noindent
\emph{Step 1.} In this step we show that the function $V_{x_1}$ is a solution to a variational inequality with a local operator, and that $V_{x_1} \in W_{loc}^{2;\infty}(\mathbb{R}^{2})$.   
Fix $B \subset \mathbb{R}^{2}$ open bounded and consider a nonnegative localizing function $\psi \in C_c^\infty (B)$. 
Define the sets 
$$
\mathcal{K}:=\big\{ U \in W_{loc}^{1;2}(\mathbb{R}^{2}) \, | \, |U| \leq 1 \ \text{a.e.} \big\}\quad  \text{and}\quad \mathcal{K}_\psi:=\{ \psi U\, |\, U \in \mathcal{K} \}.  
$$ 
We show in the sequel that the function $W:=V_{x_1} \psi$ is a solution in $\mathcal{K}_\psi$ to the variational inequality 
\begin{equation}
\label{eq variational inequality with psi} 
A_B(W, U - W) \geq \langle \hat{H} , U - W \rangle_B, \quad \text{for each } U \in \mathcal{K}_\psi,
\end{equation}
where $\hat{H} := \hat{h} \psi -   V_{x_1} \mathcal{L} \psi - {\sigma^2} DV_{x_1} D\psi$, the operator $A_B: W^{1;2}(B) \times W^{1;2}(B)  \to \mathbb{R}$ is given by
$$
A_B(\bar{U}, U) := \frac{\sigma^2}{2} \sum_{i=1}^{2} \langle \bar{U}_{x_i}, U_{x_i} \rangle_B -  \langle \bar{b} D\bar{U}, U \rangle_B + (\rho-b_{1}^1)\langle \bar{U}, U \rangle_B \quad \text{for each } \bar{U},  U \in W^{1;2}(B),
$$  
and $\langle \cdot, \cdot\rangle_B$ denotes the scalar product in $L^2(B)$. 
                   
Let us begin by introducing a family of penalized versions of the HJB equation \eqref{HJB v.i.}. Let $\beta \in C^\infty(\mathbb{R})$ be a convex nondecreasing function with $\beta(r)=0$ if $r\leq 0$ and $\beta(r)=2r-1$ if $r\geq 1$. For each $\varepsilon > 0$, let $V^\varepsilon$ be defined as in \eqref{eq control problem penalized}. As in Step 1 in the proof of Theorem \ref{theorem V sol HJB} in Appendix \ref{appendix auxiliary results}, $V^\varepsilon$  is a  $C^2$-solution to the partial differential equation 
\begin{equation}
    \label{penalized HJB} 
     \rho V^\varepsilon-\mathcal{L}V^\varepsilon + \frac{1}{\varepsilon}\beta ( (V_{x_1}^\varepsilon)^2 - 1 ) = h, \quad x \in \mathbb{R}^{2}.
\end{equation}
As in Step 3 in the proof of Theorem \ref{theorem V sol HJB} in Appendix \ref{appendix auxiliary results}, for each $R>0$ there exists a constant $C_R$ such that   
\begin{equation}
\label{eq uniform sobolev estimate V}
\sup_{\varepsilon \in (0,1)} \| V^\varepsilon \|_{W^{2;\infty}(B_R)} \leq C_R.
\end{equation}
Moreover (as in \eqref{eq appendix Sobolev limits} in the proof of Theorem \ref{theorem V sol HJB}), as $\varepsilon \to 0$, on each subsequence we have: 
\begin{align}
\label{eq limits in sobolev}
     & (V^\varepsilon,DV^\varepsilon) \text{ converges to $(V,DV)$ uniformly  in $ B_R$}; \\ \notag
     & D^2V^\varepsilon \text{ converges to $ D^2V$ weakly  in $L^2( B_R)$}.
\end{align}

We now show that $V_{x_1} \in \mathcal{K}$. Since the $W_{loc}^{1;2}$-regularity of $V_{x_1}$ is already known (cf.\ Theorem \ref{theorem V sol HJB} in Appendix \ref{appendix auxiliary results}), we only need to show that $|V_{x_1}| \leq 1 $ in  $\mathbb{R}^{2}$. 
To this end, take $R>0$ and observe that, by \eqref{eq uniform sobolev estimate V} and \eqref{penalized HJB}, we have  
\begin{equation}\label{eq L2 estimate beta}
   \sup_{\varepsilon \in (0,1)} \| \beta ((V_{x_1}^\varepsilon)^2 -1 ) \|_{L^2(B_R)} \leq C_R \varepsilon, 
\end{equation} 
where the constant $C_R>0$ does not depend on $\varepsilon$. Moreover, unless to consider a larger $C_R$, by the estimate \eqref{eq uniform sobolev estimate V} and the definition of $\beta$, we also have the pointwise estimate
\begin{equation}\label{eq pointwise estimate beta}
|\beta ((V_{x_1}^\varepsilon)^2 -1 )| \leq 2((V_{x_1}^\varepsilon)^2 +1 ) \leq C_R, \quad \text{ on $B_R$, for each $\varepsilon \in (0,1)$}. 
\end{equation}   
Therefore, the limits in \eqref{eq limits in sobolev} and the estimates \eqref{eq pointwise estimate beta} allow to invoke the dominated convergence theorem to deduce, thanks to \eqref{eq L2 estimate beta}, that
$$
\| \beta ((V_{x_1})^2 -1 ) \|_{L^2(B_R)} = \lim_{\varepsilon \to 0} \| \beta ((V_{x_1}^\varepsilon)^2 -1 ) \|_{L^2(B_R)} =0.   
$$ 
Since $R$ is arbitrary, we conclude that $|V_{x_1}| \leq 1$ a.e.\ in $\mathbb{R}^{2}$, and therefore that $W\in\mathcal{K}_\psi$.

We continue by proving \eqref{eq variational inequality with psi}. Since $V^\varepsilon$ is a solution to \eqref{penalized HJB}, a standard bootstrapping argument (using Theorem 6.17 at p.\ 109 in \cite{gilbarg2001}) allows to improve the regularity of $V^\varepsilon$ and to prove that $V^\varepsilon \in C^{4}$. 
Therefore, we can differentiate  equation \eqref{penalized HJB} with respect to $x_1$ in order to get an equation for  $V_{x_1}^\varepsilon$. 
That is,
\begin{equation}
    \label{eq penalized derived HJB}
    [(\rho-b_{1}^1)-\mathcal{L}]V_{x_1}^\varepsilon + \frac{2}{\varepsilon}\beta' ( (V_{x_1}^\varepsilon)^2 - 1 )V_{x_1}^\varepsilon V_{x_1 x_1 }^\varepsilon = \hat{h}^\varepsilon , \quad x \in \mathbb{R}^{2},  
\end{equation}
where we have defined $\hat{h}^\varepsilon := h_{x_1} + b_{x_1}^2 V_{x_2}^\varepsilon$. Moreover, by \eqref{eq penalized derived HJB}, the localized function $V_\psi^\varepsilon:= V_{x_1}^\varepsilon \psi$ is a solution to the equation 
\begin{equation}
    \label{eq penalized localized derived HJB}
    [(\rho-b_{1}^1)-\mathcal{L}]V_\psi^\varepsilon + \frac{2}{\varepsilon}\beta' ( (V_{x_1}^\varepsilon)^2 - 1 )V_\psi^\varepsilon V_{x_1 x_1 }^\varepsilon = \hat{H}^\varepsilon , \quad x \in \mathbb{R}^{2},  
\end{equation}
where $\hat{H}^\varepsilon := \hat{h}^\varepsilon \psi - V_{x_1}^\varepsilon \mathcal{L}\psi - {\sigma^2} DV_{x_1}^\varepsilon D\psi$. 

Let now $U \in \mathcal{K}_\psi$. Taking the scalar product of \eqref{eq penalized localized derived HJB} with $U-V_{\psi}^\varepsilon$, an integration by parts gives
\begin{equation}
\label{eq variational inequaliti epsilon psi}
A_B(V_{\psi}^\varepsilon, U - V_{\psi}^\varepsilon) + \frac{2}{\varepsilon} \langle \beta' ( (V_{x_1}^\varepsilon)^2 - 1 )V_{\psi}^\varepsilon V_{x_1 x_1 }^\varepsilon, U - V_{\psi}^\varepsilon \rangle_B = \langle \hat{H}^\varepsilon , U - V_{\psi}^\varepsilon \rangle_B. 
\end{equation} 
Moreover, since $\sigma >0$, the operator 
$
\big( \frac{\sigma^2}{2} \sum_{i=1}^{2} \langle {U}_{x_i}, U_{x_i} \rangle_B \big)^{{1}/{2}}, \ U \in W^{1;2}(B),
$ 
defines a norm on $W_0^{1;2}(B)$. Therefore, such an operator is lower semi-continuous with respect to the  weak convergence in $W_0^{1;2}(B)$. By the limits in \eqref{eq limits in sobolev}, this implies that 
\begin{equation} 
    \label{eq liminf of the norm}
    \liminf_{\varepsilon \to 0} \frac{\sigma^2}{2} \sum_{i=1}^{2} \langle V_{\psi x_i}^\varepsilon,  V_{\psi x_i}^\varepsilon \rangle_B 
    \geq  \frac{\sigma^2}{2} \sum_{i=1}^{2} 
     \langle W_{ x_i}, W_{ x_i} \rangle_B. 
\end{equation}  
Therefore exploiting the convergences in \eqref{eq limits in sobolev} and \eqref{eq liminf of the norm}, taking the liminf as $\varepsilon \to 0$ in \eqref{eq variational inequaliti epsilon psi}, we obtain   
\begin{equation}\label{eq liminf variational inequality}
A_B(W, U - W) + \liminf_{\varepsilon \to 0}  \frac{2}{\varepsilon} \langle \beta' ( (V_{x_1}^\varepsilon)^2 - 1 )V_{\psi}^\varepsilon V_{x_1 x_1 }^\varepsilon, U - V_{\psi}^\varepsilon \rangle_B \geq \langle \hat{H} , U - W \rangle_B. 
\end{equation}  
In order to prove \eqref{eq variational inequality with psi}, it thus only remains to show that the scalar product in \eqref{eq liminf variational inequality} involving $\beta'$ is nonpositive. 
Write $U$ as $U= \psi \bar{U}$, with $\bar{U} \in \mathcal{K}$. If $x\in \mathbb{R}^{2}$ is such that $(V_{x_1}^\varepsilon(x))^2 \leq (\bar{U}(x))^2$, then $\beta'((V_{x_1}^\varepsilon(x))^2 -1)=0$ since $\bar{U} \in \mathcal{K}$. On the other hand, if $(V_{x_1}^\varepsilon(x))^2 > (\bar{U}(x))^2$ then we have $2V_{\psi}^\varepsilon (U - V_{\psi}^\varepsilon) \leq U^2 -(V_{\psi}^\varepsilon)^2 < 0$. Hence, since $V^\varepsilon$ is convex and $\beta'$ nonnegative, in both cases we deduce that
$$
\frac{2}{\varepsilon}\beta' ( (V_{x_1}^\varepsilon)^2 - 1 )V_{\psi}^\varepsilon V_{x_1 x_1 }^\varepsilon( U - V_{\psi}^\varepsilon) \leq 0. 
$$ 
Therefore, we conclude that $W$ is a solution to the variational inequality \eqref{eq variational inequality with psi}. %Also (a slightly different version of) Theorem 3.4 at p.\ 26 in \cite{friedman2010} can be employed, from which we deduce that the solution to the variational inequality \eqref{eq variational inequality with psi} is in fact unique.

Finally, since $\sigma>0$, the $W_{loc}^{2;\infty}$-regularity of $V_{x_1}$ follows from Theorem 4.1 at p.\ 31 in \cite{friedman2010}, slightly modified in order to fit problem \eqref{eq variational inequality with psi} (see Problem 1 at p.\ 44, combined with Problems 2 and 5 at p.\ 29 in \cite{friedman2010}).   

\smallbreak \noindent 
\emph{Step 2.} We now prove that $V_{x_1}$ is a pointwise solution to  \eqref{eq Dynking game HJB}. %As in Step 1, uniqueness of a solution to equation \eqref{eq Dynking game HJB} follows from (a slightly different version of) Theorem 3.4 at p.\ 26 in \cite{friedman2010}. Therefore, we only need to prove that $V_{x_1}$ satisfies \eqref{eq Dynking game HJB}.
For $B \subset \mathbb{R}^{2}$ open bounded and $\psi \in C_c^\infty (B)$, by Step 1 we have that $V_{x_1}\psi$ is a solution to the variational inequality \eqref{eq variational inequality with psi}.  
Moreover, thanks to the regularity of $V_{x_1}$, an integration by parts in   \eqref{eq variational inequality with psi} reveals that,
\begin{equation*}
\langle \hat{L}\psi, (U-V_{x_1})\psi \rangle_B \geq 0, \ \text{for each }U\in \mathcal{K}, 
\end{equation*}
where $\hat{L}:= [(\rho-b_{1}^1)-\mathcal{L}]V_{x_1} -\hat{h}$. The latter, in turn implies that
\begin{equation}
\label{eq variational inequality integrated by parts}
\langle \hat{L}\psi, (U-V_{x_1})\psi \rangle_B \geq 0, \ \text{for each } U\in \widehat{\mathcal{K}}:= \big\{ U \in L_{loc}^{2}(\mathbb{R}^{2}) \, | \, |U| \leq 1 \ \text{a.e.} \big\}.
\end{equation}
For every $\varepsilon >0$, define the sets
$\widehat{\mathcal{W}}_{\varepsilon} := \{ |V_{x_1}| < 1-\varepsilon \}$ and,
for $N>0$ and $0<\eta<\varepsilon / N$, set  $\hat{\psi}:= - \eta \hat{L} \mathds{1}_{\widehat{\mathcal{W}}_\varepsilon} \mathds{1}_{\{ \hat{L} < N \}  }$. 
Define next  $U:= V_{x_1} + \hat{\psi}$, and observe that $U \in \widehat{\mathcal{K}}$. 
With this choice of $U$,  the inequality \eqref{eq variational inequality integrated by parts} rewrites as 
\begin{equation*}
    0 \leq \int_B \hat{L}(U-V_{x_1}) \psi^2 dx = - \eta \int_{ \mathbb{R}^{2} } \hat{L}^2 \psi^2 \mathds{1}_{\widehat{\mathcal{W}}_\varepsilon} \mathds{1}_{\{ |\hat{L}| < N \}  } dx, 
\end{equation*}
which in turn implies that $\int_{ \mathbb{R}^{2} } \hat{L}^2 \psi^2 \mathds{1}_{\widehat{\mathcal{W}}_\varepsilon} \mathds{1}_{\{ |\hat{L}| < N \}  } dx=0$. Taking limits as $N \to \infty$ and $\varepsilon \to 0$, by monotone convergence theorem, we conclude that $\int_{\mathcal{W} } \hat{L}^2 \psi^2 dx =0 $; that is, $\hat{L}=0$ a.e.\ in $\mathcal{W}$. 

Finally, defining the two regions 
\begin{equation}
\label{definition of I+ and I-}
\mathcal{I}_-:=\{ x \in \mathbb{R}^{2}\, |\, V_{x_1}(x)=-1 \}\quad \text{and} \quad\mathcal{I}_+ :=\{ x \in \mathbb{R}^{2}\, |\, V_{x_1}(x)=1 \},
\end{equation}
we can repeat the arguments above with $\hat{\psi}:= - \eta \hat{L}^+ \mathds{1}_{\mathcal{I}_+ } \mathds{1}_{\{ |\hat{L}| < N \}  }$ and $\hat{\psi}:= - \eta \hat{L}^- \mathds{1}_{\mathcal{I}_-} \mathds{1}_{\{ |\hat{L}| < N \}  }$, in order to deduce that $\hat{L} \leq 0 $ a.e.\ in $\mathcal{I}_{+} \cup \mathcal{I}_-$, and thus completing the proof of the theorem. 
\end{proof} 

Theorem \ref{theorem connection Dynkin game PDE} allows to provide a probabilistic representation of $V_{x_1}$ in terms of a Dynkin game. 
Let $\mathcal{T}$ be the set of $\mathbb{F}$-stopping times, and, for $\tau_1, \tau_2 \in \mathcal{T}$,  define the functional 
$$ 
G(x; \tau_1,\tau_2):= \mathbb{E} \bigg[ \int_0^{\tau_1 \land \tau_2} e^{-\hat{\rho} t}  \hat{h}(X_t^x)   dt 
- e^{-\hat{\rho} \tau_1 } \mathds{1}_{ \{\tau_1 \leq \tau_2,\, \tau_1 < \infty \} } 
+ e^{-\hat{\rho} \tau_2}  \mathds{1}_{ \{\tau_2 < \tau_1 \} } \bigg], 
$$ 
where $\hat{h}=h_{x_1} +b_{x_1}^2 V_{x_2}$ (cf.\ Theorem \ref{theorem connection Dynkin game PDE}), the process $X^x$ denotes the solution to the uncontrolled SDE \eqref{SDE dynamics uncontrolled}, and $\hat{\rho}:=\rho-b_{1}^1$.
\begin{comment}
the process $Y^x$ is the solution to the differential equation (with random coefficient)
\begin{equation}\label{eq costate Y}
dY_t^x=Y_t^xb_{1}^1(X_t^x) dt, \quad t\geq 0, \quad Y_0^x=1.
\end{equation} 
\end{comment}
Consider the 2-player stochastic differential game of optimal stopping in which Player 1 (resp.\ Player 2) is allowed to choose  a stopping time $\tau_1$ (resp.\ $\tau_2$) in order to maximize (resp.\ minimize) the functional $G$.
 
Recalling the definitions of $\mathcal{I}_-$ and $\mathcal{I}_+$ given in \eqref{definition of I+ and I-}, from Theorem \ref{theorem connection Dynkin game PDE} we obtain the following verification theorem. 
Its proof is based on a generalized version of It\^o's formula (see Theorem 1 at p.\ 122 in \cite{krylov1980}) which can be applied to the process $( e^{-\hat{\rho} t} V_{x_1}(X_t^x) )_{t\geq 0 }$ because $V_{x_1} \in W_{loc}^{2;\infty}(\mathbb{R}^{2})$ by Theorem \ref{theorem connection Dynkin game PDE}. 
Since these arguments are standard, we omit the details in the interest of length. 
\begin{theorem} 
\label{theorem Dynkin game connection} 
For each $x\in \mathbb{R}^{2}$, the profile strategy $(\bar{\tau}_1, \, \bar{\tau}_2)$ given by the stopping times  
$$
\bar{\tau}_1 := \inf \{ t \geq 0 \, | \,  X_t^x \in  \mathcal{I}_- \} \ \ \text{and} \ \ \bar{\tau}_2 := \inf \{ t \geq 0 \,  | \,  X_t^x \in \mathcal{I}_+ \}
$$ 
is a saddle point of the Dynkin game, and its corresponding value equals $V_{x_1}(x)$; that is, 
$$
 G(x;{\tau}_1,  \bar{\tau}_2) \leq V_{x_1}(x)=G(x;\bar{\tau}_1,  \bar{\tau}_2)  \leq G(x;\bar{\tau}_1,  {\tau}_2), \quad \text{for each } \tau_1, \tau_2 \in \mathcal{T}.
$$ 
Moreover, we have
\begin{equation}\label{eq sup inf}
    V_{x_1}(x)= \sup_{\tau_1 } \inf_{\tau_2 } G(x;{\tau}_1,  {\tau}_2) =  \inf_{\tau_2} \sup_{\tau_1 } G(x;{\tau}_1,  {\tau}_2). 
\end{equation}
\end{theorem}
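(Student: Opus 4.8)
The plan is to run the classical verification argument for zero-sum optimal stopping games, with Theorem~\ref{theorem connection Dynkin game PDE} providing the analytic input and the generalized It\^o formula of \cite[Theorem~1, p.~122]{krylov1980} the probabilistic tool. The It\^o formula applies to $(e^{-\hat\rho t}V_{x_1}(X^x_t))_{t\ge0}$ precisely because $V_{x_1}\in W_{loc}^{2;\infty}(\mathbb R^2)$ (Theorem~\ref{theorem connection Dynkin game PDE}) and, under Condition~\ref{ass sigma constant}, the uncontrolled diffusion $X^x$ has nondegenerate diffusion matrix $\sigma I$, so it spends zero Lebesgue-time in null sets and the a.e.-defined function $\mathcal L V_{x_1}$ may be inserted in the expansion. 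Abbreviating $\hat L:=[(\rho-b_1^1)-\mathcal L]V_{x_1}-\hat h$, so that $\mathcal L V_{x_1}-\hat\rho V_{x_1}=-\hat L-\hat h$, an application of It\^o's formula between $0$ and a stopping time $\theta$, followed by localization of the resulting stochastic integral at the exit times of balls $B_N$ (on which $DV_{x_1}\in L^\infty$) and a limit passage, yields
\[
V_{x_1}(x)=\mathbb E\Big[e^{-\hat\rho\theta}V_{x_1}(X^x_\theta)+\int_0^\theta e^{-\hat\rho s}\hat h(X^x_s)\,ds+\int_0^\theta e^{-\hat\rho s}\hat L(X^x_s)\,ds\Big].
\]
The verification will use three facts coming from \eqref{eq Dynking game HJB} and the free-boundary (complementarity) analysis inside the proof of Theorem~\ref{theorem connection Dynkin game PDE}: $\hat L\le 0$ a.e.\ on $\mathbb R^2$, $\hat L=0$ a.e.\ on $\mathcal W\cup\mathcal I_-$, and $|V_{x_1}|\le 1$ everywhere.

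For the inequality $V_{x_1}(x)\le G(x;\bar\tau_1,\tau_2)$, valid for every $\tau_2\in\mathcal T$, I would take $\theta=\bar\tau_1\wedge\tau_2$: for $s<\theta$ one has $X^x_s\notin\mathcal I_-$, hence $\hat L(X^x_s)\le 0$ and the last integral above is $\le 0$; on $\{\bar\tau_1\le\tau_2,\,\bar\tau_1<\infty\}$ the boundary term equals $-e^{-\hat\rho\bar\tau_1}$ because $X^x_{\bar\tau_1}\in\mathcal I_-$, on $\{\tau_2<\bar\tau_1\}$ it is at most $e^{-\hat\rho\tau_2}$ because $V_{x_1}\le 1$, and on $\{\bar\tau_1=\tau_2=\infty\}$ it vanishes in the limit by the transversality estimate $e^{-\hat\rho t}|V_{x_1}(X^x_t)|\le e^{-\hat\rho t}\to0$ (recall $\hat\rho=\rho-b_1^1>0$ under Assumption~\ref{assumption}, since $|b_1^1|\le\bar L$ and $\rho>3\rho^*\bar L$). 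Collecting these estimates and recognizing the right-hand side as $G(x;\bar\tau_1,\tau_2)$ gives the claim. Symmetrically, for $V_{x_1}(x)\ge G(x;\tau_1,\bar\tau_2)$, valid for every $\tau_1\in\mathcal T$, I would take $\theta=\tau_1\wedge\bar\tau_2$: for $s<\theta$ one has $X^x_s\notin\mathcal I_+$, hence $X^x_s\in\mathcal W\cup\mathcal I_-$ where $\hat L(X^x_s)=0$, so the last integral vanishes; the boundary term equals $+e^{-\hat\rho\bar\tau_2}$ on $\{\bar\tau_2<\tau_1,\,\bar\tau_2<\infty\}$, is at least $-e^{-\hat\rho\tau_1}$ on $\{\tau_1\le\bar\tau_2\}$ (since $V_{x_1}\ge-1$), and vanishes in the limit on $\{\tau_1=\bar\tau_2=\infty\}$. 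Specializing the two chains to $\tau_1=\bar\tau_1$, $\tau_2=\bar\tau_2$ turns them into the identity $V_{x_1}(x)=G(x;\bar\tau_1,\bar\tau_2)$ (for $s<\bar\tau_1\wedge\bar\tau_2$ one has $X^x_s\in\mathcal W$, so every inequality is an equality), whence $(\bar\tau_1,\bar\tau_2)$ is a saddle point with value $V_{x_1}(x)$; taking $\inf_{\tau_2}$ in the first inequality and $\sup_{\tau_1}$ in the second, together with the trivial bound $\sup_{\tau_1}\inf_{\tau_2}G(x;\tau_1,\tau_2)\le\inf_{\tau_2}\sup_{\tau_1}G(x;\tau_1,\tau_2)$, squeezes both iterated values to $V_{x_1}(x)$ and proves \eqref{eq sup inf}.

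The only genuinely technical points, which I expect to be the main (though routine) obstacle, are the integrability estimates legitimizing the limit passages above: that the localized stochastic integral $\int_0^{\cdot}e^{-\hat\rho s}DV_{x_1}(X^x_s)\sigma\,dW_s$ is a true martingale after stopping at the exit times of $B_N$ (here $DV_{x_1}\in L^\infty(B_N)$ and $X^x$ has all polynomial moments), that $\mathbb E\big[\int_0^\infty e^{-\hat\rho s}|\hat h(X^x_s)|\,ds\big]<\infty$, and that dominated convergence applies upon removing the localization and letting $\theta\to\infty$ on the event where both stopping times are infinite. Finiteness of the $\hat h$-integral is exactly the place where the growth bound $|\hat h(x)|\le C(1+|x|^{p-1})$ --- which follows from the assumptions on $h$ in Assumption~\ref{assumption}, the boundedness of $b^2_{x_1}$ (a convex, globally Lipschitz drift has bounded first derivatives), and the polynomial growth of $DV$ from Theorem~\ref{theorem V sol HJB} --- meets the discount condition $\rho>3\rho^*\bar L$ through the standard estimate $\mathbb E[|X^x_t|^{p-1}]\le C(1+|x|^{p-1})e^{C\bar L t}$. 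Since these verifications are entirely standard, they can be compressed, as indicated in the statement.
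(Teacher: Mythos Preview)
Your approach is exactly the paper's: apply the generalized It\^o formula from \cite{krylov1980} to $e^{-\hat\rho t}V_{x_1}(X^x_t)$ (legitimate because $V_{x_1}\in W_{loc}^{2;\infty}$ by Theorem~\ref{theorem connection Dynkin game PDE} and $\sigma>0$), localize, and run the standard two-sided verification, bounding the drift integral via the sign of $\hat L$ and the terminal term via $|V_{x_1}|\le 1$ together with $V_{x_1}=\mp1$ on $\mathcal I_\mp$. The structure and the technical checks you list (martingale after localization, integrability of $\hat h$, transversality $e^{-\hat\rho t}|V_{x_1}|\to 0$) match the paper's intended argument.

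One small correction on the complementarity you invoke: the variational inequality $\langle \hat L\psi,(U-V_{x_1})\psi\rangle_B\ge 0$ for all $|U|\le 1$ yields $\hat L=0$ a.e.\ on $\mathcal W$, $\hat L\le 0$ a.e.\ on $\mathcal I_+$, and $\hat L\ge 0$ a.e.\ on $\mathcal I_-$ --- not $\hat L\le 0$ on all of $\mathbb R^2$, nor $\hat L=0$ on $\mathcal I_-$. This is precisely what your two chains require (namely $\hat L\le 0$ for $s<\bar\tau_1$, i.e.\ off $\mathcal I_-$, and $\hat L\ge 0$ for $s<\bar\tau_2$, i.e.\ off $\mathcal I_+$), so the argument is unaffected once the signs are stated correctly; in the second chain the $\hat L$-integral is $\ge 0$ rather than $=0$, which still gives the desired lower bound.
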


\subsubsection{The monotonicity property} 
We now show how Condition \ref{assumption on b} in Assumption \ref{assumption} together with Theorems \ref{theorem connection Dynkin game PDE} and \ref{theorem Dynkin game connection} lead to an important monotonicity of $V_{x_1}$.
\begin{proposition}
\label{proposition V_x1 nondecreasing in x2} We have $ b_{x_1}^2  V_{x_1 x_2} \geq 0$ in $\mathbb{R}^{2}$. 
\end{proposition}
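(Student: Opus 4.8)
The plan is to derive the sign of $V_{x_1 x_2}$ from the probabilistic representation of $V_{x_1}$ as the value of the Dynkin game in Theorem~\ref{theorem Dynkin game connection}, combined with a comparison principle for the uncontrolled SDE \eqref{SDE dynamics uncontrolled}. Since Condition~\ref{assumption on b} assumes $b_{x_1}^2 \leq 0$, the claim $b_{x_1}^2 V_{x_1 x_2} \geq 0$ is equivalent to showing that $V_{x_1 x_2} \leq 0$; that is, $x_2 \mapsto V_{x_1}(x_1,x_2)$ is nonincreasing. I would first observe that in the decoupled dynamics \eqref{SDE dynamics uncontrolled}, the second component $X^{2,x}$ depends on the initial datum $x_2$ monotonically: by the comparison principle for one-dimensional SDEs, if $x_2 \leq x_2'$ then $X_t^{2,(x_1,x_2)} \leq X_t^{2,(x_1,x_2')}$ for all $t\geq 0$, $\mathbb{P}$-a.s., while the first component $X^{1,x}$ does not depend on $x_2$ at all.

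The key step is then to examine how the running payoff $\hat h = h_{x_1} + b_{x_1}^2 V_{x_2}$ of the Dynkin game depends on $x_2$. First, $h_{x_1}$ is nonincreasing in $x_2$ because $h_{x_1 x_2} \leq 0$ by Condition~\ref{assumption on b}. Second, $b_{x_1}^2$ is nonincreasing in $x_2$ since $b_{x_1 x_2}^2 \leq 0$, and $b_{x_1}^2 \leq 0$; and $V_{x_2}$ is nonnegative (this should follow from $h_{x_i}\geq 0$ and the structure of the problem, in the spirit of Remark~\ref{lemma appendix existence optimal controls}, since increasing $x_2$ raises costs) and nondecreasing in $x_2$ by convexity of $V$. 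Hence $b_{x_1}^2 V_{x_2}$, being a product of a nonpositive nonincreasing function with a nonnegative nondecreasing function, is nonincreasing in $x_2$. Therefore $\hat h(x_1,x_2)$ is nonincreasing in $x_2$. Composing with the monotone flow, for $x_2\leq x_2'$ one gets $\hat h(X_t^{1,x},X_t^{2,(x_1,x_2)}) \geq \hat h(X_t^{1,x},X_t^{2,(x_1,x_2')})$ pathwise, since along the first argument the two trajectories agree and along the second $X_t^{2,(x_1,x_2)}\leq X_t^{2,(x_1,x_2')}$.

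Finally I would plug this into the saddle-point representation \eqref{eq sup inf}: writing $V_{x_1}(x) = \sup_{\tau_1}\inf_{\tau_2} G(x;\tau_1,\tau_2)$, the only place $x_2$ enters $G$ is through $\hat h$ evaluated along $X^x$ (the discount rate $\hat\rho = \rho - b_1^1$ and the terminal indicator payoffs $\pm 1$ are $x_2$-independent). Since $G(x;\tau_1,\tau_2)$ is, for every fixed pair $(\tau_1,\tau_2)$, nonincreasing in $x_2$ by the pathwise inequality above, taking $\inf_{\tau_2}$ and then $\sup_{\tau_1}$ preserves monotonicity, so $V_{x_1}(x_1,x_2)$ is nonincreasing in $x_2$, i.e.\ $V_{x_1 x_2}\leq 0$ a.e., and multiplying by $b_{x_1}^2\leq 0$ yields $b_{x_1}^2 V_{x_1 x_2}\geq 0$. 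The main obstacle I anticipate is the regularity bookkeeping: $V_{x_1}$ is only $W_{loc}^{2;\infty}$, so "$V_{x_1 x_2}\leq 0$" must be read in the a.e.\ sense and justified by a difference-quotient argument from the monotonicity of $V_{x_1}$ in $x_2$; and one must be careful that $\hat h$ itself involves $V_{x_2}$, so the argument is not entirely self-contained unless the sign and monotonicity of $V_{x_2}$ are established first (either directly from the control representation, or by a simultaneous/bootstrap argument alongside the penalized approximations $V^\varepsilon$ used in Theorem~\ref{theorem connection Dynkin game PDE}). I would handle this by first recording $V_{x_2}\geq 0$ and $V_{x_2 x_2}\geq 0$ as consequences of monotonicity and convexity of $V$, then running the comparison argument.
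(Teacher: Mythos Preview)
Your proposal is correct and follows essentially the same approach as the paper's proof: reduce to $V_{x_1 x_2}\leq 0$, use the comparison principle on the uncontrolled SDE to get $X^{1,x^r}=X^{1,x}$ and $X^{2,x^r}\geq X^{2,x}$, establish $V_{x_2}\geq 0$ from the control representation and $h_{x_2}\geq 0$, combine this with $h_{x_1 x_2}\leq 0$, $b_{x_1}^2\leq 0$, $b_{x_1 x_2}^2\leq 0$ and convexity of $V$ to deduce $\hat h(X^{x^r})\leq \hat h(X^x)$ pathwise, and conclude via the saddle-point representation \eqref{eq sup inf}. The paper carries out the decomposition of $b_{x_1}^2 V_{x_2}$ explicitly along trajectories rather than first arguing monotonicity of $\hat h$ as a function, but the content is identical.
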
 
\begin{proof} 
Since $b_{x_1}^2 \leq 0$ by Condition \ref{assumption on b} in Assumption \ref{assumption}, it is enough to show that $ V_{x_1 x_2} \leq 0$.
Fix an initial condition $x \in \mathbb{R}^{2}$, take  $r > 0$, and define a new initial condition $x^r \in \mathbb{R}^{2}$ by setting   
$ 
x^r:= x + r e_2 .
$ 
Let $X^{x^r}=(X^{1,x^r},X^{2,x^r})$ be the solution to the uncontrolled dynamics \eqref{SDE dynamics uncontrolled}, with initial condition $x^r$.
By the structure we assumed on the drift, this perturbation of the initial condition will affect only the  second component of $X^{x^r}$. 
Indeed, since $x_{2}^r \geq x_{2}$, a standard comparison principle for SDE (see \cite{ikeda1977}) gives 
$ X_t^{2,x^r} -X_t^{2,x} \geq 0$ $ \text{for each } t \geq 0, \ \mathbb{P}\text{-a.s.}
$, while  $X^{1,x^r}=X^{1,x}$.
Hence, since $h_{x_1 x_2} \leq 0$, we have
\begin{equation}
    \label{eq mon h}
h_{x_1}(X_t^{x^r}) \leq h_{x_1}(X_t^{x}), \quad \text{for each } t\geq0, \ \mathbb{P}\text{-a.s.}
\end{equation}
Moreover, since $b_{x_1}^{2} \leq 0$, we can exploit the convexity of $V$  to obtain
\begin{align}
\label{eq mon b.deltaV_x2}
    b_{x_1}^{2}& (X_t^{x^r})(V_{x_{2}}(X_t^{x^r}) - V_{x_{2}}(X_t^{x})) \\ \notag
    & = b_{x_1}^{2} (X_t^{x^r})(X_t^{2,x^r}-X_r^{2,x}) \int_0^1 V_{x_{2} x_{2}} ( X_t^x + s (X_t^{x^r}-X_r^x)) ds \\ \notag & \leq 0, \quad \text{for each } t\geq0, \ \mathbb{P}\text{-a.s.} 
\end{align}
Let us now prove that $V_{x_{2}}(y) \geq 0$, for each $y \in \mathbb{R}^{2}$. Fix $y \in \mathbb{R}^{2}$ and let $v$ be an optimal control for $y$.
Observe that, for each $\delta >0$ we can still employ a comparison principle to deduce that  
$
X_t^{1,y;v} - X_t^{1,y-\delta e_{2};v}  = 0$, and $
X_t^{2,y;v} - X_t^{2,y-\delta e_{2};v} \geq 0$, for each $t\geq0, \ \mathbb{P}\text{-a.s}$.
This, since $h_{x_2} \geq0 $ and $V\in C^1 (\mathbb{R}^{2})$, in turn implies that
\begin{align} 
\label{eq V_x2 >0}
    V_{x_{2}}(y) & =\lim_{\delta \to 0} \frac{V(y)-V(y-\delta e_{2})}{\delta}\\ \notag
    & \geq \lim_{\delta \to 0} \frac{J(y;v)-J(y-\delta e_{2};v)}{\delta}\\ \notag 
    &=\lim_{\delta \to 0} \frac{1}{\delta} \mathbb{E} \bigg[ \int_0^\infty e^{-\rho t} ( h(X_t^{y;v}) -h(X_t^{y-\delta e_{2};v} )) dt \bigg] \geq 0, \notag 
\end{align}
where we have used that the control $v$ is suboptimal for the initial condition $y-\delta e_{2}$.
Hence, since $b_{x_1 x_2}^{2} \leq 0$, we obtain that
\begin{equation} 
    \label{eq mon delta b V_x2}
   ( b_{x_1}^{2}(X_t^{x^r}) -b_{x_1}^{2}(X_t^{x}) ) V_{x_{2}}(X_t^{x}) \leq 0,  \quad \text{for each } t\geq0, \ \mathbb{P}\text{-a.s.}  
\end{equation} 
Summing now the inequalities \eqref{eq mon h}, \eqref{eq mon b.deltaV_x2} and \eqref{eq mon delta b V_x2}, we find
\begin{equation}\label{eq monotonicity hat h}
h_{x_1}(X_t^{x^r})+ b_{x_1}^{2}(X_t^{x^r}) V_{x_{2}}(X_t^{x^r}) \leq h_{x_1}(X_t^{x})+ b_{x_1}^{2}(X_t^{x}) V_{x_{2}}(X_t^{x}),  \quad \text{for each } t\geq 0, \ \mathbb{P}\text{-a.s.};  
\end{equation} 
that is,   
$ \hat{h}(X^{x^r}) \leq  \hat{h}(X^{x})$.
Therefore, for each stopping time $\tau_1,\, \tau_2 \in \mathcal{T}$, we deduce that 
$$
G(x^r;\tau_1,\tau_2) \leq G(x;\tau_1,\tau_2).  
$$
Taking the supremum over $\tau_1 \in \mathcal{T}$ and the infimum over $\tau_2 \in \mathcal{T}$ in the latter inequality, we deduce, in light of \eqref{eq sup inf} in Theorem \ref{theorem Dynkin game connection}, that  
$V_{x_1}(x^r) \leq V_{x_1}(x)$. Hence,  we conclude that $V_{x_1 x_2} \leq 0$ in $\R^2$, which completes the proof of the proposition. 
\end{proof}

\subsection{Step b: Construction of $\varepsilon$-optimal policies}
\label{section epsilon optimal policies}
For every $\varepsilon > 0$ define the sets
\begin{equation*} 
\mathcal{W}_{\varepsilon} := \{ x \in \mathbb{R}^{2} \, | \, V_{x_1}^2(x) < 1-\varepsilon \}, \quad S_\varepsilon := \partial \mathcal{W}_\varepsilon.
\end{equation*} 
In light of Lemma \ref{lemma waiting region nonempty}, the set $\mathcal{W}_\varepsilon$ is clearly nonempty for $\varepsilon$ sufficiently small.

The proof of the following lemma is obtained combining arguments from \cite{kruk2000} together with the monotonicity property shown in Proposition \ref{proposition V_x1 nondecreasing in x2}.
\begin{lemma}
\label{lemma epsilon Skorokhod problems} For each $\varepsilon>0$ such that $\bar{x} \in \mathcal{W}_\varepsilon$, there exists a solution $v^\varepsilon \in \mathcal{V}$ to the (classical) Skorokhod problem for the SDE  \eqref{SDE dynamics} in  $\overline{\mathcal{W}}_\varepsilon$ starting at $\bar{x}$ with reflection direction $-{V_{x_1}}/{|V_{x_1}|}e_1$.  
\end{lemma}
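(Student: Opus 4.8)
The plan is to construct $v^\varepsilon$ as the Skorokhod reflection of the uncontrolled diffusion at the boundary $S_\varepsilon$ of the set $\mathcal{W}_\varepsilon$, in the direction $-V_{x_1}/|V_{x_1}|\,e_1$, by verifying that $\mathcal{W}_\varepsilon$ together with this reflection field satisfies the geometric conditions under which the Skorokhod problem of \cite{kruk2000} (and the associated reflected SDE) admits a solution. The first step is to record the structure of $\mathcal{W}_\varepsilon$: since $V \in W^{2;\infty}_{loc}(\mathbb{R}^2)$ and is convex, $V_{x_1}$ is continuous, so $\mathcal{W}_\varepsilon$ is open; moreover, for each fixed $z = \bar z$, the slice $\{y : V_{x_1}^2(y,z) < 1 - \varepsilon\}$ is, by the convexity of $V$ in $x_1$ (equivalently $V_{x_1}$ nondecreasing in $x_1$, using $h_{x_1 x_1} > 0$), a bounded open interval whose endpoints are precisely where $V_{x_1} = \pm\sqrt{1-\varepsilon}$. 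This gives a clean ``slab-like'' description of $\mathcal{W}_\varepsilon$: it is the region between two graphs $x_1 = \ell_\varepsilon(z)$ and $x_1 = r_\varepsilon(z)$, where $\ell_\varepsilon < r_\varepsilon$, $V_{x_1}(\ell_\varepsilon(z),z) = -\sqrt{1-\varepsilon}$ and $V_{x_1}(r_\varepsilon(z),z) = \sqrt{1-\varepsilon}$.

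The second step is to use Proposition \ref{proposition V_x1 nondecreasing in x2} to control the geometry of these two boundary pieces. The monotonicity $b_{x_1}^2 V_{x_1 x_2} \ge 0$, together with $b_{x_1}^2 \le 0$, gives $V_{x_1 x_2} \le 0$, i.e. $V_{x_1}$ is nonincreasing in $x_2$; combined with $V_{x_1}$ increasing in $x_1$ this forces $\ell_\varepsilon$ and $r_\varepsilon$ to be monotone (nondecreasing) functions of $z$, hence of bounded variation on compacts, and in particular $S_\varepsilon$ is Lipschitz-like in the relevant directions. The reflection direction on $S_\varepsilon$ is $\pm e_1$ (it is $+e_1$ on the left face $\{V_{x_1} = -\sqrt{1-\varepsilon}\}$ and $-e_1$ on the right face $\{V_{x_1} = +\sqrt{1-\varepsilon}\}$), so it is a constant unit vector on each face, pointing into $\mathcal{W}_\varepsilon$; the crucial compatibility condition is that at a point of $S_\varepsilon$ the inward normal cone and the reflection direction make an acute angle, which here follows because the faces are graphs over $z$ and the reflection is in the $x_1$-direction transverse to these graphs. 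This is exactly the configuration handled in \cite{kruk2000}, and I would cite the relevant existence result there (invoked in Appendix \ref{appendix proof propositions kruk}) after checking its hypotheses against the above description.

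The third step is to assemble the solution. Starting from $\bar x \in \mathcal{W}_\varepsilon$, one runs the reflected SDE: away from $S_\varepsilon$, $X^{\bar x; v^\varepsilon}$ follows \eqref{SDE dynamics} with $dv^\varepsilon = 0$; when $X^{1,\bar x;v^\varepsilon}$ reaches $\ell_\varepsilon(X^{2,\cdot})$ from inside, $v^\varepsilon$ increases just enough (local-time-type push) to keep $X$ in $\overline{\mathcal{W}}_\varepsilon$, and symmetrically it decreases at the right face. Since both faces are graphs over the uncontrolled coordinate $x_2$ (which is unaffected by $v^\varepsilon$), the reflection is effectively one-dimensional in $x_1$ with a moving, monotone barrier, so existence and uniqueness of a continuous $v^\varepsilon$ of locally bounded variation follow from the classical one-dimensional Skorokhod map applied pathwise (or from the cited multidimensional result), and properties (1)--(3) of Definition \ref{def Skorokhod problem} hold with (3) vacuous because $v^\varepsilon$ is continuous — hence $v^\varepsilon$ solves the classical Skorokhod problem.

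The main obstacle I anticipate is not the probabilistic construction but verifying cleanly that the reflection field $-V_{x_1}/|V_{x_1}|\,e_1$ is well-defined and continuous on $S_\varepsilon$ and satisfies the geometric admissibility condition of \cite{kruk2000} uniformly enough to invoke their theorem: one must rule out that $V_{x_1} = 0$ on $S_\varepsilon$ (immediate since $|V_{x_1}| = \sqrt{1-\varepsilon} > 0$ there) and, more delicately, handle the possible non-smoothness of $\ell_\varepsilon, r_\varepsilon$ coming only from the $W^{2;\infty}_{loc}$ (not $C^2$) regularity of $V$. This is precisely where the monotonicity of $V_{x_1}$ in $x_2$ from Proposition \ref{proposition V_x1 nondecreasing in x2} does the work — it substitutes for boundary smoothness by giving monotone (hence a.e.-differentiable, bounded variation) graphs — and where the adaptation of Kruk's argument, promised in the statement, is needed.
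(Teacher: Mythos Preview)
Your proposal contains a genuine gap and also misses the simplification that drives the paper's argument.

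The gap: you assert that ``both faces are graphs over the uncontrolled coordinate $x_2$ (which is unaffected by $v^\varepsilon$)'', and then reduce to a one-dimensional Skorokhod map with a moving barrier. But in the present dynamics $dX_t^{2,x;v}=b^2(X_t^{1,x;v},X_t^{2,x;v})\,dt+\sigma\,dW_t^2$, so $X^2$ \emph{does} depend on $X^1$ and hence on $v^\varepsilon$. The barriers $\ell_\varepsilon(X_t^2),\,r_\varepsilon(X_t^2)$ therefore depend on the very control you are constructing, and the ``pathwise one-dimensional Skorokhod map'' reduction becomes a fixed-point problem that you have not set up or solved.

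The missed simplification: your ``main obstacle'' --- that $V$ is only $W^{2;\infty}_{loc}$ --- is not an obstacle here. Inside $\mathcal{W}$ the value function solves the linear uniformly elliptic equation $\rho V-\mathcal{L}V=h$, so interior elliptic regularity gives $V\in C^4(\mathcal{W})$. The paper differentiates this equation twice in $x_1$ to obtain an equation for $V_{x_1 x_1}$ whose right-hand side is $h_{x_1 x_1}+2b_{x_1}^2 V_{x_1 x_2}+b_{x_1 x_1}^2 V_{x_2}$. Proposition~\ref{proposition V_x1 nondecreasing in x2} (together with $h_{x_1 x_1}>0$ and $b_{x_1 x_1}^2 V_{x_2}\ge 0$) makes this strictly positive, and the strong maximum principle then yields $V_{x_1 x_1}>0$ throughout $\mathcal{W}$. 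With $w:=V_{x_1}^2\in C^3(\mathcal{W})$ and $\partial_{e_1}\sqrt{w}=V_{x_1 x_1}\ne 0$ on $S_\varepsilon$, the implicit function theorem makes $S_\varepsilon$ a $C^3$ hypersurface; the reflection field $-V_{x_1}/|V_{x_1}|\,e_1$ is smooth, non-tangential (cf.\ Lemma~2.7 in \cite{kruk2000}), and inward-pointing. The existence of $v^\varepsilon$ then follows from (an unbounded-domain version of) Theorem~4.4 in \cite{lions&sznitman1984}, not from a result in \cite{kruk2000} or Appendix~\ref{appendix proof propositions kruk}.

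In short, the role of the monotonicity from Proposition~\ref{proposition V_x1 nondecreasing in x2} is not to substitute for boundary regularity via monotone graphs, but to force strict convexity $V_{x_1 x_1}>0$ in $\mathcal{W}$, which in turn delivers genuine $C^3$ smoothness of $S_\varepsilon$ and allows a direct appeal to classical reflected-SDE theory.
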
 

\begin{proof} 
Fix $\varepsilon>0$ such that $\bar{x} \in \mathcal{W}_\varepsilon$. In order to employ the results of \cite{lions&sznitman1984} to construct $v^\varepsilon$ as the solution of the Skorokhod problem with reflection along $S_\varepsilon$, we first show that $S_\varepsilon$ is a $C^{3}$ hypersurface.
 
To this end, we begin the proof by showing that 
\begin{equation}
    \label{eq V_x1 x1 >0 }
    V_{x_1 x_1} (x)>0, \quad \text{for each }x\in \mathcal{W}.
\end{equation} 
Take indeed $x\in \mathcal{W}$ and  $\delta >0$ such that $B_\delta (x) \subset \mathcal{W}$. Since $V$ solves the linear equation $\rho V-\mathcal{L}V=h$ in $\mathcal{W}$, from Theorem 6.17 at p.\ 109 in \cite{gilbarg2001} it follows that $V \in C^{4}(\mathcal{W})$. 
Therefore, we can differentiate two times with respect to $x_1$ the HJB equation \eqref{HJB v.i.}, and obtain an equation for $V_{x_1 x_1}$
\begin{equation}
\label{eq HJB secord derivative}
   (\rho- 2 b_{1}^1) V_{x_1 x_1} - \mathcal{L} V_{x_1 x_1} = h_{x_1 x_1}  + 2 b_{x_1}^2 V_{x_1 x_2} + b_{x_1 x_1}^2 V_{x_2}, \quad \text{in } B_\delta (x).
\end{equation} 
Since by assumption $h_{x_1 x_1}>0$, thanks to Proposition \ref{proposition V_x1 nondecreasing in x2} we have that $ h_{x_1 x_1}  + 2 b_{x_1}^2 V_{x_1 x_2}>0$. 
By the inequality \eqref{eq V_x2 >0} in the proof of Proposition \ref{proposition V_x1 nondecreasing in x2}, and the fact that  $b^2$ is convex, we deduce that $b_{x_1 x_1}^2 V_{x_2}\geq 0$. 
Therefore, the right hand side of \eqref{eq HJB secord derivative} is positive. 
Next, by the strong maximum principle (see Theorem 3.5 at p.\ 35 in \cite{gilbarg2001}), $V_{x_1 x_1}$ cannot achieve a nonpositive local  minimum in $B_\delta (x)$, unless it is constant. If $V_{x_1 x_1}$ is constant in $B_\delta (x)$, then by \eqref{eq HJB secord derivative} we obtain $V_{x_1 x_1}>0$ as desired. If $V_{x_1 x_1}$ attains its minimum at the boundary $\partial B_\delta (x)$, then by convexity of $V$ we still have 
$$  
V_{x_1 x_1}(y) > \min_{\partial B_\delta (x)} V_{x_1 x_1} \geq 0, \quad \text{for each } y \in B_\delta(x),
$$
which also proves \eqref{eq V_x1 x1 >0 } 

Next, define $\bar{\nu} (x):= V_{x_1}(x) / |V_{x_1}(x)| e_1$ for each $x \in S_\varepsilon$, and $w(y) := |V_{x_1}(y)|^2$ for each $y \in \mathcal{W}$. Notice that
$ \sqrt{w(y)}= |\partial_{\bar{\nu}} V(y) | $. 
{For $R>0$, by compactness of $\overline{\mathcal{W}}_{\varepsilon/2}^\text{\tiny{$R$}}:=\overline{\mathcal{W}}_{\varepsilon/2} \cap \overline{B}_R$, in light of \eqref{eq V_x1 x1 >0 } we can find a constant $c_\varepsilon^\text{\tiny{$R$}} >0$ such that
\begin{equation} 
\label{eq inf V_x1 x1}
    \inf_{x \in \overline{\mathcal{W}}_{\varepsilon/2}^\text{\tiny{$R$}}} V_{x_1 x_1}(x)\geq c_{\varepsilon }^\text{\tiny{$R$}}>0.
\end{equation}
Therefore, for $x \in S_\varepsilon$ and $R$ large enough, by  \eqref{eq inf V_x1 x1}, we have  
\begin{equation*} 
    \sqrt{w(x+\lambda {\bar{\nu}})} =| \partial_{\bar{\nu}} V (x+\lambda {\bar{\nu}}) | \geq  \partial_{\bar{\nu}} V (x+\lambda {\bar{\nu}}) \geq \partial_{\bar{\nu}} V(x) + \lambda c_\varepsilon^\text{\tiny{$R$}} /2 = \sqrt{w(x)} +  \lambda c_\varepsilon^\text{\tiny{$R$}} /2,  
\end{equation*}
and hence 
\begin{equation}
\label{eq gradient non tangential}
\partial_{\bar{\nu}} \sqrt{w(x)} \geq c_\varepsilon^\text{\tiny{$R$}} /2.
\end{equation} 
It thus follows that $\partial_{\bar{\nu}} w \ne 0$ on $S_\varepsilon$. This implies, by the implicit function theorem%(see, e.g., Appendix C.7 of \cite{evans2010})
,  that $S_\varepsilon$ is a $C^{3}$-hypersurface.  \\ \indent
Now, by \eqref{eq gradient non tangential}, arguing as in Lemma 2.7 in \cite{kruk2000}, we have that the vector $-{\bar{\nu}}$ is not tangential to $S_\varepsilon$, and, by definition of $\mathcal{W}_\varepsilon$ and of ${\bar{\nu}}$, we observe that the vector $-{\bar{\nu}}$ points inside $\mathcal{W}_\varepsilon$. Therefore, we can employ a version of Theorem 4.4 in \cite{lions&sznitman1984} for unbounded domains}  in order to find a solution $v^\varepsilon \in \mathcal{V}$ to the Skorokhod problem for the SDE  \eqref{SDE dynamics} in  $\overline{\mathcal{W}}_\varepsilon$ starting at $\bar{x}$, with reflection direction $-{V_{x_1}}/{|V_{x_1}|}e_1$. 
\end{proof} 

We conclude this section with the following lemma. We omit its proof since this can be established as in the proof of Lemma 2.8 in \cite{kruk2000}.
\begin{lemma}
\label{lemma epsilon optimality}
For each $\bar{x} \in \mathcal{W}$ and $\varepsilon>0$ such that $\bar{x}\in \mathcal{W}_\varepsilon$, let the control $v^\varepsilon$ be as in Lemma \ref{lemma epsilon Skorokhod problems}. Then $J(\bar{x}; v^\varepsilon) \to V(\bar{x})$ as $\varepsilon \to 0$. 
\end{lemma}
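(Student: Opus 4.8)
The plan is to establish the $\varepsilon$-optimality of the reflected policies $v^\varepsilon$ via a two-sided estimate: a lower bound $J(\bar x;v^\varepsilon)\geq V(\bar x)$ which is trivial, and an upper bound $\limsup_{\varepsilon\to 0}J(\bar x;v^\varepsilon)\leq V(\bar x)$, which is the substance. For the upper bound I would apply a (generalized) It\^o formula to $e^{-\rho t}V(X^{\bar x;v^\varepsilon}_t)$, using that $V\in W^{2;\infty}_{loc}(D)$, the $C^3$-regularity of $S_\varepsilon$ from Lemma \ref{lemma epsilon Skorokhod problems}, and that along the trajectory $X^{\bar x;v^\varepsilon}$ stays in $\overline{\mathcal W}_\varepsilon\subset\overline{\mathcal W}$, where $\rho V-\mathcal L V=h$ holds a.e.; the singular part of $v^\varepsilon$ acts only on $S_\varepsilon$ in the direction $-V_{x_1}/|V_{x_1}|\,e_1$, so the contribution of the control term to the It\^o expansion is $\int e^{-\rho t}V_{x_1}(X^{\bar x;v^\varepsilon}_{t})\,dv^\varepsilon_t$, and since on $S_\varepsilon$ one has $|V_{x_1}|=\sqrt{1-\varepsilon}$ and the sign of $V_{x_1}$ matches $\gamma$, this equals $-\sqrt{1-\varepsilon}\,d|v^\varepsilon|_t$. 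Rearranging yields
\begin{equation*}
V(\bar x)=\mathbb E\Big[\int_0^\infty e^{-\rho t}h(X^{\bar x;v^\varepsilon}_t)\,dt+\sqrt{1-\varepsilon}\int_{[0,\infty)}e^{-\rho t}\,d|v^\varepsilon|_t\Big]+(\text{transversality remainder}),
\end{equation*}
so that $J(\bar x;v^\varepsilon)\leq V(\bar x)/\sqrt{1-\varepsilon}+o(1)$ up to controlling the running-cost and total-variation integrability, giving $\limsup_\varepsilon J(\bar x;v^\varepsilon)\leq V(\bar x)$.

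To make this rigorous I would first record the a priori estimates: using the conditions on $\rho$ in Assumption \ref{assumption}\eqref{ass discount factor} and the growth bound $h(x)\geq \kappa_1|x_1|^p-\kappa_2$, together with the moment estimates on the controlled trajectories (obtained, as in the proof of Theorem \ref{theorem V sol HJB}, by applying It\^o to $e^{-\rho t}|X^{\bar x;v^\varepsilon}_t|^{p}$-type functions), one gets a bound on $\mathbb E\big[\int_0^\infty e^{-\rho t}(1+|X^{\bar x;v^\varepsilon}_t|^p)\,dt\big]$ and on $\mathbb E\big[\int_{[0,\infty)}e^{-\rho t}\,d|v^\varepsilon|_t\big]$ that is uniform in $\varepsilon$ (bootstrapping from $J(\bar x;v^\varepsilon)$ being finite for $\varepsilon$ small, say $\leq 2V(\bar x)$). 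These uniform bounds are what let one discard the transversality/remainder terms in the It\^o expansion by a standard localization-and-dominated-convergence argument, and they also ensure the quantities in the displayed identity are finite.

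Then I would pass to the limit $\varepsilon\to 0$. The factor $\sqrt{1-\varepsilon}\to 1$ handles the total-variation term, and the remaining point is lower semicontinuity / continuity of $v\mapsto J(\bar x;v)$ along the family $\{v^\varepsilon\}$: since $\overline{\mathcal W}_\varepsilon\uparrow\overline{\mathcal W}$ and the reflection directions converge, one argues (exactly as in Lemma 2.8 of \cite{kruk2000}) that $X^{\bar x;v^\varepsilon}\to X^{\bar x;\bar v}$ in an appropriate sense and $|v^\varepsilon|\to|\bar v|$, whence $J(\bar x;v^\varepsilon)\to J(\bar x;\bar v)$; combined with $\limsup_\varepsilon J(\bar x;v^\varepsilon)\leq V(\bar x)\leq J(\bar x;\bar v)$ and the elementary $J(\bar x;v^\varepsilon)\geq V(\bar x)$, one concludes $J(\bar x;v^\varepsilon)\to V(\bar x)$.

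The main obstacle I anticipate is not the It\^o computation itself but the uniform-in-$\varepsilon$ integrability of the running cost and of the total variation of $v^\varepsilon$: one must leverage the discount-factor conditions to get moment estimates on the reflected diffusions that do not blow up as the approximating domain grows, and simultaneously control $\mathbb E\int e^{-\rho t}d|v^\varepsilon|_t$ without a priori knowing $v^\varepsilon$ is close to optimal — this is precisely the delicate point handled in \cite[Lemma 2.8]{kruk2000}, which is why I would cite that argument rather than reproduce it, noting only that the extra convex-drift terms are absorbed using the monotonicity from Proposition \ref{proposition V_x1 nondecreasing in x2} and the sign conditions in Assumption \ref{assumption}.
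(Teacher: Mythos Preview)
Your first paragraph contains the entire proof, and it is exactly the paper's argument: apply It\^o to $e^{-\rho t}V(X^{\bar x;v^\varepsilon}_t)$, use that $\rho V-\mathcal L V=h$ on $\mathcal W_\varepsilon\subset\mathcal W$ and that $d|v^\varepsilon|$ is supported on $S_\varepsilon$ where $|V_{x_1}|=\sqrt{1-\varepsilon}$, to obtain the identity
\[
V(\bar x)=\mathbb E\Big[\int_0^\infty e^{-\rho t}h(X^{\bar x;v^\varepsilon}_t)\,dt+\sqrt{1-\varepsilon}\int_{[0,\infty)}e^{-\rho t}\,d|v^\varepsilon|_t\Big].
\]
From this and $J(\bar x;v^\varepsilon)\geq V(\bar x)$ the squeeze $V(\bar x)\leq J(\bar x;v^\varepsilon)\leq V(\bar x)/\sqrt{1-\varepsilon}+O(\varepsilon)$ already yields $J(\bar x;v^\varepsilon)\to V(\bar x)$. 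The localization needed to justify It\^o on $[0,\infty)$ is standard and uses only the polynomial growth of $V$; no uniform-in-$\varepsilon$ estimate is required here.

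Your third paragraph, however, introduces a genuine circularity. You propose to argue that $X^{\bar x;v^\varepsilon}\to X^{\bar x;\bar v}$ and $|v^\varepsilon|\to|\bar v|$, and then deduce $J(\bar x;v^\varepsilon)\to J(\bar x;\bar v)$. But in this paper (as in \cite{kruk2000}) that convergence is established \emph{later}, in Lemma~\ref{lemma construction optimal policy}, and its proof uses the present lemma as input (it needs $(v^\varepsilon)$ to be a minimizing sequence). Invoking it here would be circular. Moreover, Lemma~2.8 in \cite{kruk2000} is precisely the $\varepsilon$-optimality statement you are proving; it does \emph{not} establish convergence of the reflected processes. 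Your ``main obstacle'' is also overstated: the displayed identity itself, combined with the lower bound $h\geq-\kappa_2$, gives directly $\sqrt{1-\varepsilon}\,\mathbb E\!\int e^{-\rho t}d|v^\varepsilon|_t\leq V(\bar x)+\kappa_2/\rho$, which is all the uniformity you need; no separate moment estimate, and no appeal to Proposition~\ref{proposition V_x1 nondecreasing in x2}, is required. Delete paragraphs three and four and the proof is complete.
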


\subsection{Step c: Characterization of the optimal control}\label{section proof of the main results}  
Thanks to the results of Subsections \ref{section a connection with Dynkin games} and \ref{section epsilon optimal policies} we can now prove Theorem \ref{theorem main characterization}. We provide a separate proof for each of the two claims.      
\subsubsection{Proof of Claim \ref{theorem x inside waiting}}\label{section proof theorem x inside waiting} 
We will first prove Claim \ref{theorem x inside waiting} for $\bar{x}\in {\mathcal{W}}$, and then, at the end of this subsection, we will give a proof for a general  $\bar{x}\in \overline{\mathcal{W}}$.
Fix $\bar{x}\in {\mathcal{W}}$ and a sequence $(\varepsilon_n)_{n\in \mathbb{N}}$ converging to zero. To simplify the notation, according to Lemma \ref{lemma epsilon Skorokhod problems} we define the processes 
$$
X^n:=X^{\bar{x};v^{\varepsilon_n}}, \ v^n:=v^{\varepsilon_n}, \  \xi^n:= | v^{\varepsilon_n}|, \quad \text{for each } n \in \mathbb{N}.
$$  
Bear in mind that the processes $v^n$ and $\xi^n$ depend on the initial condition $\bar{x}$, and that, according to Lemma \ref{lemma epsilon optimality}, the sequence of controls $(v^n)_{n\in \mathbb{N}}$ is a minimizing sequence; that is, $J(\bar{x}; v^n) \to V(\bar{x})$ as $n \to \infty$.

We begin with the following estimate. 
\begin{lemma}\label{lemma estimate SDE}
Let $p':=(2 p-1)/2$. We have
\begin{equation*}   
\sup_n \int_0^\infty e^{-\rho t} (\mathbb{E} [ |X_t^{1,n}|^{p} ] +\mathbb{E} [ |X_t^{n}|^{{p'}} ] )dt   \leq C (1+|\bar{x}|^p).  
\end{equation*}   
\end{lemma}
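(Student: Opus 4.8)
The plan is to obtain an a priori moment estimate on the controlled state process that is uniform over the minimizing sequence $(v^n)_n$, by exploiting the fact that the costs $J(\bar x; v^n)$ are uniformly bounded (they converge to $V(\bar x)$, hence $\sup_n J(\bar x; v^n) =: M < \infty$). The running cost satisfies the coercivity bound $\kappa_1 |x_1|^p - \kappa_2 \le h(x)$ from Condition \ref{assumption on h}, so $\int_0^\infty e^{-\rho t}\mathbb E[|X_t^{1,n}|^p]\,dt$ is immediately controlled: indeed $\kappa_1\int_0^\infty e^{-\rho t}\mathbb E[|X_t^{1,n}|^p]\,dt \le \mathbb E[\int_0^\infty e^{-\rho t} h(X_t^n)\,dt] + \kappa_2/\rho \le M + \kappa_2/\rho \le C(1+|\bar x|^p)$ (the last inequality holding because $V(\bar x)$ itself grows at most like $|\bar x|^p$, which follows from the estimate $V(\bar x) \le J(\bar x; 0) = \mathbb E[\int_0^\infty e^{-\rho t} h(X^{\bar x}_t)\,dt]$ together with the growth of $h$ and standard SDE moment bounds for the uncontrolled dynamics). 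This disposes of the first term.

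The second term, involving $\mathbb E[|X_t^n|^{p'}]$ with $p' = (2p-1)/2 < p$, requires controlling the full vector, hence also the components $X^{i,n}$ for $i \ge 2$, which are \emph{not} directly acted upon by $v^n$ but are coupled to $X^{1,n}$ through the drifts $b^i(X^{1,n}, X^{i,n})$. Here I would apply It\^o's formula (or the Dynkin-type formula for the dissipative functional $x \mapsto |x|^{p'}$, or more conveniently $(1+|x|^2)^{p'/2}$ which is $C^2$) to $e^{-\rho t}(1+|X^n_t|^2)^{p'/2}$. The $dv^n$ term contributes, after integration, something bounded by $\mathbb E[\int_{[0,\infty)} e^{-\rho t}(1+|X^{n}_{t-}|^2)^{(p'-1)/2}\,d|v^n|_t]$; since $p'-1 = (2p-3)/2 \le p-1$ and the total variation $\mathbb E[\int e^{-\rho t}\,d|v^n|_t] \le M$, Young's inequality splits this into a small multiple of $\int e^{-\rho t}\mathbb E[|X^{1,n}_t|^p]\,dt$ (already bounded) plus a constant. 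The drift and second-order terms produce, using the linear growth $|\bar b(x)| \le \bar L(1+|x|)$ from Condition \ref{assumption on b} and $|\bar\sigma(y)| \le \sigma(1+|y|)$, an expression of the form $\int_0^\infty e^{-\rho t}\mathbb E[\,(\text{const} - \tfrac{\rho}{2})\,|X^n_t|^{p'} + \text{const}\cdot|X^n_t|^{p'-1} + \text{const}\,]\,dt$; the point of the discount-factor conditions $\rho > 3\rho^*\bar L$ (resp. $\rho > 2\rho^*(\bar L + \sigma^2(\rho^*-1))$) in Condition \ref{ass discount factor} is precisely to make the coefficient of $|X^n_t|^{p'}$ strictly negative, after accounting for $p'(p'-1)$-type factors from the Hessian of $(1+|x|^2)^{p'/2}$ — note $\rho^* = p(2p-1) = 2p p'$ bounds the relevant combinatorial constant. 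Absorbing the negative term and using Gr\"onwall/a standard fixed-point-in-the-space-of-bounded-functions argument then yields $\int_0^\infty e^{-\rho t}\mathbb E[|X^n_t|^{p'}]\,dt \le C(1+|\bar x|^p)$ uniformly in $n$.

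The remaining care is in making the localization rigorous: a priori we do not know $\mathbb E[|X^n_t|^{p'}] < \infty$, so I would first apply It\^o's formula on a localized domain with stopping times $\tau_k = \inf\{t : |X^n_t| \ge k\}$, derive the inequality for the stopped process, and then let $k \to \infty$ using Fatou's lemma on the left and monotone/dominated convergence on the bounded right-hand side. The strong solution to \eqref{SDE dynamics} exists and has finite moments on finite horizons by the Lipschitz assumptions on $\bar b$ and $\bar\sigma$ together with the finite variation of $v^n$, which justifies all the stochastic-integral manipulations. The main obstacle is bookkeeping: tracking the exact constants so that the discount-factor inequalities in Condition \ref{ass discount factor} genuinely suffice to close the estimate, and handling the cross-coupling of the components cleanly so that the coercivity in $x_1$ alone (the only variable directly penalized by $h$) propagates to a full-vector bound — this is exactly where the monotonicity assumptions $b^i_{x_1} \le 0$, $h_{x_i} \ge 0$ and the linear-growth bound on $\bar b$ are used. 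Since the argument is essentially the one behind the semiconcavity estimates referenced in Remark \ref{remark role of assumption} and the proof of Theorem \ref{theorem V sol HJB} in the appendix, I would expect the authors to either reproduce that computation or cite it.
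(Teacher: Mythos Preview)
Your treatment of the first term is correct and matches the paper: the coercivity $\kappa_1|x_1|^p - \kappa_2 \le h(x)$ together with $\sup_n J(\bar x; v^n) \le C(1+|\bar x|^p)$ (itself deduced from $V(\bar x)\le J(\bar x;0)$ and standard moment bounds for the uncontrolled SDE) immediately gives $\sup_n\int_0^\infty e^{-\rho t}\mathbb E[|X_t^{1,n}|^p]\,dt \le C(1+|\bar x|^p)$.

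The second part, however, has a genuine gap. Applying It\^o to $e^{-\rho t}(1+|X_t^n|^2)^{p'/2}$ produces a bounded-variation term of the form
\[
\int_{[0,\infty)} e^{-\rho t}\, p'\,(1+|X_{t-}^n|^2)^{(p'-2)/2} X_{t-}^{1,n}\,dv_t^n,
\]
which is controlled only by $\int e^{-\rho t}(1+|X_{t-}^n|)^{p'-1}\,d|v^n|_t$. There is no Young's inequality that converts this into a multiple of $\int e^{-\rho t}\mathbb E[|X_t^{1,n}|^p]\,dt$ plus a constant: the integrand involves the \emph{full} vector $|X_t^n|$ (not just the first component), and it is integrated against the singular measure $d|v^n|_t$, not $dt$. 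Splitting $(1+|X_t^n|)^{p'-1}\cdot 1$ via Young with respect to $d|v^n|_t$ only yields $\epsilon\int e^{-\rho t}(1+|X_t^n|)^{p'}\,d|v^n|_t + C_\epsilon M$, and the first term is worse than what you started with. No absorption into the $dt$-integral on the left is possible.

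The paper sidesteps this entirely by exploiting the component structure of \eqref{SDE dynamics}: the control $dv_t$ enters \emph{only} the first equation, so the dynamics of $X^{i,n}$ for $i\ge 2$ contain no $dv$ term at all. One therefore estimates $X^{2,n}$ directly via Gr\"onwall/BDG on its own SDE, obtaining a pointwise-in-time bound
\[
\mathbb E[|X_t^{2,n}|^{p'}] \le C e^{p'\bar L t}\Big(1+|\bar x|^{p'} + p_t + p_t\int_0^t \mathbb E[|X_s^{1,n}|^{p'}]\,ds\Big),
\]
for a polynomial $p_t$. Integrating against $e^{-\rho t}$, using H\"older to pass from $|X^{1,n}|^{p'}$ to $|X^{1,n}|^{p}$, and invoking the discount condition $\rho > 3\rho^*\bar L$ (which ensures $p'\bar L - \rho(1-p'/p) < 0$) closes the estimate. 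Note also that the sign conditions $b^i_{x_1}\le 0$, $h_{x_i}\ge 0$ play no role here; only the linear growth of $\bar b$ is used.
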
  

\begin{proof}
Denoting by $X^{\bar{x}}$ the solution to \eqref{SDE dynamics uncontrolled},  a standard use of Gr\"onwall's inequality and of Burkh\"older-Davis-Gundy's inequality leads to the classical estimate 
$$
\mathbb{E}[|X_t^{\bar{x}}|^p] \leq C e^{p\, \bar{L} t}(1+|\bar{x}|^p) \quad \text{for each $t\geq0$,} 
$$  
where $\bar{L}$ is the Lipschitz constant of $\bar{b}$ and $C >0$ is a generic constant.
Therefore, since the control constantly equal to zero is not necessarily optimal for $\bar{x}$, from the latter estimate and the growth rate of $h$ we obtain
\begin{align*}
V(\bar{x}) \leq \mathbb{E} \bigg[ \int_0^\infty e^{-\rho t} h(X_t^{\bar{x}}) dt \bigg] &\leq C \int_0^\infty e^{-\rho t}(1+ \mathbb{E}[|X_t^{\bar{x}}|^p])dt \\
& \leq  C \int_0^\infty e^{ -( \rho-p\bar{L})t}(1+|\bar{x}|^p)dt  \leq  C (1+|\bar{x}|^p),
\end{align*} 
where we have used that, by Condition \ref{ass sigma constant} in Assumption \ref{assumption}, $\rho> p \, \bar{L}$.
Therefore, since $v^n$ is a minimizing sequence, for all $n$ big enough we find the estimate
\begin{equation*}
\kappa_1 \int_0^\infty e^{-\rho t} \mathbb{E} [ |X_t^{1,n}|^{p} ] dt - \kappa_2 \leq J(\bar{x};v^n)  \leq C (1+|\bar{x}|^p),  
\end{equation*}
from which   
\begin{equation}\label{eq estimate xi n}
 \int_0^\infty e^{-\rho t} \mathbb{E} [ |X_t^{1,n}|^{p} ] dt   \leq C ( 1+|\bar{x}|^p ).
\end{equation}

Next, using again Gr\"onwall's inequality and  Burkh\"older-Davis-Gundy's inequality, we find
$$
\mathbb{E}[|X_t^{2,n}|^{p'}] \leq C e^{p' \, \bar{L} t} \bigg( 1+|\bar{x}|^{p'}+p_t + p_t \int_0^t \mathbb{E}[|X_s^{1,n}|^{p'}] ds \bigg), \quad \text{for each $t\geq0$,} 
$$ 
where $p_t$ is a suitable (deterministic) polynomial in $t$, not depending on $n$. 
{Therefore, we can write
\begin{align*}
   \int_0^\infty e^{-\rho t} \mathbb{E}[|X_t^{2,n}|^{p'}] dt & \leq  C \int_0^\infty e^{(p'\bar{L}-\rho) t}(1+|\bar{x}|^{p'}+p_t) dt \\ \notag
   & \quad +C \int_0^\infty e^{[p'\bar{L}-\rho (1-p'/p)] t} p_t \int_0^t e^{-\rho (p'/p) s } \mathbb{E}[|X_s^{1,n}|^{p'}] ds dt. \notag
\end{align*}  
Also, by using  H\"older's inequality with exponent $q=p/p'$, and denoting by $q^*$ the conjugate of $q$, we obtain
$$
\int_0^t e^{-\rho (p'/p) s } \mathbb{E}[|X_s^{1,n}|^{p'}] ds 
\leq \bigg( \int_0^t 1 ds \bigg)^{\frac{1}{q^*}} \bigg( \int_0^t e^{-\rho  s} \mathbb{E}[|X_s^{1,n}|^{p}] ds \bigg)^{ \frac{1}{q}}
= t ^{\frac{1}{q^*}} \bigg( \int_0^t e^{-\rho  s} \mathbb{E}[|X_s^{1,n}|^{p}] ds \bigg)^{ \frac{p'}{p}},
$$ 
so that
\begin{align}\label{eq estimate for X2}
   \int_0^\infty e^{-\rho t} \mathbb{E}[|X_t^{2,n}|^{p'}] dt 
   & \leq  C \int_0^\infty e^{(p'\bar{L}-\rho) t}(1+|\bar{x}|^{p'}+p_t) dt \\ \notag
   & \quad +C \int_0^\infty e^{[p'\bar{L}-\rho (1-p'/p)] t}\, p_t\, t ^{\frac{1}{q^*}} \bigg( \int_0^\infty e^{-\rho  s} \mathbb{E}[|X_s^{1,n}|^{p}] ds \bigg)^{ \frac{p'}{p}} dt.  \notag 
\end{align}
} 
After noticing that Condition \ref{ass sigma constant} in Assumption \ref{assumption} implies  $p'\bar{L}-\rho<0$ and $p'\bar{L}-\rho (1-p'/p)<0$, using  \eqref{eq estimate xi n} in \eqref{eq estimate for X2}, we conclude that
\begin{equation*}
    \sup_n \int_0^\infty e^{-\rho t} \mathbb{E}[|X_t^{2,n}|^{p'}] dt \leq  C (1+|\bar{x}|^p),
\end{equation*}
which, together with \eqref{eq estimate xi n} {and the fact that $p'<p$,} completes the proof of the lemma.  
\end{proof}

\begin{lemma}
\label{lemma construction optimal policy} Let $\bar{v}\in \mathcal{V}$ be the unique optimal control for $\bar{x}$. We have that
$$ 
X_t^n \to X_t^{\bar{x};\bar{v}} \quad \text{and} \quad v^n \to \bar{v}, \quad \mathbb{P}\otimes dt\text{-a.e.\ in } \Omega \times [0,\infty), \quad \text{as } n\to \infty. 
$$
%where $\bar{v}\in \mathcal{V}$ is the unique optimal control for $\bar{x}$.
\end{lemma}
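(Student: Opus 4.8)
The plan is to exploit the convexity of the problem, which forces the minimizing sequence $(v^n)$ to converge to the unique optimal control $\bar v$, and then to transfer this convergence to the trajectories via the continuity of the controlled SDE flow with respect to the control. First I would record a uniform total-variation bound: since $J(\bar x; v^n) \to V(\bar x) < \infty$ and the running-cost term is nonnegative, the quantity $\mathbb{E}\big[\int_{[0,\infty)} e^{-\rho t} d|v^n|_t\big]$ is bounded uniformly in $n$; combined with Lemma \ref{lemma estimate SDE} this yields enough compactness. The key structural input is that, because $\bar b$ is convex, the $b^i$ are convex, $h$ is convex, $h_{x_i}\geq 0$ and the sign conditions of Assumption \ref{assumption}\eqref{assumption on b} hold, the map $(x,v)\mapsto J(x;v)$ is convex (as recalled in Remark \ref{lemma appendix existence optimal controls}); hence for the convex combination $w^n := \tfrac12 v^n + \tfrac12 \bar v$ one has $J(\bar x; w^n) \leq \tfrac12 J(\bar x; v^n) + \tfrac12 V(\bar x) \to V(\bar x)$, so $(w^n)$ is also minimizing. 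Using the strict convexity of $h$ in $x_1$ (the condition $h_{x_1x_1}>0$), a parallelogram-type inequality then forces $X^{1,w^n}_t - \tfrac12 X^{1,n}_t - \tfrac12 X^{1,\bar x;\bar v}_t \to 0$ in the appropriate $L^p(\mathbb{P}\otimes e^{-\rho t}dt)$ sense, and since $X^{1,w^n} = \tfrac12 X^{1,n} + \tfrac12 X^{1,\bar x;\bar v} + (\text{a term linear in } v^n - \bar v)$, one deduces that $v^n \to \bar v$ in that topology; passing to a subsequence gives $\mathbb{P}\otimes dt$-a.e. convergence. (This is the argument that, as the authors note, "works for any sequence of controls minimizing the cost functional $J$.")

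Concretely, I would proceed as follows. Step 1: establish the uniform bound $\sup_n \mathbb{E}\big[\int_{[0,\infty)} e^{-\rho t} d|v^n|_t\big] \leq C(1+|\bar x|^p)$ from $J(\bar x; v^n)\to V(\bar x)$ and Lemma \ref{lemma estimate SDE}. Step 2: define $w^n = \tfrac12(v^n + \bar v)$ and, using convexity of the state dynamics in $v$ together with convexity of $h$ and of the $b^i$ (via the comparison principle for SDEs, Lemma 3.1 in \cite{Boetius&Kohlmann98}), show $J(\bar x; w^n) \leq \tfrac12 J(\bar x; v^n) + \tfrac12 V(\bar x)$. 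Step 3: use the semiconcavity/strict-convexity estimate coming from $\lambda h(x) + (1-\lambda)h(y) - h(\lambda x + (1-\lambda)y) \geq c\,\lambda(1-\lambda)|x_1 - y_1|^2$ near any bounded region (a consequence of $h_{x_1x_1}>0$ together with the coercive lower bound on $h$) to turn $J(\bar x;v^n) + J(\bar x;\bar v) - 2J(\bar x; w^n) \to 0$ into
\[
\int_0^\infty e^{-\rho t}\,\mathbb{E}\big[\,|X^{1,n}_t - X^{1,\bar x;\bar v}_t|^2\,\big]\,dt \longrightarrow 0 .
\]
Step 4: since the first component solves a linear (in the control) SDE, $X^{1,n} - X^{1,\bar x;\bar v}$ is, up to the drift/diffusion contraction handled by Grönwall, essentially $v^n - \bar v$ plus an error controlled by the same quantity; this yields $v^n \to \bar v$ in $L^2(\mathbb{P}\otimes e^{-\rho t}dt)$ (or convergence of $|v^n - \bar v|$ in the relevant sense). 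Step 5: feed this back into the SDE \eqref{SDE dynamics} for all components, using the Lipschitz bounds of Assumption \ref{assumption}\eqref{assumption on b}, Grönwall, and Burkholder--Davis--Gundy to get $X^n \to X^{\bar x;\bar v}$ in the same topology; finally extract a subsequence along which both convergences hold $\mathbb{P}\otimes dt$-a.e., and note that since the limit is the same ($\bar v$ unique) for every subsequence, the whole sequence converges.

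The main obstacle I anticipate is Step 3--4: converting the "energy gap" $J(\bar x;v^n) + J(\bar x;\bar v) - 2J(\bar x;w^n) \to 0$ into genuine convergence of the controls. The running-cost part only controls the difference of the \emph{first} state components (and only where the trajectories live in a bounded set, so a localization/truncation argument with the moment bounds of Lemma \ref{lemma estimate SDE} is needed to handle the tails), while one also has to argue that the total-variation terms do not create additional slack — i.e. that $|w^n| \leq \tfrac12|v^n| + \tfrac12|\bar v|$ with the deficit controlled, which requires care because total variation is only subadditive. A clean way around this is to note that $d|v^n|\, $ pairs with $|V_{x_1}(X^n)|$ which is bounded by $1$, so the cost term is comparable to $\mathbb{E}[\xi^n_\infty$-type quantities$]$, and the strict convexity of $h$ in $x_1$ alone suffices to pin down $X^{1,n}$, after which the linear structure of the first SDE recovers $v^n$. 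I would also need to check that these limit objects indeed satisfy \eqref{SDE dynamics} and that $\bar v \in \mathcal V$, but these are routine once the a.e. convergence is in hand.
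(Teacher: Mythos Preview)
Your approach is essentially correct and closely parallels the paper's, with one structural difference worth noting. The paper argues via a Cauchy-in-measure argument: it takes convex combinations $\lambda v^n + (1-\lambda) v^m$ of two elements of the minimizing sequence, uses the comparison principle (exactly as you describe) to get $J(\bar x;\lambda v^n+(1-\lambda)v^m)\le \lambda J(\bar x;v^n)+(1-\lambda)J(\bar x;v^m)$ minus a strictly positive defect coming from strict convexity of $h$ in $x_1$, localized to $\{|X^n|\le M,\,|X^m|\le M\}$ via Chebyshev and Lemma~\ref{lemma estimate SDE}, and derives a contradiction to $(X^n)$ not being Cauchy in the measure $\mathbb P\otimes e^{-\rho t}dt$. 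Only then does it identify the limit $\hat X$, reconstruct $v$ from $\hat X^1$ via the explicit formula $v_t=\hat X^1_t-\bar x_1-\int_0^t \bar b^1(\hat X^1_s)\,ds-\sigma W^1_t$, and conclude $v=\bar v$ by Fatou and uniqueness. You instead compare $v^n$ directly against the already-known optimal $\bar v$ via $w^n=\tfrac12(v^n+\bar v)$; this is a legitimate shortcut since Remark~\ref{lemma appendix existence optimal controls} supplies $\bar v$ up front, and it spares you the separate identification step.

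Two small corrections to your outline. First, because $\bar b^1$ is affine and $\bar\sigma$ is constant, the first component is exactly linear in the control, so $X^{1,w^n}=\tfrac12 X^{1,n}+\tfrac12 X^{1,\bar x;\bar v}$ with no extra ``term linear in $v^n-\bar v$''; the defect in the convexity inequality therefore comes entirely from $h$ (and from $h_{x_2}\ge 0$ together with $X^{2,w^n}\le \tfrac12 X^{2,n}+\tfrac12 X^{2,\bar x;\bar v}$, which is the comparison step the paper also uses). Second, since $h_{x_1x_1}>0$ is only assumed pointwise, what falls out of the defect estimate is convergence \emph{in measure} of $X^{1,n}-X^{1,\bar x;\bar v}$ (after the localization to $\{|X^n|\le M,\,|X^{\bar x;\bar v}|\le M\}$ you correctly anticipate), not the global $L^2$ convergence you write; one then passes to a subsequence for $\mathbb P\otimes dt$-a.e.\ convergence, exactly as the paper does. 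With these adjustments your Steps~4--5 go through, and the total-variation subadditivity $|w^n|\le \tfrac12|v^n|+\tfrac12|\bar v|$ is all that is needed on the control-cost side (the paper uses this inequality without extracting any further ``deficit'' from it).
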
  
\begin{proof} The proof employs arguments as those in the proof of Theorem 8 in \cite{Menaldi&Taksar89}, that however need to be suitably adapted  in order to accommodate our more general convex setting. 

We organize the proof in two steps.
\smallbreak \noindent 
\emph{Step 1.}
Arguing by contradiction, in this step we prove that the sequence $X^n$ is Cauchy w.r.t.\ the convergence in the measure $\mathbb{P}\otimes e^{-\rho t}dt$; that is, for each $\delta >0$ we have
\begin{equation}
    \label{eq cauchy sequence X in measure}
    \mathbb{E}\bigg[ \int_0^\infty e^{-\rho t}\mathds{1}_{ \{|X_t^n -X_t^m|>\delta \} } dt \bigg] \to 0, \quad \text{as }n,m\to \infty. 
\end{equation} 
{Observe first that, if the sequence $X^{1,n}$ is Cauchy w.r.t.\ the convergence in the measure $\mathbb{P} \otimes e^{-\rho t}dt$, then it converges (in the measure $\mathbb{P}\otimes e^{-\rho t}dt$) to a process $X^1$.  
Then, we can employ the estimate in Lemma \ref{lemma estimate SDE} in order to deduce that the sequence
$X^{1,n}$ converges in $\mathbb L ^q (\Omega \times [0, \infty) ;\mathbb{P}\otimes e^{-\rho t}dt)$, for some $3/2<q<p$, to the same process $X^1$, so that it is Cauchy in $\mathbb L ^q (\Omega \times [0, \infty) ;\mathbb{P}\otimes e^{-\rho t}dt)$.
Moreover, similarly to \eqref{eq estimate for X2}, we can set $q':= (2q -1)/2$ and use Gr\"onwall's inequality to obtain
\begin{align*} 
   \int_0^\infty e^{-\rho t} \mathbb{E}[|X_t^{2,n} -X_t^{2,m}|^{q'}] dt 
   & \leq C \int_0^\infty e^{[q'\bar{L} - \rho (1-q'/q)] t} p_t t^{\frac{1}{q*}} \bigg( \int_0^\infty e^{-\rho  s} \mathbb{E}[|X_s^{1,n} - X_s^{1,m}|^{q}] ds \bigg)^{ \frac{q'}{q}} dt,  \notag 
\end{align*}  
where $p_t$ is a suitable (deterministic) polynomial in $t$, not depending on $n$, and where $q^*$ denotes the conjugate of $p/p'$. 
Notice that, for $q$ sufficiently close to $p$,  Condition \ref{ass sigma constant} in Assumption \ref{assumption} implies  that $q'\bar{L}-\rho (1-q'/q)<0$, so that the right-hand side of the latter display inequality converges to $0$ as $n,m \to \infty$.
Therefore, since $q'>1$, we obtain that the sequence $X^{2,n}$ is Cauchy in $\mathbb L ^1 (\Omega \times [0, \infty) ;\mathbb{P}\otimes e^{-\rho t}dt)$, which in turn implies that it is Cauchy w.r.t.\ the convergence in the measure $\mathbb{P}\otimes e^{-\rho t}dt$.    
In conclusion, if the sequence $X^{1,n}$ is Cauchy w.r.t.\ the convergence in the measure $\mathbb{P} \otimes e^{-\rho t}dt$, then the sequence $X^{n}$ is Cauchy w.r.t.\ the convergence in the measure $\mathbb{P} \otimes e^{-\rho t}dt$.
For this reason, in order to contradict \eqref{eq cauchy sequence X in measure}, we assume that, for some $\delta >0$ and for a subsequence (not relabelled), one has 
\begin{equation}\label{eq absurde delta xn xm}
    \mathbb{E}\bigg[ \int_0^\infty e^{-\rho t}\mathds{1}_{ \{|X_t^{1,n} -X_t^{1,m}|>\delta \} } dt \bigg] \geq \delta_0>0, \quad \text{for each }n,m\ \in \mathbb{N}, 
\end{equation}
for a certain constant $\delta_0 >0$.} 

Fix $\lambda \in (0,1)$. We begin by defining the processes
$$
Y^{n,m}:= X^{\bar{x};  \lambda v^n+(1-\lambda) v^m} \quad \text{and} \quad Z^{n,m} := \lambda X^n+(1-\lambda) X^m, \quad \text{for each }n,m\ \in \mathbb{N}. 
$$ 
We first need to show that 
\begin{equation}
    \label{eq X < Z}
    Y_t^{n,m} \leq  Z_t^{n,m}, \quad \text{for each } t\geq 0, \ \mathbb{P}\text{-a.s.}
\end{equation}
Since the drift $\bar{b}^1$ is affine, we have  $Y^{1;n,m} = Z^{1;n,m}$.
%%%%%%%%%%%%%%%%%%%%%%%%%%%%%%%%%%%%%%%%%%%% 
\begin{comment}
we find
\begin{align}\label{eq Z uno}
Z_t^{1;n,m} &= \bar{x}_1 + \int_0^t  (\lambda b^1(X_s^{1;n}) + (1-\lambda) b^1(X_s^{1;m}) ) ds + \sigma W_t^1  + \lambda v_t^n + (1-\lambda) v_t^m  \\ \notag
& \geq \bar{x}_1 +\int_0^t b^1(\lambda X_s^{1;n} + (1-\lambda) X_s^{1;m} ) dt + \sigma W_t^1  + \lambda v_t^n + (1-\lambda) v_t^m   \\ \notag 
& = \bar{x}_1 +\int_0^t b^1(Z_s^{1;n,m}) dt + \sigma W_t^1  + \lambda v_t^n + (1-\lambda) v_t^m ,
\end{align} 
while $Y_t^{1;n,m}= \bar{x}_1 + \int_0^t b^1(Y_s^{1;n,m}) ds + \sigma W_t^1  +   \lambda v_t^n + (1-\lambda) v_t^m$.
By the comparison principle for SDE (see \cite{ikeda1977}), we obtain
\begin{equation}
\label{eq Yuno <Z uno}
Y_t^{1;n,m} \leq Z_t^{1;n,m}, \ \text{for each } t\geq 0, \ \mathbb{P}\text{-a.s.}
\end{equation}
\end{comment}
%%%%%%%%%%%%%%%%%%%%%%%%%%%%%%%%%%%%%%%%%%%%
Moreover, since $b^{2}$ is convex,  we find 
\begin{align*}
Z_t^{2;n,m} & = \bar{x}_{2} +\int_0^t ( \lambda b^{2}( X_s^{n}) + (1-\lambda) b^{2}( X_s^{m} )) dt + \sigma W_t^{2} \\ \notag & \geq \bar{x}_{2} +\int_0^t b^{2}(Z_s^{1;n,m},Z_s^{2;n,m} ) dt + \sigma W_t^{2}   \\ \notag 
& = \bar{x}_{2} +\int_0^t b^{2}(Y_s^{1;n,m},Z_s^{2;n,m} ) dt + \sigma W_t^{2},
\end{align*}
while $Y_t^{2;n,m}= \bar{x}_{2} + \int_0^t b^{2}(Y_s^{1;n,m},Y_s^{{2};n,m}) ds + \sigma W_t^{2}$.  This, by the comparison principle for SDE (see \cite{ikeda1977}), implies that $ Y_t^{{2};n,m} \leq Z_t^{2;n,m}, \ \text{for each } t\geq 0, \ \mathbb{P}\text{-a.s.}$, and \eqref{eq X < Z} follows.

Next, in light of \eqref{eq X < Z}, by the monotonicity of $h$ in $x_2$ we find 
\begin{align} 
    \label{eq comb convex J first}
     \lambda J(\bar{x}; v^n) &+ (1-\lambda) J(\bar{x}; v^m) - J(\bar{x}; \lambda v^n+(1-\lambda) v^m)\\ \notag  
    &= \mathbb{E}\bigg[ \int_0^\infty e^{-\rho t} (\lambda h(X_t^n)+(1-\lambda) h(X_t^m) - h(Y_t^{n,m}) ) dt \\ \notag
    & \quad \quad \quad \quad + \int_{[0,\infty)} e^{-\rho t} (\lambda d|v^n|_t + (1-\lambda) d|v^m|_t -d |\lambda v^n+(1-\lambda) v^m|_t) \bigg] \\ \notag  
    & \geq \mathbb{E}\bigg[ \int_0^\infty e^{-\rho t} (\lambda h(X_t^n)+(1-\lambda) h(X_t^m) - h(Z_t^{n,m}) ) dt \bigg], 
\end{align}  
as we have that $|\lambda v^n+(1-\lambda) v^m|_t  \leq \lambda |v^n|_t + (1-\lambda) |v^m|_t$, and that $e^{-\rho t}$ is positive and decreasing.

Then, using \eqref{eq absurde delta xn xm}, for $M>0$ we observe that   
\begin{align*}
&\mathbb{E}\bigg[ \int_0^\infty e^{-\rho t}\mathds{1}_{ \{|X_t^{1,n} -X_t^{1,m}|>\delta \} }\mathds{1}_{ \{|X_t^n| \leq M, \, |X_t^m|\leq M \} } dt \bigg] \\
&\geq \delta_0- \mathbb{E}\bigg[\int_0^\infty e^{-\rho t}\mathds{1}_{ \{|X_t^n| > M \} }dt \bigg]- \mathbb{E}\bigg[\int_0^\infty e^{-\rho t}\mathds{1}_{ \{|X_t^m| > M \} }dt \bigg].
\end{align*}
Moreover, the estimate in Lemma \ref{lemma estimate SDE} and an application of Chebyshev's inequality  yield
 $$
 \mathbb{E} \bigg[  \int_0^\infty e^{-\rho t} \mathds{1}_{ \{ |X_t^n|>M \} } dt \bigg] \leq \frac{C (1+|\bar{x}|^p)}{M^{p'}},  \quad \text{for each }n\in \mathbb{N}, 
 $$
so that we can find $M$ big enough such that
$$
\mathbb{E}\bigg[ \int_0^\infty e^{-\rho t}\mathds{1}_{ \{|X_t^{1,n} -X_t^{1,m}|>\delta \} }\mathds{1}_{ \{|X_t^n| \leq M, \, |X_t^m|\leq M \} } dt \bigg] \geq \frac{\delta_0}{2}, \quad \text{for each }n,m\in \mathbb{N}.
$$ 
Combining the latter inequality with \eqref{eq comb convex J first}, we obtain
\begin{align}
\label{eq comb convex J 1}
   \lambda J(\bar{x}; v^n) &+ (1-\lambda) J(\bar{x}; v^m) - J(\bar{x}; \lambda v^n+(1-\lambda) v^m) \\ \notag
   & \geq \delta_M\mathbb{E}\bigg[ \int_0^\infty e^{-\rho  t}\mathds{1}_{ \{|X_t^{1,n} -X_t^{1,m}|>\delta \} }\mathds{1}_{ \{|X_t^n| \leq M, \, |X_t^m|\leq M \} } dt \bigg] \\ \notag
    & \geq \delta_M\frac{\delta_0}{2}, 
\end{align}
where, by strict convexity of $h$ in the variable $x_1$, we have
$$
\delta_M := \inf \big\{ \lambda h(x)+(1-\lambda) h(y) - h(\lambda x+(1-\lambda) y ) \big| \,  |x_1-y_1| > \delta, |x|,|y|\leq M \big\} >0. 
$$
On the other hand, by Lemma \ref{lemma epsilon optimality}, $J(\bar{x};v^n)$ converges to $V(\bar{x})$ as $n \to \infty$. Therefore, from \eqref{eq comb convex J 1}, we can find $\bar{n}\in\mathbb{N}$ such that
$$
V(\bar{x})\geq \delta_M \frac{\delta_0}{4} + J(\bar{x};\lambda v^n+(1-\lambda) v^m), \quad \text{for each }n,m\geq \bar{n}, 
$$
which contradicts the definition of $V$, completing the proof of \eqref{eq cauchy sequence X in measure}. 
\smallbreak \noindent 
\emph{Step 2.}
By the previous step, there exists a limit process $\hat{X}$ and, unless to consider a subsequence, we can assume that   
\begin{equation}
\label{eq pointwise convergence of X}
X_t^n \to \hat{X}_t\quad  \mathbb{P}\otimes dt\text{-a.e. in } \Omega \times [0,\infty), \quad \text{as } n\to \infty. 
\end{equation} 
Next, defining the process $v_t:=\hat{X}_t^1 -\bar{x}^1- \int_0^t \bar{b}^1(\hat{X}_s^1)ds - \sigma W_t^1$, using the estimate from Lemma \ref{lemma estimate SDE} and \eqref{eq pointwise convergence of X} we find  
\begin{equation*}
|v_t^n -v_t| \leq |X_t^{1,n} - \hat{X}_t^1| + \bar{L} \int_0^t|X_s^{1,n} - \hat{X}_s^1| ds \to 0 \quad  \mathbb{P}\otimes dt\text{-a.e. in } \Omega \times [0,\infty), 
\end{equation*}
which implies that 
\begin{equation}
\label{eq pointwise convergence of v} 
v_t^n \to v_t\quad  \mathbb{P}\otimes dt\text{-a.e. in } \Omega \times [0,\infty), \ \text{as } n\to \infty.
\end{equation}
We also observe that, by using Lemma 3.5 in \cite{K}, we can assume the processes $\hat{X}^1$ and $v$ to be right-continuous. 
Also, denoting with $\xi$ the total variation of $v$, from \eqref{eq pointwise convergence of v} we easily find
\begin{equation}
\label{eq pointwise convergence of total variation}
\xi_t \leq \liminf_n \xi_t^n \quad  \text{for each } t \geq 0.
\end{equation}  
Next, exploiting the limits in \eqref{eq pointwise convergence of X}, the Lipschitz continuity of $b^2$ and the estimate from Lemma \ref{lemma estimate SDE}, we can see that the process $\hat{X}^2$ is continuous and it solves the SDE 
$
d\hat{X}_t^{2}=b^2(\hat{X}_t^{1},\hat{X}_t^{2})dt + \sigma dW_{t}^2, \ t \geq 0,\ \hat{X}_{0-}^{2}=\bar{x}_2
$. This, together with the definition of $v$, implies that
\begin{equation}\label{eq hat X = X}
\hat{X}=X^{\bar{x};v}.    
\end{equation}

Finally, thanks to the limits in \eqref{eq pointwise convergence of X}, \eqref{eq pointwise convergence of v} and \eqref{eq pointwise convergence of total variation}, to the identity \eqref{eq hat X = X}, and to the continuity of $h$, we invoke Fatou's lemma and, with an integration by parts (see, e.g., Corollary 2 at p.\ 68 in \cite{Protter05}), we find  
\begin{align}\label{eq J < limi inf J^n}
J(\bar{x};v) &=   \mathbb{E} \bigg[ \int_0^\infty e^{-\rho t}  h(X_t^{\bar{x};v}) dt +\rho \int_0^\infty e^{-\rho t} \xi_t dt  \bigg] \\ \notag
& \leq \liminf_n \mathbb{E} \bigg[ \int_0^\infty e^{-\rho t} h(X_t^n) dt + \rho \int_0^\infty e^{-\rho t} \xi_t^n dt  \bigg] =\liminf_n J(\bar{x};v^n) =V(\bar{x}), 
\end{align}
where we have used that the sequence $(v^n)_{n\in \mathbb{N}}$ is minimizing for $\bar{x}$, according to Lemma \ref{lemma epsilon optimality}.
Thus, the process $v$ has locally bounded variation, and $v \in \mathcal{V}$. Also, from \eqref{eq J < limi inf J^n} we deduce that the control $v$ is optimal for $\bar{x}$, and, by uniqueness of optimal controls (see Remark \ref{lemma appendix existence optimal controls}), we conclude that $v=\bar{v}$ and $\hat{X}=X^{\bar{x};\bar{v}}$, completing the proof of the lemma. 
\end{proof}  

The proofs of the next two propositions follow by employing arguments similar to those employed in Sections 2.3 and 2.4 in \cite{kruk2000} (we provide details here in order to recall these arguments in the sequel).
\begin{proposition}
\label{prop X lives in W} We have that $\mathbb{P}[X_t^{\bar{x};\bar{v}}\in \overline{\mathcal{W}}, \, \forall t \geq 0 ]=1$.
\end{proposition}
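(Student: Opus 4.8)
The plan is to obtain the inclusion $X^{\bar{x};\bar{v}}_t\in\overline{\mathcal{W}}$ as a limit of the analogous inclusions satisfied by the approximating trajectories $X^n$, and then to promote the resulting ``for a.e.\ $t$'' statement to ``for every $t$'' by exploiting the right-continuity of $X^{\bar{x};\bar{v}}$.

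First I would record that, by property (1) in Definition \ref{def Skorokhod problem} applied to the control $v^{\varepsilon_n}$ of Lemma \ref{lemma epsilon Skorokhod problems}, for each $n$ one has $\mathbb{P}[X^n_t\in\overline{\mathcal{W}}_{\varepsilon_n}\ \text{for all }t\ge0]=1$. Since $V_{x_1}$ is continuous, $\overline{\mathcal{W}}_{\varepsilon_n}\subseteq\{x\in\mathbb{R}^2 : V_{x_1}^2(x)\le 1-\varepsilon_n\}\subseteq\mathcal{W}\subseteq\overline{\mathcal{W}}$, and $\overline{\mathcal{W}}$ is closed in $D=\mathbb{R}^2$. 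Hence there is a single $\mathbb{P}$-null set off which $X^n_t\in\overline{\mathcal{W}}$ for every $n$ and every $t\ge0$.

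Next, by Lemma \ref{lemma construction optimal policy} we may pass to a subsequence along which $X^n_t\to X^{\bar{x};\bar{v}}_t$ for $\mathbb{P}\otimes dt$-a.e.\ $(\omega,t)$. Combining this with the previous step and the closedness of $\overline{\mathcal{W}}$, we deduce the existence of a $\mathbb{P}$-null set $N$ such that, for each $\omega\notin N$, the set $T_\omega:=\{t\ge0 : X^{\bar{x};\bar{v}}_t(\omega)\notin\overline{\mathcal{W}}\}$ has zero Lebesgue measure.

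Finally I would invoke the right-continuity of $X^{\bar{x};\bar{v}}$ (which holds since it solves \eqref{SDE dynamics} driven by the right-continuous control $\bar{v}$, as established in Lemma \ref{lemma construction optimal policy}). Fix $\omega\notin N$: since $T_\omega$ is Lebesgue-null its complement is dense in $[0,\infty)$, so for any $t\ge0$ we may pick $t_k\downarrow t$ with $t_k\notin T_\omega$; then $X^{\bar{x};\bar{v}}_t(\omega)=\lim_k X^{\bar{x};\bar{v}}_{t_k}(\omega)$, and since each $X^{\bar{x};\bar{v}}_{t_k}(\omega)\in\overline{\mathcal{W}}$ and $\overline{\mathcal{W}}$ is closed, $X^{\bar{x};\bar{v}}_t(\omega)\in\overline{\mathcal{W}}$. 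This gives $\mathbb{P}[X^{\bar{x};\bar{v}}_t\in\overline{\mathcal{W}},\ \forall t\ge0]=1$. The only genuinely delicate point is this last upgrade from a.e.\ $t$ to every $t$; everything else is bookkeeping with null sets together with the elementary facts that $\overline{\mathcal{W}}$ is closed and the approximating domains $\overline{\mathcal{W}}_{\varepsilon_n}$ are contained in it.
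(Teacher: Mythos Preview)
Your proof is correct and follows essentially the same approach as the paper: use the inclusion $\overline{\mathcal{W}}_{\varepsilon_n}\subset\overline{\mathcal{W}}$ together with the $\mathbb{P}\otimes dt$-a.e.\ convergence from Lemma \ref{lemma construction optimal policy} to get $X^{\bar{x};\bar{v}}_t\in\overline{\mathcal{W}}$ for $\mathbb{P}\otimes dt$-a.e.\ $(\omega,t)$, and then upgrade via right-continuity. Your write-up is in fact slightly more detailed than the paper's, which merely invokes right-continuity without spelling out the density argument.
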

\begin{proof} By Lemma \ref{lemma construction optimal policy}, $X_t^n \to X_t^{\bar{x};\bar{v}}$,   $\mathbb{P}\otimes dt $-a.e.\ in $\Omega \times [0,\infty)$, and, by Lemma \ref{lemma epsilon Skorokhod problems}, $\mathbb{P}[X_t^{n}\in \overline{\mathcal{W}}, t\geq 0 ]=1$, as $\overline{\mathcal{W}}_{\varepsilon_n} \subset \overline{ \mathcal{W}}$ for each $n\in \mathbb{N}$. Therefore, it is clear that $X_t^{\bar{x};\bar{v}} \in \overline{\mathcal{W}}$, $\mathbb{P}\otimes dt $-a.e.\ in $\Omega \times [0,\infty)$, which, by right-continuity, implies that $\mathbb{P}[X_t^{\bar{x};\bar{v}}\in \overline{\mathcal{W}}, t\geq 0 ]=1$.
\end{proof}

\begin{proposition}\label{proposition the cont parts acts on the boundary}
We have $d\bar{v}=\bar{\gamma} d|\bar{v}|$ with  
$$
|\bar{v}|_t=\int_0^t \mathds{1}_{ \{X_{s-}^{\bar{x};\bar{v}} \in S, \,-V_{x_1}(X_{s-}^{\bar{x};\bar{v}}) = \bar{\gamma}_s \} } d |\bar{v}|_s, \quad \text{for each $t\geq 0, \ \mathbb{P}$-a.s.} $$
\end{proposition}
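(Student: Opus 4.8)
The plan is to pass to the limit in the analogous property satisfied by the approximating controls $v^n$, exploiting the convergences established in Lemma \ref{lemma construction optimal policy}. Recall that, by Lemma \ref{lemma epsilon Skorokhod problems}, each $v^n$ solves the classical Skorokhod problem in $\overline{\mathcal{W}}_{\varepsilon_n}$ with reflection direction $-V_{x_1}/|V_{x_1}|e_1$; in particular $d\xi^n = d|v^n|$ is supported on $\{X^n_{s-}\in S_{\varepsilon_n}\}$, where $|V_{x_1}(X^n_{s-})|^2 = 1-\varepsilon_n$, and there $\gamma^n_s = -V_{x_1}(X^n_{s-})/|V_{x_1}(X^n_{s-})| = -V_{x_1}(X^n_{s-})/\sqrt{1-\varepsilon_n}$. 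Thus $v^n_t = -\int_0^t V_{x_1}(X^n_{s-})\,d\xi^n_s/\sqrt{1-\varepsilon_n}$, and equivalently $\xi^n_t = -\sqrt{1-\varepsilon_n}\int_0^t \gamma^n_s\,d\xi^n_s$ with $|\gamma^n_s|=1$ on the support of $d\xi^n$. The key point is that the signed measures $dv^n$ converge, in a suitable sense, to $d\bar v$, and that the bound $\xi_t \leq \liminf_n \xi^n_t$ from \eqref{eq pointwise convergence of total variation}, combined with $J(\bar x;v^n)\to V(\bar x) = J(\bar x;\bar v)$, upgrades this to convergence of the total variations: since $J(\bar x;\cdot)$ controls $\mathbb{E}\int_0^\infty e^{-\rho t}d\xi_t$ and $h\geq \kappa_1|x_1|^p-\kappa_2$, Fatou applied in \eqref{eq J < limi inf J^n} forces $\mathbb{E}\int_0^\infty e^{-\rho t}\xi_t\,dt = \lim_n \mathbb{E}\int_0^\infty e^{-\rho t}\xi^n_t\,dt$, hence (with \eqref{eq pointwise convergence of total variation}) $\xi^n_t\to\xi_t$ in $\mathbb{P}\otimes e^{-\rho t}dt$-measure.

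Next I would identify the limiting Radon--Nikodym density. From $v^n_t = -\int_0^t V_{x_1}(X^n_{s-})\,d\xi^n_s/\sqrt{1-\varepsilon_n}$, the continuity of $V_{x_1}$ (indeed $V_{x_1}\in W^{2;\infty}_{loc}\subset C^1_{loc}$ by Theorem \ref{theorem connection Dynkin game PDE}), the convergence $X^n\to X^{\bar x;\bar v}$ of \eqref{eq pointwise convergence of X} and the convergence of the measures $d\xi^n\to d\xi$, a standard stability argument for Lebesgue--Stieltjes integrals (integrand converging uniformly on compacts, integrators converging together with their total variations) yields $\bar v_t = -\int_0^t V_{x_1}(X^{\bar x;\bar v}_{s-})\,d|\bar v|_s$. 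Reading this as a disintegration, $d\bar v = \bar\gamma\,d|\bar v|$ with $\bar\gamma_s = -V_{x_1}(X^{\bar x;\bar v}_{s-})$ on the support of $d|\bar v|$. Since $|\bar\gamma_s|=1$ $d|\bar v|$-a.s. (the density of a signed measure w.r.t. its total variation has modulus $1$), this already forces $|V_{x_1}(X^{\bar x;\bar v}_{s-})|=1$, i.e. $X^{\bar x;\bar v}_{s-}\in S$, at $d|\bar v|$-a.e. $s$; together with $\bar\gamma_s = -V_{x_1}(X^{\bar x;\bar v}_{s-})$ this is exactly the asserted identity $|\bar v|_t = \int_0^t \mathds{1}_{\{X^{\bar x;\bar v}_{s-}\in S,\ -V_{x_1}(X^{\bar x;\bar v}_{s-})=\bar\gamma_s\}}\,d|\bar v|_s$.

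To make the stability argument rigorous I would work $\omega$-by-$\omega$ along a subsequence on which \eqref{eq pointwise convergence of X}, \eqref{eq pointwise convergence of v}, and the measure-convergence of $\xi^n$ hold, use Helly-type/weak-$*$ compactness to extract convergence of $d\xi^n$ to a measure which by \eqref{eq pointwise convergence of total variation} and the equality of limits of total masses must coincide with $d\xi$ on $[0,t]$ for a.e. $t$, and then pass to the limit in $v^n_t = -\int_{[0,t]} V_{x_1}(X^n_{s-})\,d\xi^n_s/\sqrt{1-\varepsilon_n}$ using that $V_{x_1}(X^n_{\cdot-})\to V_{x_1}(X^{\bar x;\bar v}_{\cdot-})$ uniformly on $[0,t]$ outside the (countable) set of jump times, which are $d\xi$-null after continuity of $\bar v$ is invoked. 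The main obstacle is precisely this measure-theoretic passage to the limit: pointwise $\mathbb{P}\otimes dt$-convergence of $X^n$ and $v^n$ does \emph{not} automatically give convergence of the \emph{variation measures} $d\xi^n$ on the relevant sets, so one genuinely needs the tightness/total-mass argument sketched above (this is the analogue of the reasoning in Sections 2.3--2.4 of \cite{kruk2000}, which is why the authors reference it); once the variation measures are controlled, identifying $\bar\gamma = -V_{x_1}(X^{\bar x;\bar v}_{\cdot-})$ and deducing $X^{\bar x;\bar v}_{\cdot-}\in S$ on the support is immediate from $|\bar\gamma|=1$.
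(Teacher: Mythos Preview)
Your approach is different from the paper's and has a genuine gap. The paper does \emph{not} pass to the limit along the $v^n$ for this proposition. Instead it applies It\^o's formula directly to $e^{-\rho t}V^\varepsilon(X_t^{\bar x;\bar v})$, where $V^\varepsilon$ is the penalized value function from \eqref{eq appendix penalized HJB}: convexity of $V^\varepsilon$ kills the jump-correction term with the right sign, and $\rho V^\varepsilon-\mathcal{L}V^\varepsilon\le h$ gives, after letting $\varepsilon\to0$ and delocalizing,
\[
V(\bar x)\le\mathbb{E}\bigg[\int_0^\infty e^{-\rho t}h(X_t^{\bar x;\bar v})\,dt-\int_{[0,\infty)}e^{-\rho t}V_{x_1}(X_{t-}^{\bar x;\bar v})\bar\gamma_t\,d|\bar v|_t\bigg].
\]
Subtracting $V(\bar x)=J(\bar x;\bar v)$ yields $\mathbb{E}\int_{[0,\infty)} e^{-\rho t}\big(1+V_{x_1}(X_{t-}^{\bar x;\bar v})\bar\gamma_t\big)\,d|\bar v|_t\le 0$. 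Since $|V_{x_1}|\le 1$ and $|\bar\gamma|=1$, the integrand is nonnegative, hence it vanishes $d|\bar v|$-a.e., which is exactly the claim. The approximating controls $v^n$ are never used here; they were only needed earlier to produce $\bar v$ and to show $X^{\bar x;\bar v}\in\overline{\mathcal{W}}$.

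The gap in your argument is the passage $\int_0^t V_{x_1}(X^n_{s-})\,d\xi^n_s\to\int_0^t V_{x_1}(X^{\bar x;\bar v}_{s-})\,d|\bar v|_s$. The only convergence available for $X^n$ is $\mathbb{P}\otimes dt$-a.e.\ (Lemma \ref{lemma construction optimal policy}), but the measures $d\xi^n$ are supported on the sets $\{X^n\in S_{\varepsilon_n}\}$, which for a nondegenerate diffusion are $dt$-null; pointwise $dt$-a.e.\ convergence of the integrand therefore tells you nothing on the support of the integrator. The ``uniform convergence of $V_{x_1}(X^n_{\cdot-})$ on $[0,t]$'' you invoke is neither established in the paper nor to be expected from $\mathbb{P}\otimes dt$-convergence. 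Your Helly/total-mass sketch repairs the \emph{integrator} side (indeed $\xi^n\to|\bar v|$ in $L^1(\mathbb{P}\otimes e^{-\rho t}dt)$ follows from equality in the Fatou step), but not the \emph{integrand} side: even with $d\xi^n\rightharpoonup d|\bar v|$ and matching total masses, one cannot conclude $\int f_n\,d\xi^n\to\int f\,d|\bar v|$ when $f_n\to f$ only Lebesgue-a.e. The paper's It\^o/optimality comparison sidesteps this entirely.
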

\begin{proof}
Take $R>0$ such that $\bar{x} \in B_R$ and define $\tau_R:= \inf \{ t \in [0,\infty) | X_s^{\bar{x};\bar{v}} \notin B_R \}$. For each $\varepsilon >0$, let $V^\varepsilon$ be as in \eqref{eq control problem penalized}. As in Step 1 in the proof of Theorem \ref{theorem V sol HJB} in Appendix \ref{appendix auxiliary results}, $V^\varepsilon$ is a convex $C^2$-solution to \eqref{eq appendix penalized HJB}. By It\^o's formula for semimartingales (see, e.g., Theorem 33 at p.\ 81  in  \cite{Protter05}), applied on the process $(e^{-\rho t} V^\varepsilon(X_t^{\bar{x};\bar{v}}))_{t \geq 0}$ on the time interval $[0,\tau_R]$, we find 
\begin{align*}  
    \mathbb{E} [e^{-\rho \tau_R} V^\varepsilon(X_{\tau_R}^{\bar{x};\bar{v}}) ] = V^\varepsilon(\bar{x}) &+ \mathbb{E}\bigg[ \int_0^{\tau_R} e^{-\rho t}(\mathcal{L} V^\varepsilon  - \rho V^\varepsilon) (X_t^{\bar{x};\bar{v}}) dt +  \int_{[0,\tau_R)} e^{-\rho t} V_{x_1}^\varepsilon(X_{t-}^{\bar{x};\bar{v}}) \bar{\gamma}_t d|\bar{v}|_t \\
    & + \sum_{0 \leq t \leq \tau_R } e^{-\rho t}(V^\varepsilon(X_{t}^{\bar{x};\bar{v}})- V^\varepsilon(X_{t-}^{\bar{x};\bar{v}}) - V_{x_1}^\varepsilon(X_{t-}^{\bar{x};\bar{v}}) \bar{\gamma}_t (|\bar{v}|_t - |\bar{v}|_{t-})) \bigg].
\end{align*}  
By the convexity of $V^\varepsilon$, the last sum above is nonnegative.
Also, since the function $\beta$ in \eqref{eq appendix penalized HJB} in nonnegative, we have $\rho V^\varepsilon-\mathcal{L}V^\varepsilon \leq h$ a.e.\ in $\mathbb{R}^{2}$. 
{
Hence from the latter equality we have
\begin{equation*}
V^\varepsilon(\bar{x})  \leq \mathbb{E} \bigg[e^{-\rho \tau_R} V^\varepsilon(X_{\tau_R}^{\bar{x};\bar{v}}) + \int_0^{\tau_R} e^{-\rho t} h(X_{t}^{\bar{x};\bar{v}}) dt - \int_{[0,{\tau_R})} e^{-\rho t} V_{x_1}^\varepsilon(X_{t-}^{\bar{x};\bar{v}}) \bar{\gamma}_t d |\bar{v}|_t \bigg].
\end{equation*} 
Taking limits as $\varepsilon \to 0$ (using the monotonicity of $V^\varepsilon$ in $\varepsilon$  as in \eqref{eq appendix penalized HJB} and the monotone convergence theorem, together with \eqref{eq appendix Sobolev limits} and the dominated convergence theorem), we obtain
\begin{equation}
\label{eq suboptimality epsilon}
V(\bar{x})  \leq \mathbb{E} \bigg[e^{-\rho \tau_R} V(X_{\tau_R}^{\bar{x};\bar{v}}) + \int_0^{\tau_R} e^{-\rho t} h(X_{t}^{\bar{x};\bar{v}}) dt - \int_{[0,{\tau_R})} e^{-\rho t} V_{x_1}(X_{t-}^{\bar{x};\bar{v}}) \bar{\gamma}_t d |\bar{v}|_t \bigg]. 
\end{equation}
Moreover, by the tower rule and strong Markov property, we find
\begin{equation*} 
\lim_{R \to \infty} \mathbb [ e^{-\rho\tau_R} V( X_{\tau_R}^{\bar{x};\bar{v}})] \leq \lim_{R \to \infty} \bigg( V(\bar{x}) - \mathbb{E} \bigg[ \int_0^{\tau_R} e^{-\rho t} h( X_t^{\bar{x};\bar{v}} ) dt + \int_{[0,\tau_R]} e^{-\rho t} d|\bar{v}|_t  \bigg] \bigg) =0,  
\end{equation*}
so that, taking limits in \eqref{eq suboptimality epsilon} as $R \to \infty$ (using the monotone convergence theorem and the dominated convergence theorem), we obtain
\begin{equation} 
\label{eq suboptimality}
V(\bar{x})  \leq \mathbb{E} \bigg[ \int_0^{\infty} e^{-\rho t} h(X_{t}^{\bar{x};\bar{v}}) dt - \int_{[0,\infty)} e^{-\rho t} V_{x_1}(X_{t-}^{\bar{x};\bar{v}}) \bar{\gamma}_t d |\bar{v}|_t \bigg].
\end{equation}}
Next, by the optimality of $\bar{v}$, we have that $V(\bar{x}) = J(\bar{x};\bar{v})$, and, from \eqref{eq suboptimality}, it follows that
\begin{equation}\label{eq to compare support of v}
 \mathbb{E} \bigg[  \int_{[0,\infty)} e^{-\rho t} ( 1 + V_{x_1}(X_{t-}^{\bar{x};\bar{v}}) \bar{\gamma}_t ) d |\bar{v}|_t \bigg] \leq 0.
\end{equation}
This in turn implies, {using  $|\bar \gamma _t| = 1$ and $0\leq 1-|V_{x_1}| \leq 1+V_{x_1} \gamma$ for all $\gamma \in \mathbb{R}$ with $|\gamma|=1$}, that  
$$
0 \leq \mathbb{E} \bigg[  \int_{[0,\infty)} e^{-\rho t}(1 -|V_{x_1}(X_{t-}^{\bar{x};\bar{v}})| ) d|\bar{v}|_t \bigg] \leq \mathbb{E} \bigg[  \int_{[0,\infty)} e^{-\rho t} (1+V_{x_1}(X_{t-}^{\bar{x};\bar{v}}) \bar{\gamma}_t  ) d|\bar{v}|_t \bigg] \leq 0.
$$ 
From the latter chain of inequalities we deduce that the support of the random measure $d |\bar{v}|$ is $\mathbb{P}$-a.s.\ contained in the set $\{ (\omega,t) \in \Omega \times [0,\infty)  \, |\,  X_{t-}^{\bar{x};\bar{v}}(\omega) \in \partial \mathcal{W}, \bar{\gamma}_t(\omega) = -V_{x_1}(X_{t-}^{\bar{x};\bar{v}}(\omega)) \} $, which completes the proof of the proposition. 
\end{proof}

The proof of the next proposition also follows by employing the arguments in \cite{kruk2000}. Details are provided in Appendix \ref{appendix proof propositions kruk} for the sake of completeness. 
\begin{proposition}
\label{propo charact jumps} We have that,
$\mathbb{P}$-a.s., a possible jump of the process $X^{\bar{x};\bar{v}}$ at time $t \geq 0$ occurs on some interval $I\subset \partial \mathcal{W}$ parallel to the vector field $-V_{x_1} e_1$, i.e., such that $-V_{x_1}(x)e_1$ is parallel to $I$ for each $x\in I$. If $X^{\bar{x};\bar{v}}$ encounters such an interval $I$, it instantaneously jumps to its endpoint in the direction $-V_{x_1} e_1$ on $I$.  
\end{proposition}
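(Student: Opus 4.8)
The plan is to adapt the argument of \cite{kruk2000} (Section 2.5), using the structural facts already established. Fix a jump time $t>0$ of $\bar{v}$, on the $\mathbb{P}$-full event on which all previously proven pathwise statements hold (recall that here $\bar{x}\in\mathcal{W}$, so $\bar{v}$ has no jump at time $0$), and write $x^-:=X_{t-}^{\bar{x};\bar{v}}$, $x^+:=X_{t}^{\bar{x};\bar{v}}$, $\Delta v_t:=\bar{v}_t-\bar{v}_{t-}=\bar{\gamma}_t(|\bar{v}|_t-|\bar{v}|_{t-})$. Since the control acts only on the first coordinate, $x^+=x^-+\Delta v_t\,e_1$ and $x^+_i=x^-_i$ for $i\geq 2$; put $z:=x^-_2=x^+_2$. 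By Proposition \ref{proposition the cont parts acts on the boundary}, $x^-\in\partial\mathcal{W}$ and $\bar{\gamma}_t=-V_{x_1}(x^-)\in\{-1,+1\}$ with $\mathrm{sign}(\Delta v_t)=\bar{\gamma}_t$; without loss of generality assume $V_{x_1}(x^-)=-1$ (the case $V_{x_1}(x^-)=+1$ being symmetric), so that $\Delta v_t\geq 0$. By Lemma \ref{lemma waiting region nonempty}, $\mathcal{W}_1(z)=(\ell(z),r(z))$ for some $-\infty\leq\ell(z)<r(z)\leq+\infty$.

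\emph{Step 1 (the derivative is constant along the jump).} First I would revisit the It\^o expansion used in the proof of Proposition \ref{proposition the cont parts acts on the boundary}, applied to $(e^{-\rho s}V^\varepsilon(X_s^{\bar{x};\bar{v}}))_{s\leq\tau_R}$, but this time \emph{retaining} the jump-correction term
\[
\Sigma^{\varepsilon,R}:=\sum_{0\leq s\leq\tau_R}e^{-\rho s}\big(V^\varepsilon(X_s^{\bar{x};\bar{v}})-V^\varepsilon(X_{s-}^{\bar{x};\bar{v}})-V^\varepsilon_{x_1}(X_{s-}^{\bar{x};\bar{v}})\Delta v_s\big),
\]
which is nonnegative by the convexity of $V^\varepsilon$. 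Letting $\varepsilon\to 0$ and then $R\to\infty$ (using the same uniform limits and dominated/monotone convergence as in that proof, plus Fatou's lemma for the retained sum) and invoking the optimality $V(\bar{x})=J(\bar{x};\bar{v})$, the very chain of inequalities leading to \eqref{eq to compare support of v} now in addition forces $\mathbb{E}\big[\sum_{0\leq s<\infty}e^{-\rho s}(V(X_s^{\bar{x};\bar{v}})-V(X_{s-}^{\bar{x};\bar{v}})-V_{x_1}(X_{s-}^{\bar{x};\bar{v}})\Delta v_s)\big]=0$. Each summand is nonnegative (convexity of $V$), hence zero; in particular $V(x^+)-V(x^-)=V_{x_1}(x^-)\Delta v_t$. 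Since $V\in W_{loc}^{2;\infty}(D)\subset C^1(D)$, we also have $V(x^+)-V(x^-)=\Delta v_t\int_0^1 V_{x_1}(x^-+u\Delta v_t\,e_1)\,du$, with $u\mapsto V_{x_1}(x^-+u\Delta v_t\,e_1)$ continuous and nondecreasing (convexity of $V$ in $x_1$, $\Delta v_t\geq 0$); an average coinciding with the left endpoint forces constancy. Therefore $V_{x_1}\equiv V_{x_1}(x^-)=-1$ on the segment $I:=\{x^-+u\Delta v_t\,e_1:\ u\in[0,1]\}$, so $-V_{x_1}(y)\,e_1$ is parallel to $I$ for each $y\in I$.

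\emph{Step 2 ($I\subset\partial\mathcal{W}$ and maximality).} Because $|V_{x_1}|=1$ on $I$, we have $I\cap\mathcal{W}=\emptyset$. Since $V_{x_1}(x^-)=-1$, the monotonicity of $V_{x_1}$ in $x_1$ together with $|V_{x_1}|\leq 1$ gives $\ell(z)\in\mathbb{R}$ and $\{s:V_{x_1}(s,z)=-1\}=(-\infty,\ell(z)]$, so $x^-_1,x^+_1\leq\ell(z)$; moreover $x^-\in\partial\mathcal{W}$ (Proposition \ref{proposition the cont parts acts on the boundary}) and $x^+\in\overline{\mathcal{W}}$ (Proposition \ref{prop X lives in W}), hence also $x^+\in\partial\mathcal{W}$. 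To see that \emph{all} of $I$ lies in $\partial\mathcal{W}$, and that $x^+$ is precisely the endpoint $(\ell(z),z)$ of the maximal interval of $\partial\mathcal{W}$ through $x^-$ parallel to $e_1$ in the direction $+e_1$, I would use the approximating Skorokhod solutions: the jump of $\bar{v}$ at $t$ is ``witnessed'' by the continuous $\varepsilon_n$-reflected paths $X^n$ --- which remain in $\overline{\mathcal{W}}_{\varepsilon_n}\subset\overline{\mathcal{W}}$ and whose first component stays between the monotone slice endpoints $\ell_{\varepsilon_n}\downarrow\ell$ and $r_{\varepsilon_n}\uparrow r$ --- sweeping rapidly across a neighbourhood of $I$; passing to the limit along $X^n\to X^{\bar{x};\bar{v}}$ then gives $I\subset\overline{\mathcal{W}}$, hence $I\subset\partial\mathcal{W}$, and $X^1_t\geq\ell(z)$, which with $x^+_1\leq\ell(z)$ yields $x^+=(\ell(z),z)$. (For this last point there is also a self-contained alternative via the uniqueness of $\bar{v}$: if $x^+_1<\ell(z)$, then the control that jumps straight to $(\ell(z),z)$ at time $t$ and afterwards follows the optimal continuation of $\bar{v}$ has exactly the same cost, since $V_{x_1}\equiv-1$ on $[x^+,(\ell(z),z)]$ makes the extra jump cost offset the drop in continuation value, contradicting uniqueness.)

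\emph{Expected main obstacle.} The hard part is the limiting argument in Step 2: the convergence $X^n\to X^{\bar{x};\bar{v}}$ holds only $\mathbb{P}\otimes dt$-a.e.\ on $\Omega\times[0,\infty)$, so one cannot read off the limit at the jump time $t$ directly, and the slice endpoints $z\mapsto\ell(z),r(z)$ are not a priori continuous. Making precise the sense in which the $\varepsilon_n$-reflections witness the jump of $\bar{v}$ across the \emph{entire} interval $I$, and that $\bar{v}$ does not stop short of its endpoint, is the technical core of the proof; it is carried out --- with the modifications needed for our convex, interconnected setting --- exactly along the lines of \cite{kruk2000}, and the details are relegated to Appendix \ref{appendix proof propositions kruk}.
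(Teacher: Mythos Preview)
Your Step~1 is correct and is a genuinely different route from the paper's. By retaining the convex jump--correction term $\Sigma^{\varepsilon,R}$ in the It\^o expansion of Proposition~\ref{proposition the cont parts acts on the boundary} and passing to the limit, you force $\mathbb{E}[\Sigma]=0$, hence $V(x^+)-V(x^-)=V_{x_1}(x^-)\Delta v_t$ at every jump, and then the monotonicity/convexity argument pinning $V_{x_1}\equiv -1$ on $[x^-,x^+]$ is clean. The paper never carries out this direct computation; instead it argues by \emph{contradiction}: it defines the set $\mathcal{H}\subset\partial\mathcal{W}$ of points lying on a flat boundary segment parallel to $V_{x_1}e_1$, introduces the stopping time $\tau_\varepsilon=\inf\{t:\ X_{t-}\notin\mathcal{H},\ |\Delta X_t|\geq\varepsilon\}$, and shows via the dynamic programming principle that $V(X_{\tau_\varepsilon})-V(X_{\tau_\varepsilon-})>-|\Delta X_{\tau_\varepsilon}|$ would violate $V(\bar{x})=J(\bar{x};\bar{v})$. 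So your Step~1 and the paper's Step~2 are ``dual'' ways of exploiting the same inequality $V(x^+)-V(x^-)\geq -|\Delta v_t|$: you show equality always holds, the paper shows that strict inequality (which would occur if $x^-\notin\mathcal{H}$) is impossible.

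Where your proposal diverges more substantially is Step~2. The paper does \emph{not} use the approximating reflections $X^n$ at all in the proof of Proposition~\ref{propo charact jumps}; the entire argument is DPP $+$ uniqueness. Having established that $x^-\in\mathcal{H}$ (so by definition the maximal flat segment $I_{a,c}\subset\partial\mathcal{W}$ through $x^-$ exists), the paper observes that $V(a+r\eta)=V(a)+r$ on $I$, hence ``jump to $a$, then follow the optimal control for $a$'' realizes $V(x^-)$, and uniqueness forces $\bar{v}$ to do exactly this. The extension from $x^-=\bar{x}$ to arbitrary jump times is handled by the shifted control $\bar{v}^\tau_t:=\bar{v}_{\tau+t}-\bar{v}_{\tau-}$, which is optimal for the post--$\tau$ problem. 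This route completely sidesteps the obstacle you identify (only $\mathbb{P}\otimes dt$--a.e.\ convergence of $X^n$, possible discontinuity of $z\mapsto\ell(z)$), and in particular gives $I\subset\partial\mathcal{W}$ for free, since that inclusion is built into the definition of $\mathcal{H}$.

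Your parenthetical ``self-contained alternative via uniqueness'' is in fact much closer to what the paper actually does than your main $X^n$--based proposal. If you develop that alternative (replacing the informal ``follow the optimal continuation of $\bar{v}$'' by the precise shifted--control identity and the DPP), you recover the paper's argument; the $X^n$ route, by contrast, would require justifying limits of continuous paths at jump times of the limit, which is both harder and unnecessary here.
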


Combining then the Propositions \ref{prop X lives in W}, \ref{proposition the cont parts acts on the boundary} and \ref{propo charact jumps}, we see that, for $\bar{x} \in {\mathcal{W}}$, the optimal control $\bar{v} \in \mathcal{V}$ is a solution to the modified Skorokhod problem for the SDE \eqref{SDE dynamics} in $\overline{\mathcal{W}}$ starting at $\bar{x}$ with reflection direction $-V_{x_1} e_1$.

Take next $\bar{x} \in \overline{\mathcal{W}}$. By definition, there exists a sequence $(x^k)_{k \in \mathbb{N}} \subset \mathcal{W}$ such that $x^k\to \bar{x}$ as $k\to \infty$. For each $k$, let $w^k$ be the optimal control for $x^k$, and consider the controls 
$x^k -\bar{x} + w^k$, which consist in  following the policy $w^k$ after an initial jump from $\bar{x}$ to $x^k$.
Using the fact that $x^k \in \mathcal{W}$, from Proposition \ref{prop X lives in W} we have that $\mathbb{P}[X_t^{x^k;w^k}\in \overline{\mathcal{W}}, t\geq 0 ]=1$.
Observe, moreover, that $X^{x^k; w^k} = X^{\bar{x} ; x^k -\bar{x} + w^k}$, and that
$|J(\bar{x}; x^k -\bar{x} + w^k) - J(x^k;w^k)| = |\bar{x} -x^k|$.
By the continuity of $V$, we now see that
\begin{equation*}
    V(\bar{x}) = \lim_k V(x^k) = \lim_k J(x^k;w^k) = \lim_k J(\bar{x}; x^k -\bar{x} + w^k).  
\end{equation*}
Therefore, the sequence of controls $(x^k -\bar{x} + w^k)_{k\in \mathbb{N}}$ is a minimizing sequence for the initial condition $\bar{x}$. Repeating the proof of Lemma \ref{lemma construction optimal policy} with the sequence of controls $(x^k-\bar{x} + w^k)_{k\in \mathbb{N}}$, we see that
$ X_t^{x^k; w^k} \to X_t^{\bar{x}; \bar{v}} $, $\mathbb{P}\otimes dt $-a.e.\ in $\Omega \times [0,\infty)$. This allows to repeat the arguments in the proofs of Propositions \ref{prop X lives in W}, \ref{proposition the cont parts acts on the boundary} and \ref{propo charact jumps} in order to conclude that, also for $\bar{x} \in \overline{\mathcal{W}}$, the optimal control $\bar{v} \in \mathcal{V}$ is a solution to the modified Skorokhod problem for the SDE \eqref{SDE dynamics} in $\overline{\mathcal{W}}$ starting at $\bar{x}$ with reflection direction $-V_{x_1} e_1$.

Finally, through a verification theorem (which can be proved by using It\^{o}'s formula as in the proof of Proposition \ref{proposition the cont parts acts on the boundary}), it is easy to show that any solution to the modified Skorokhod problem for the SDE \eqref{SDE dynamics} in $\overline{\mathcal{W}}$ starting at $\bar{x}$ with reflection direction $-V_{x_1} e_1$ is an optimal control. This, by uniqueness of the optimal control (see Remark \ref{lemma appendix existence optimal controls}) implies that such a solution is unique,  completing the proof of Claim \ref{theorem x inside waiting} of Theorem \ref{theorem main characterization}.   
 
\subsubsection{Proof of Claim \ref{theorem x outside waiting}}\label{subsection proof x outside waiting} Fix  $\bar{x}=(\bar{x}_1, \bar{z}) \notin \overline{\mathcal{W}}$ and denote again by $\bar{v}$ the optimal control for $\bar{x}$. Let $\bar{y}_1 \in \mathbb{R}$ be the metric projection of $\bar{x}_1$ into the set $\overline{\mathcal{W}}_1(\bar{z})$. The set $\overline{\mathcal{W}}_1(\bar{z})$ is a closed  interval (cf.\ Lemma \ref{lemma waiting region nonempty}), hence the point $\bar{y}_1$ is uniquely determined. 
Set then $\bar{y}:= (\bar{y}_1,\bar{z})$ and observe that $\bar{y} \in \partial \mathcal{W}$. Let $\bar{w}$ be the optimal control for $\bar{y}$.
Notice that, since $V_{x_1}$ is pointing outside $\overline{\mathcal{W}}_1(\bar{z})$, we have $ V_{x_1}(\bar{y}) (\bar{x}_1 -\bar{y}_1 ) = |\bar{x}_1 -\bar{y}_1| $. 
Therefore, since $(\bar{y}_1 + \lambda (\bar{x}_1 - \bar{y}_1), \bar{z}) \notin \mathcal{W}$ for each $\lambda  \in (0,1)$, we get
\begin{align*}
V(\bar{x})= V(\bar{y}_1, \bar{z}) + \int_0^1 V_{x_1}(\bar{y}_1 + \lambda (\bar{x}_1 - \bar{y}_1), \bar{z}) (\bar{x}_1 -\bar{y}_1) d\lambda =  V(\bar{y}) +  |\bar{x}_1 -\bar{y}_1|. 
\end{align*}
This means that $V(\bar{x}) = J(\bar{y};\bar{w}) + |\bar{x}_1 -\bar{y}_1| = J(\bar{x}; \bar{x}_1 -\bar{y}_1 +\bar{w})$, which, by uniqueness of the optimal control, implies that $\bar{v}=\bar{x}_1 -\bar{y}_1 +\bar{w}$. Moreover, since $\bar{y} \in \overline{\mathcal{W}}$ and $\bar{w}$ is optimal for $\bar{y}$, by Claim \ref{theorem x inside waiting} we have that  $\bar{w}$ is the unique  solution to the modified Skorokhod problem for the SDE \eqref{SDE dynamics} in $\overline{\mathcal{W}}$ starting at $\bar{y}$ with reflection direction $-V_{x_1}e_1$.  This completes the proof of Claim \ref{theorem x outside waiting} and therefore also of Theorem \ref{theorem main characterization}. 

\section{On the proof of Theorem \ref{theorem main characterization} for linear volatility}\label{section proof main result geometric}
In this section we assume that Condition \ref{ass sigma geometric} in Assumption \ref{assumption} holds. To simplify the notation, also this proof is given for $d=2$, so that $D=\mathbb{R}_+^{2}=\{x \in \mathbb{R}^{2}\, | \, x_1, \, x_{2} >0 \}$. The generalization to the case $d>2$ is straightforward.
  
\subsection{A preliminary lemma}\label{subsection geom preliminary lemma}  Define the set      
$$\mathcal{V}_+^x:=\{v\in \mathcal{V}\, | \, X_t^{1,x;v}, \,X_t^{2,x;v}>0 \text{ for each }t\geq0, \  \mathbb{P}\text{-a.s.} \}. $$
Under Condition \ref{ass sigma geometric}, as mentioned in Remark \ref{remark role of assumption}, the natural domain for an optimally-controlled trajectory  is $\mathbb{R}_+^d$. The following lemma formalizes this statement.
\begin{lemma} 
\label{lemma geometric X>0}
We have
$
V(x)= \min_{v\in \mathcal{V}_+^x} J(x;v),  \text{ for each } x \in \mathbb{R}_+^{2}.
$
\end{lemma}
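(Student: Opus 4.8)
The plan is to prove $\inf_{v\in\mathcal V_+^x}J(x;v)\le V(x)$; since $\mathcal V_+^x\subset\mathcal V$ the reverse inequality is trivial, so this gives $V(x)=\inf_{v\in\mathcal V_+^x}J(x;v)$, and, the optimal control $\bar v$ being unique (Remark \ref{lemma appendix existence optimal controls}), the infimum is attained (at $\bar v$, which must then lie in $\mathcal V_+^x$). The idea is that near-optimal controls can be replaced, without increasing the cost, by controls keeping the state in $\mathbb R_+^2$. I would first observe that it suffices to keep the \emph{first} component nonnegative: if $v\in\mathcal V$ satisfies $X^{1,x;v}_t\ge0$ for all $t$ $\mathbb P$-a.s., then $X^{2,x;v}_t>0$ for all $t$ $\mathbb P$-a.s. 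Indeed, with $Y$ the geometric Brownian motion $dY_t=\sigma Y_t\,dW^2_t$, $Y_0=x_2>0$ (so $Y>0$), the difference $\phi:=X^{2,x;v}-Y$ solves $d\phi_t=b^2(X^{1,x;v}_t,X^{2,x;v}_t)\,dt+\sigma\phi_t\,dW^2_t$ with $\phi_0=0$, and since $b^2(x_1,x_2)\ge0$ for $x_1,x_2\ge0$ by Condition \ref{ass sigma geometric}, a stopping argument at $\tau_0:=\inf\{t:X^{2,x;v}_t\le 0\}$ gives $X^{2,x;v}\ge Y>0$.

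Given $v=\xi^+-\xi^-$ in minimal decomposition, I would set $\tilde v:=\xi^+-\tilde\xi^-$, where $(\tilde X^1,\tilde\xi^-)$ solves the equation obtained from the first line of \eqref{SDE dynamics} by suppressing the downward part of $v$ whenever the first component hits the boundary $0$: $\tilde X^1\ge0$, $0\le\tilde\xi^-\le\xi^-$ as measures, and the withheld mass $\xi^--\tilde\xi^-$ charges only $\{\tilde X^1=0\}$. This one-sided reflection is well posed precisely because at $0$ the coefficient $\sigma y$ vanishes and the drift reduces to $a_1\ge0$, so withholding downward pushes keeps $\tilde X^1\ge0$ (the solution can be built from piecewise-constant approximations of $\xi^-$). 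Then $|\tilde v|_t=\xi^+_t+\tilde\xi^-_t\le|v|_t$, so the singular cost does not increase, and $\tilde X^1=X^{1,x;\tilde v}\ge X^{1,x;v}$. For the integral cost, on $\{x_1<2x_1^*\}$ one has $h_{x_1}\le0$, so $h$ is nonincreasing there in $x_1$; combined with $b^2_{x_1}\le0$, $h_{x_2}\ge0$ (so enlarging $X^1$ shrinks $X^2$, lowering $h$), the convexity of $h$ and $b^2$, the bound $h_{x_1}\le-b_1^1$, and the moment bounds on near-optimal trajectories (from $h(x)\ge\kappa_1|x_1|^p-\kappa_2$ and the discount condition in Condition \ref{ass sigma geometric}), this gives $J(x;\tilde v)\le J(x;v)$ along a minimising sequence. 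Strict positivity of $\tilde X^1$ follows because at $\{\tilde X^1=0\}$ there is no downward push and the motion is $d\tilde X^1=a_1\,dt+d\xi^+_t\ge0$ with degenerate noise, so $0$ is an entrance boundary for $dY=(a_1+b_1^1Y)\,dt+\sigma Y\,dW^1$; by $h_{x_1}\le\min\{0,-b_1^1\}$ near $0$ it is never advantageous to keep $X^1$ pinned at $0$, so $\tilde v$ may, if necessary, be perturbed by an arbitrarily small upward push near $0$ without increasing the cost, whence $\tilde X^1_t>0$ for all $t$ and, by the first step, $\tilde v\in\mathcal V_+^x$.

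The step I expect to be the main obstacle is the integral-cost comparison. On $\{x_1<2x_1^*\}$ the cost indeed moves in the favourable direction, but $\tilde X^1$ can \emph{overshoot} this slab — when $X^{1,x;v}$ makes a low excursion and the retained upward part $\xi^+$ afterwards lifts $\tilde X^1$ above $2x_1^*$, where $h$ need not be monotone. Controlling this overshoot is precisely what the finer standing conditions are for: the margin between $x_1^*$ and $2x_1^*$, the inequality $h_{x_1}\le-b_1^1$ (compensating the extra linear drift $b_1^1\tilde X^1$ created by the lift), the convexity of $h$ and the $b^i$, and the moment estimates making such excursions negligible along a minimising sequence. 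A related subtlety is ruling out that $\tilde X^1$ stays pinned at $0$ (a priori possible when $a_1=0$), for which the same $h_{x_1}$-monotonicity near $0$ is used; the remaining items — well-posedness of the one-sided reflection, and the attainment of the infimum, equivalent by uniqueness to $\bar v\in\mathcal V_+^x$ — are routine once these are in place.
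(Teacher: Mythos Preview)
Your overall strategy—modify a near-optimal control to one keeping $X^1\ge0$ without increasing the cost—is natural, and your reduction ``$X^1\ge0\Rightarrow X^2>0$'' via comparison with a geometric Brownian motion is correct and useful. However, the specific modification you propose (a one-sided reflection of $X^1$ at $0$, withholding part of $\xi^-$) has the overshoot problem you yourself flag, and your proposed resolution does not close the gap. The difference $\tilde X^1-X^1$ evolves as a nonnegative geometric-type process driven by the withheld mass $d(\xi^--\tilde\xi^-)$; it does not decay and is not controlled by moment bounds on the \emph{original} trajectories. Once the upward part $\xi^+$ acts after an excursion, $\tilde X^1$ can be carried well above $2x_1^*$ for all later times, where neither $h_{x_1}\le0$ nor $h_{x_1}\le-b_1^1$ is available, so the inequality $J(x;\tilde v)\le J(x;v)$ simply need not hold. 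Working along a minimising sequence does not help: nothing prevents a near-optimal control from having exactly this structure.

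The paper avoids this by arguing directly on the (unique) optimal control $v$ and using a \emph{local} modification that rejoins $v$ after a finite random interval. One introduces the stopping time $\tau$ at which $\xi^-$ first acts while $X^1<x_1^*$; if $\mathbb P[\tau<\infty]=0$ one is done (this is your reduction). Otherwise, one freezes $\xi^-$ from $\tau$, lets the modified trajectory $\tilde X^1$ run until it first reaches $2x_1^*$ at time $\bar\tau$, and then jumps back to $X^1_{\bar\tau}$ and follows $v$ thereafter. On $[\tau,\bar\tau)$ one has $X^1\le\bar X^1<2x_1^*$, so any convex combination stays in the slab where $h_{x_1}\le\min\{0,-b_1^1\}$ applies; the contribution $-b_1^1$ is used precisely to absorb the extra variation cost incurred at the rejoining jump at $\bar\tau$ (via the identity $\bar X^1_{\bar\tau-}-X^1_{\bar\tau-}=\int_\tau^{\bar\tau}b_1^1(\bar X^1-X^1)\,dt+\text{(martingale)}-\int_{(\tau,\bar\tau)}d\xi^-$). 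This yields $J(x;v)\ge J(x;\bar v)$ with $\bar v\ne v$, contradicting uniqueness. The key structural idea you are missing is this rejoining device: it confines the comparison to the region where the standing assumptions bite, rather than trying to control a permanent discrepancy.
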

\begin{proof}
The idea of the proof is as follows. {We will show that, if the negative component of the optimal control never acts when the optimal trajectory lies in the region $(-\infty, x_1^*] \times \mathbb R$, then such a trajectory always remains in $\mathbb R _+^2$. On the other hand, arguing by contradiction, we show that, if the optimal control acts when the optimal trajectory lies in the region $(-\infty, x_1^*] \times \mathbb R$, then it is possible to construct an admissible control which performs better. This then contradicts the uniqueness of the optimal control.}

Let $v\in \mathcal{V}$ be an optimal control for $x\in \mathbb{R}_+^{2}$, and denote by $(\xi^+,\xi^-)$ its minimal decomposition. 
In order to simplify the notation, set $X:=X^{x;v}$. 
Assuming that $v_s=0$ for each $s<0$ and recalling $x_1^*$ from Condition \ref{ass sigma geometric} in Assumption \ref{assumption}, define the random variable
$$
\tau:= \inf \{t\geq 0 \, |\, (X_t^1, \xi_{t+1/k}^- - \xi_{t-}^-) \in (-\infty,x_1^*) \times (0,\infty), \ \text{for any}\ k \in \mathbb N \}. 
$$
It can be easily shown that $\tau$ is an $\mathbb{F}$-stopping time. 
Also, such a definition of $\tau$ is such that the negative part $\xi^-$ of $v$ acts at time $\tau$; that is, $\tau$ is in the support of the measure $\xi^-$.

If $\mathbb{P}[\tau < \infty]=0$, then the control $\xi^-$ never acts when the state process $X^1$ lies in the region $(-\infty,x_1^*)$. 
Since $a_1\geq 0$ and $b^2\geq 0$, this is enough to ensure that $X_t^{1,x;v}, \,X_t^{2,x;v}>0 \text{ for each }t\geq0, \  \mathbb{P}\text{-a.s.}$, which in turn implies that $v \in \mathcal{V}_+^x$.

Arguing by contradiction, suppose that $\mathbb{P}[\tau < \infty]>0$. 
Define the control $\tilde{v}_t:=\mathds{1}_{ \{t<\tau \} } v_t +\mathds{1}_{ \{ t \geq \tau \}} (\xi_t^+  - \xi_{\tau-}^- + \min\{ \frac{3}{2} x_1^* - X_{\tau -}^1, 0\} \mathds{1}_{\{ \Delta \xi_\tau^- >0 \} })$, and the process $\tilde{X}:=X^{x;\tilde{v}}$. Define next the stopping time $\bar{\tau}:= \inf \{t \geq \tau \, | \, \tilde{X}_t^1 \geq 2x_1^* \}$, the control 
$
\bar{v}_t:= \mathds{1}_{ \{t < \bar{\tau} \}} \tilde{v} + \mathds{1}_{ \{t \geq \bar{\tau} \} } (X_{\bar{\tau}}^1 - \tilde{X}_{\bar{\tau}-}^1 + v_t - v_{\bar{\tau}})
$
and the process  $\bar{X}:=X^{x;\bar{v}}$. 
% Since at time $\tau$ only the negative part $\xi^-$ of $v$ acts, 
{Notice that, on $\{ \tau < \infty \}$, we have $\tau < \bar{\tau}$.}
Also, by the definition of $\bar{v}$ and of $\tau$, for $k$ such that $\tau + 1/k < \bar{\tau}$, on $\{ \tau < \infty \}$ we have 
\begin{align*}
\bar{v}_{\tau + 1/k}-v_{\tau + 1/k} 
& = \xi^- _{\tau + 1/k}-\xi^- _{\tau -} + \min \{ \tfrac{3}{2} x_1^* - X_{\tau-}^1, 0 \}  \mathds{1}_{\{ \Delta \xi_\tau^- >0 \} } \\
& = \xi^- _{\tau + 1/k}-\xi^- _{\tau} + \mathds{1}_{\{ \Delta \xi_\tau^- >0\}} \Delta \xi_\tau^- +  ( \tfrac{3}{2} x_1^* - X_{\tau-}^1) \mathds{1}_{ \big\{ \Delta \xi_\tau^- >0, \, \tfrac{3}{2} x_1^* < X_{\tau-}^1 \big\} }\\
& \geq \xi^- _{\tau + 1/k}-\xi^- _{\tau} + \mathds{1}_{ \big\{ \Delta \xi_\tau^- >0, \, \tfrac{3}{2} x_1^* \geq X_{\tau-}^1 \big\} } \Delta \xi_\tau^- +  \tfrac{x_1^*}{2} \mathds{1}_{ \big\{ \Delta \xi_\tau^- >0, \,  \tfrac{3}{2} x_1^* < X_{\tau-}^1 \big\} }>0,
\end{align*}
so that the processes $v$ and $\bar{v}$ are not indistinguishable.
Moreover, $v$ and $\bar{v}$ are such that, on $\{ \tau < \infty \}$, we have
\begin{equation}
\label{eq relation X barX}
\begin{cases} 
   % v_t = \bar{v}_t \text{ for } t\in [0,\tau) \cup (\bar{\tau}, \infty), \\ 
    X_t^1=\bar{X}_t^1 \text{ for } t\in [0,\tau) \cup [\bar{\tau}, \infty), \\  X_t^1 \leq \bar{X}_t^1 \text{ for } t\in [\tau,\bar{\tau}). 
\end{cases}
\end{equation}
After some manipulations, from \eqref{eq relation X barX} we deduce that
\begin{align}\label{eq bar v better than v hold} 
J(x;v) - J(x;\bar{v})  %& = \mathbb{E}\bigg[ \int_\mathbb{T} e^{-\rho t} (h(X_t) dt + d|v|_t) + \int_\tau^{\bar{\tau}} e^{-\rho t} h(X_t) dt  + \int_{[\tau, \bar{\tau}]} e^{-\rho t} d|v|_t  \bigg] \\ \notag 
%& = \mathbb{E}\bigg[ \int_\mathbb{T} e^{-\rho t} (h(\bar{X}_t) dt + d|\bar{v}|_t) + \int_\tau^{\bar{\tau}} e^{-\rho t} h(X_t) dt  + \int_{[\tau, \bar{\tau}]} e^{-\rho t} d|\bar{v}|_t  \bigg] \\ \notag
%& \quad  + \mathbb{E}\bigg[\int_{(\tau, \bar{\tau})} e^{-\rho t} d \xi_t^- + e^{-\rho \tau} (\Delta |v|_{\tau}- \Delta |\bar{v}|_{\tau}) + e^{-\rho \bar{\tau}} ( \Delta |v|_{\bar{\tau}} - \Delta |\bar{v}|_{\bar{\tau}}) \bigg] \\ \notag  
& =   \mathbb{E}\bigg[ \mathds{1}_{\{\tau < \infty \}} \bigg( \int_{(\tau, \bar{\tau})} e^{-\rho t} d \xi_t^- + \int_\tau^{\bar{\tau}} e^{-\rho t} Dh(\hat{X}_t) (X_t - \bar{X}_t) dt \bigg) \bigg] \\ \notag 
& \quad + \mathbb{E}[ \mathds{1}_{\{\tau < \infty \}} e^{-\rho \tau} ( |X_{\tau}^1- X_{\tau-}^1| -| \bar{X}_{\tau}^1 -\bar{X}_{\tau -}^1| )] \\ \notag 
& \quad + \mathbb{E}[\mathds{1}_{\{\tau < \infty \}}  e^{-\rho \bar{\tau}} ( |X_{\bar{\tau}}^1- X_{\bar{\tau}-}^1| -| \bar{X}_{\bar{\tau}}^1 -\bar{X}_{\bar{\tau} -}^1| ) ], \notag 
\end{align}  
for $\hat{X}_t = \lambda_t \bar{X}_t  + (1-\lambda_t) {X}_t^{x;v} \in  (-\infty, 2 x_1^*) \times \R$, and suitable choice of $\lambda_t(\omega) \in [0,1]$. 
We point out that, the expectations in \eqref{eq bar v better than v hold} are well defined also for $\bar \tau = \infty$. Indeed, since $v$ is optimal, we have $\lim_{T\to \infty } \mathbb E [ e^{-\rho T} |v|_T ] =0 $, so that $ e^{-\rho \bar{\tau}} ( |X_{\bar{\tau}}^1- X_{\bar{\tau}-}^1| -| \bar{X}_{\bar{\tau}}^1 -\bar{X}_{\bar{\tau} -}^1| )=0$ $\mathbb P$-a.s.\ on $\{\bar \tau = \infty \}$.
% Since $\xi^- $ acts at time $\tau$, we have $X_{\tau}^1- X_{\tau-}^1 \leq 0$. Also, at time $\tau$ the control $\bar{v}$ can only jump to the left, giving $\bar{X}_{\tau}^1 -\bar{X}_{\tau -}^1 \leq 0$. Hence, using $\bar{X}_{\tau}^1- {X}_{\tau}^1 \geq 0$, we obtain

{
Noticing that
\begin{equation}
\label{eq jump 1}
 e^{-\rho \tau} ( |X_{\tau}^1- X_{\tau-}^1| -| \bar{X}_{\tau}^1 -\bar{X}_{\tau -}^1|)  = e^{-\rho {\tau}} ( \bar{X}_{\tau}^1- {X}_{\tau}^1) \geq 0, 
\end{equation}
from \eqref{eq bar v better than v hold}, we write
\begin{align}\label{eq il gran finale}
J(x;v) - J(x;\bar{v}) = \mathbb E[\mathds{1}_{\{\tau < \infty \}} \Psi  ],
\end{align}
with 
\begin{align}\label{eq bar v better than v}
\Psi :=  \int_{(\tau, \bar{\tau})} e^{-\rho t} d \xi_t^- & + \int_\tau^{\bar{\tau}} e^{-\rho t} Dh(\hat{X}_t) (X_t - \bar{X}_t) dt \\ \notag
& +e^{-\rho {\tau}} ( \bar{X}_{\tau}^1- {X}_{\tau}^1) +  e^{-\rho \bar{\tau}} ( |X_{\bar{\tau}}^1- X_{\bar{\tau}-}^1| -| \bar{X}_{\bar{\tau}}^1 -\bar{X}_{\bar{\tau} -}^1| ). \notag
\end{align}
}
Now, if $\bar{X}_{\bar{\tau}}^1  \geq \bar{X}_{\bar{\tau}-}^1$, then using \eqref{eq relation X barX} we find
\begin{align}\label{eq jump 2 easy case}
 |X_{\bar{\tau}}^1 & - X_{\bar{\tau}-}^1| -| \bar{X}_{\bar{\tau}}^1 -\bar{X}_{\bar{\tau} -}^1|  \geq    \bar{X}_{\bar{\tau} -}^1 -{X}_{\bar{\tau} -}^1 \geq 0.
\end{align} 
Therefore, plugging  \eqref{eq jump 2 easy case} into \eqref{eq bar v better than v} and taking the expectation, using \eqref{eq jump 1} we obtain the inequality
\begin{align}\label{eq contradiction J v 1}
 \mathbb E [ \mathds{1}_{\{\tau < \infty , \, \bar{X}_{\bar{\tau}}^1  \geq \bar{X}_{\bar{\tau}-}^1\} } \Psi ]
 &\geq  \mathbb E \bigg[ \mathds{1}_{\{\tau < \infty ,\, \bar{X}_{\bar{\tau}}^1  \geq \bar{X}_{\bar{\tau}-}^1\} } \int_\tau^{\bar{\tau}} e^{-\rho t}h_{x_1}(\hat{X}_t) ({X}_t^1 - \bar{X}_t^1 )dt \bigg] \\ \notag 
 & \quad + \mathbb E \bigg[ \mathds{1}_{\{\tau < \infty ,\, \bar{X}_{\bar{\tau}}^1  \geq \bar{X}_{\bar{\tau}-}^1\} } \int_\tau^{\bar{\tau}} e^{-\rho t}h_{x_2}(\hat{X}_t) ({X}_t^2 - \bar{X}_t^2 ) dt \bigg] \geq 0, 
\end{align} 
where we have also used \eqref{eq relation X barX}, Condition \ref{ass sigma geometric} in Assumption \ref{assumption}, and that, due to the monotonicity of $b^2$ in the variable $x_1$, via a comparison principle {we have ${X}_t^2 - \bar{X}_t^2 \geq 0 $ for $t \in (\tau, {\bar{\tau}})$.}
On the other hand, if $\bar{X}_{\bar{\tau}}^1  \leq \bar{X}_{\bar{\tau}-}^1 $, %and, using ${X}_{\bar{\tau} -}^1 - \bar{X}_{\bar{\tau} -}^1 \leq 0$, for $t \in (\tau, \bar{\tau})$
from \eqref{eq relation X barX} we obtain
\begin{align}\label{eq jump 2}
 |X_{\bar{\tau}}^1 &- X_{\bar{\tau} -}^1| -| \bar{X}_{\bar{\tau}}^1 -\bar{X}_{\bar{\tau}- }^1|  \geq  {X}_{\bar{\tau} -}^1 -  \bar{X}_{\bar{\tau} -}^1 \\ \notag
 & = {X}_{{\tau} }^1 - \bar{X}_{{\tau} }^1 + \int_{\tau}^{\bar{\tau} - } b_1^1 ({X}_t^1 - \bar{X}_t^1 ) dt + \int_{\tau}^{\bar{\tau} - }  \sigma ({X}_t^1 - \bar{X}_t^1 ) dW_t^1  - \int_{(\tau, \bar{\tau})} d\xi_t^-.  
\end{align}
{
By estimates as in Lemma \ref{lemma estimate SDE}, we have that
$\mathbb E [ \int_0^\infty e^{-\rho t} |X_t^1|^2 dt ] \leq C(1+|x|^2)$. 
Also, by definition of $\bar X$ and $\bar \tau$, using \eqref{eq relation X barX} we have that $ X^1_t \leq \bar X^1_t \leq |X_t^1|+ 2x_1^*$. 
Hence,  
\begin{align*}
   &\sup_{t\geq 0} \mathbb E \bigg[  \bigg| \int_0^t  e^{-\rho \bar \tau} \mathds{1}_{\{ b_1^1 \leq 0 , \, \tau < \infty, \, \bar{X}_{\bar{\tau}}^1  \leq \bar{X}_{\bar{\tau}-}^1\} }  \mathds{1} _{(\tau, \bar \tau)}(s) \sigma ({X}_s^1 - \bar{X}_s^1 ) dW_s^1 \bigg|^2 \bigg] \\
   & \quad  \leq \sup_{t\geq 0} \mathbb E \bigg[   \int_0^t e^{-\rho s} \sigma (|{X}_s^1|^2 + |\bar{X}_s^1|^2 )ds \bigg]   \leq C(1+|x|^2 + |x_1^*|^2) < \infty,
\end{align*}
which, by a version of the martingale convergence theorem (see, e.g., Problem 3.20 at p.\ 18 in \cite{karatzas&shreve1998}), yields
\begin{align*}
    \label{eq martingale conv theorem}
   &  \mathbb E \bigg[  \int_{\tau}^{\bar \tau} e^{-\rho \bar \tau} \mathds{1}_{\{ b_1^1 \leq 0 , \, \tau < \infty, \, \bar{X}_{\bar{\tau}}^1  \leq \bar{X}_{\bar{\tau}-}^1\} } \sigma ({X}_s^1 - \bar{X}_s^1 ) dW_s^1  \bigg] \\ \notag
   & \quad = \lim_{t\to\infty}  \mathbb E \bigg[  \int_0^t e^{-\rho \bar \tau} \mathds{1}_{\{ b_1^1 \leq 0 , \, \tau < \infty, \, \bar{X}_{\bar{\tau}}^1  \leq \bar{X}_{\bar{\tau}-}^1\} }  \mathds{1} _{(\tau, \bar \tau)}(s) \sigma ({X}_s^1 - \bar{X}_s^1 ) dW_s^1  \bigg] = 0. \notag
\end{align*}
By using the latter equality and by substituting \eqref{eq jump 2} into \eqref{eq bar v better than v}, as in \eqref{eq contradiction J v 1} we obtain
\begin{equation}\label{eq contradiction J v2}
  \mathbb E [ \mathds{1}_{\{ b_1^1 \leq 0 , \, \tau < \infty, \, \bar{X}_{\bar{\tau}}^1  \leq \bar{X}_{\bar{\tau}-}^1\} } \Psi] \geq 0 .
\end{equation}}
Similarly, for $b_1^1 \geq 0$ we find
\begin{align}\label{eq contradiction J v3}
\mathbb E [ & \mathds{1}_{\{ b_1^1 \geq 0, \, \tau < \infty, \,   \bar{X}_{\bar{\tau}}^1  \leq \bar{X}_{\bar{\tau}-}^1\} } \Psi ] \\ \notag
& \quad \quad  \geq 
\mathbb E \bigg[ \mathds{1}_{\{ b_1^1 \geq 0, \, \tau < \infty, \,  \bar{X}_{\bar{\tau}}^1  \leq \bar{X}_{\bar{\tau}-}^1\} }   \int_\tau^{\bar{\tau}} e^{-\rho t} (h_{x_1}(\hat{X}_t)+b_1^1) ({X}_t^1 - \bar{X}_t^1 ) dt\bigg] \\ \notag 
& \quad \quad \quad + 
\mathbb E \bigg[ \mathds{1}_{\{ b_1^1 \geq 0, \, \tau < \infty, \, \bar{X}_{\bar{\tau}}^1  \leq \bar{X}_{\bar{\tau}-}^1\} }  \int_\tau^{\bar{\tau}} e^{-\rho t} h_{x_2}(\hat{X}_t) ({X}_t^2 - \bar{X}_t^2 ) dt \bigg] \geq 0. 
\end{align}
Finally, adding the inequalities \eqref{eq contradiction J v 1}, \eqref{eq contradiction J v2} and \eqref{eq contradiction J v3}, and using \eqref{eq il gran finale} and \eqref{eq jump 1} we obtain
\begin{align*} 
J(x;v) - J(x;\bar{v})   \geq  0, 
\end{align*}
which contradicts the uniqueness of the optimal control $v$, completing the proof of the lemma.   
\end{proof}

\subsection{Sketch of the proof of Theorem \ref{theorem main characterization}}\label{subsection sketch of the proof geom} Since we are interested in characterizing the optimal control for any given $\bar{x} \in \R_+^2$, thanks to Lemma \ref{lemma geometric X>0} we can restrict the domain of the HJB equation to the set $\R_+^2$.
We observe that, upon exploiting the ellipticity of the operator $\mathcal{L}$ in the domain $\mathbb{R}_+^{2}$ (and, in particular, the uniform ellipticity of $\mathcal{L}$ on each ball $B\subset\mathbb{R}_+^{2}$), all the results from Sections \ref{section a connection with Dynkin games} and \ref{section epsilon optimal policies} can be recovered, with minimal adjustments of the arguments therein. 

For $\bar{x}\in {\mathcal{W}}$ we can consider the processes  
$
X^n:=X^{\bar{x};v^n}, \ v^n  \text{  for } n  \in \mathbb{N},
$
with $(v^n)_{n\in \mathbb{N}}$ minimizing sequence of solutions to the Skorokhod problems on domains $\overline{\mathcal{W}}_n$, according to Lemma \ref{lemma epsilon optimality} (here $\overline{\mathcal{W}}_n$ denotes the closure of $\mathcal{W}_n$ in $\R_+^2$). 

Estimates as those of Lemma \ref{lemma estimate SDE} can now be proved as follows. 
Denoting by $X^{\bar{x}}$ the solution to \eqref{SDE dynamics uncontrolled}, by standard results (see, e.g., Theorem 4.1 at p.\ 59 in \cite{mao2008}) we have
$\mathbb{E}[|X_t^{\bar{x}}|^p] \leq C e^{p\,(2 \bar{L} + \sigma^2(p-1)) t}(1+|\bar{x}|^p)$ for each $t\geq0$. 
Hence, arguing as in the proof of Lemma \ref{lemma estimate SDE} and using the requirement on $\rho$ from Condition \ref{ass sigma geometric} in Assumption \ref{assumption}, we find
\begin{equation}\label{eq  geom estimate xi n}
\sup_n \int_0^\infty e^{-\rho t} \mathbb{E} [ |X_t^{1,n}|^{p} ] dt   \leq C ( 1+|\bar{x}|^p ).
\end{equation}
Next, for $p':=(2 p-1)/2$, we use \eqref{eq  geom estimate xi n} to estimate $|X^{2,n}|^{p'}$. We underline that, since  $\overline{\mathcal{W}}_n \subset {\mathcal{W}}$, we have $X_t^n > 0 \  \mathbb{P}\otimes dt\text{-a.e.\ in } \Omega \times [0,\infty) $.
For each $n\in \mathbb{N}$, define the process $\Lambda^n$ as the solution to the SDE
$$
d\Lambda_t^n = \bar{L}(1+|X_t^{1,n}| + \Lambda_t^n)dt + \sigma \Lambda_t^n dW_t^2, \ t \geq0, \quad  \Lambda_0^n=\bar{x}_2. 
$$
Since $X_t^{2,n} \leq \bar{x}_2 + \int_0^t \bar{L}(1+|X_s^{1,n}| + |X_s^{2,n}|)ds + \sigma \int_0^t X_s^{2,n} dW_s^2,$ by a comparison principle we obtain $X^{2,n} \leq \Lambda^n$. 
{Therefore, using that $\Lambda_t^n= \hat{E}_t [\bar{x}_2 + \int_0^t \bar{L}(1+|X_s^{1,n}|)\hat{E}_s^{-1} ds ] $, with $\hat{E}_t:= \exp[(\bar{L}-\sigma^2/2)t + \sigma W_t^2]$, we find, for a suitable (deterministic) polynomial $p_t$,
\begin{align}\label{eq original}
\int_0^\infty e^{-\rho t} \mathbb{E}[ |X_t^{2,n}|^{p'}] dt  \leq \int_0^\infty e^{-\rho t} \mathbb{E}[ |\Lambda_t^{n}|^{p'}] dt & \leq C \int_0^\infty  e^{-\rho t} \mathbb{E}\bigg[ \hat{E}_t^{p'}\bar{x}_2^{p'} + p_t \int_0^t \hat{E}_t^{p'}\hat{E}_s^{-p'} ds\bigg] \\ \notag
& + C \int_0^\infty p_t e^{-\rho t}  \int_0^t \mathbb{E} \big[ |X_s^{1,n}|^{p'}({\hat{E}_t}/{\hat{E}_s})^{ p'} \big] ds \,  dt.
\end{align} 
By using H\"{o}lder's inequality with exponent $q=p/p'$ ($q^*$ denoting the conjugate of $q$), we estimate the integrand of the second time-integral in the right-hand side of \eqref{eq original} so to obtain  
\begin{align}\label{eq geom estimate X2}
e^{-\rho t} & \int_0^t \mathbb{E} \big[ |X_s^{1,n}|^{p'}({\hat{E}_t}/{\hat{E}_s})^{ p'}  \big] ds \\ \notag
& \leq e^{-\rho(1-\frac{1}{q})t}  \int_0^t \mathbb{E} \big[ e^{-\frac{\rho}{q}s} |X_s^{1,n}|^{p'}({\hat{E}_t}/{\hat{E}_s})^{ p'} \big] ds \\ \notag
& \leq  C e^{-\rho(1-\frac{1}{q})t} \bigg( \int_0^{t} e^{-\rho s}\mathbb{E}[ |X_s^{1,n}|^{p}] ds \bigg)^{\frac{1}{q}} \bigg( \int_0^t \mathbb{E}[ ({\hat{E}_t}/{\hat{E}_s})^{ p' q^*}] ds \bigg)^{\frac{1}{q^*}} \\ \notag
& \leq  C e^{-\rho(1-\frac{1}{q})t} \bigg( \int_0^{\infty} e^{-\rho s}\mathbb{E}[ |X_s^{1,n}|^{p}] ds \bigg)^{\frac{1}{q}} \bigg( \int_0^t \mathbb{E}[ ({\hat{E}_t}/{\hat{E}_s})^{ p' q^*}] ds \bigg)^{\frac{1}{q^*}}.
\end{align}
Hence,  substituting \eqref{eq  geom estimate xi n} into \eqref{eq geom estimate X2}, and then feeding the result back into \eqref{eq original}, we have
\begin{align}\label{eq geom estimate X2 second}
\int_0^\infty e^{-\rho t} \mathbb{E}[ |X_t^{2,n}|^{p'}] dt & \leq C \int_0^\infty  e^{-\rho t} \mathbb{E}\bigg[ \hat{E}_t^{p'}\bar{x}_2^{p'} + p_t \int_0^t \hat{E}_t^{p'}\hat{E}_s^{-p'} ds\bigg] \\ \notag
& + C ( 1+|\bar{x}|^p ) \int_0^\infty p_t e^{-\rho(1-\frac{1}{q})t}  \bigg( \int_0^t \mathbb{E}[ ({\hat{E}_t}/{\hat{E}_s})^{ p' q^*}] ds \bigg)^{\frac{1}{q^*}} dt.
\end{align} 
Furthermore, exploiting the requirement on $\rho$ made in Condition \ref{ass sigma geometric} in Assumption \ref{assumption}, after elementary computations one can see that
\begin{align}\label{eq estimate porca}
     & \int_0^\infty  e^{-\rho t} \mathbb{E}\bigg[ \hat{E}_t^{p'}\bar{x}_2^{p'} + p_t \int_0^t \hat{E}_t^{p'}\hat{E}_s^{-p'} ds\bigg]  \leq C (1 + |\bar{x}|^p), \\ \notag
     & \int_0^\infty  p_t e^{-\rho(1-\frac{1}{q})t}\bigg( \int_0^t \mathbb{E}[ ({\hat{E}_t}/{\hat{E}_s})^{ p' q^*}] ds \bigg)^{\frac{1}{q^*}} dt < \infty.
\end{align} 
Finally, substituting  \eqref{eq estimate porca} in \eqref{eq geom estimate X2 second}, we conclude that
$$
\sup_n \int_0^\infty e^{-\rho t} \mathbb{E}[ |X_t^{2,n}|^{p'}] dt \leq C (1 + |\bar{x}|^p), 
$$
which, combined with \eqref{eq  geom estimate xi n} {(and using that $p' < p$)}, gives
\begin{equation}\label{eq geom estimate SDE full}
\sup_n \int_0^\infty e^{-\rho t} (\mathbb{E} [ |X_t^{1,n}|^{p} ] +\mathbb{E} [ |X_t^{n}|^{{p'}} ] )dt   \leq C (1+|\bar{x}|^p).
\end{equation} 
}

Thanks to the estimate \eqref{eq geom estimate SDE full}, the arguments of Step 1 in the proof of Lemma \ref{lemma construction optimal policy} can be recovered, so that (up to a subsequence)
\begin{equation}\label{eq geom limits of X^n to hat X}
X_t^n \to \hat{X}_t \quad  \mathbb{P}\otimes dt\text{-a.e.\ in } \Omega \times [0,\infty), \quad \text{as } n\to \infty,
\end{equation}
for an adapted process $\hat{X}$. 
Using again \eqref{eq geom estimate SDE full} and the assumption $p \geq 2$, a standard use of Banach-Saks' theorem allows to find a subsequence of indexes $(n_j)_{j\in \mathbb{N} }$ such that the Cesàro means of $(X^{1,n_j})_{j\in \mathbb{N}}$ converge in $\mathbb{L}^2$ to the process $\hat{X}^1$; that is,  
\begin{equation}
\label{cesaro.convergence X}
\bar{X}^{1,m}:=\frac{1}{m}\sum_{j=1}^{m} X^{1,n_j} \rightarrow \hat{X}^1,\text{ as } m\to \infty, \quad \text{in } \mathbb{L}^2(\Omega\times [0,T]; \mathbb{P}\otimes dt ), \text{ for each $T>0$.}
\end{equation}
Next, defining the process $v_t:=\hat{X}_t^1 - \bar{x}_1 - \int_0^{t} (a_1 +b_1^1 \hat{X}_s^1) ds - \sigma \int_0^{t} \hat{X}_s^1 dW_s^1$, and exploiting the $\mathbb{L}^2$ convergence in \eqref{cesaro.convergence X} and the linearity of the dynamics for the first component, we deduce that
\begin{equation}
\label{cesaro.convergence v}
\bar{v}^{m}:=\frac{1}{m}\sum_{j=1}^m v^{n_j} \rightarrow v,\text{ as } m\to \infty, \quad \text{in } \mathbb{L}^2(\Omega\times [0,T]; \mathbb{P}\otimes dt ), \text{ for each $T>0$,}
\end{equation}
{where the processes $v^{n_j}$ were introduced at the beginning of this proof.}
Again, by using Lemma 3.5 in \cite{K}, we can assume the processes $\hat{X}^1$ and $v$ to be right-continuous.
Next, observe that the processes $X^{2,n}$ can be expressed as
$$
X_t^{2,n}= E_t \big[ \bar{x}_2 + \begin{matrix} \int_0^t b^2(X_s^n)/E_s ds  \big], \quad \text{with}\quad E_t:= \exp \big( \sigma W_t^2 - \frac{\sigma^2}{2} t \big),\end{matrix} \quad t \geq 0.
$$ 
Hence, taking limits as $n\to \infty$ in the latter equality (exploiting \eqref{eq geom limits of X^n to hat X} and  the uniform integrability deriving from \eqref{eq geom estimate SDE full}), we deduce that 
$$
\hat{X}_t^{2}= E_t \big[ \bar{x}_2 + \begin{matrix} \int_0^t b^2(\hat{X}_s)/E_s ds \end{matrix} \big], \quad  t \geq 0,
$$
so that, thanks also to the very definition of $v$, we have $\hat{X}=X^{\bar{x};v}$. 
Overall, from \eqref{eq geom limits of X^n to hat X}, \eqref{cesaro.convergence v} and the latter equality, we have 
\begin{equation}\label{eq geom final limits}
\bar{X}^{m}:=\frac{1}{m}\sum_{j=1}^{m} X^{n_j} \rightarrow X^{\bar{x};v}, \text{ and }\bar{v}^{m} \rightarrow v,  \quad  \mathbb{P}\otimes dt\text{-a.e.\ in } \Omega \times [0,\infty), \text{ as } m\to \infty.
\end{equation}
It is however worth noticing that $\bar{X}^m$ is not the solution of the SDE controlled by $\bar{v}^m$, unless $b^2$ is affine. 
Similarly to \eqref{eq pointwise convergence of total variation}, using the fact that the sequence of controls $v^n$ is minimizing, and exploiting the limits in \eqref{eq geom final limits} and the convexity of $h$, we find   
\begin{align*}
J(\bar{x};v) &=   \mathbb{E} \bigg[ \int_0^\infty e^{-\rho t}  h(X_t^{\bar{x};v}) dt +\rho \int_0^\infty e^{-\rho t} |v|_t dt  \bigg] \\ \notag
& \leq \liminf_m \mathbb{E} \bigg[ \int_0^\infty e^{-\rho t} h(\bar{X}_t^m) dt + \rho \int_0^\infty e^{-\rho t} |\bar{v}^m|_t dt  \bigg] \\
& \leq \liminf_m \frac{1}{m} \sum_{j=1}^m \mathbb{E} \bigg[ \int_0^\infty e^{-\rho t} h(X_t^{n_j}) dt + \rho \int_0^\infty e^{-\rho t} |{v}^{n_j}|_t dt  \bigg] =V(\bar{x}),
\end{align*}
so that the control $v$ has locally bounded variation and it is optimal. By uniqueness of the optimal control,
we deduce that $\bar{v}=v$ and $\hat{X}=X^{\bar{x}; \bar{v}}$.
  
Finally, thanks to the properties of $(X^n, v^n)$, by repeating the arguments leading to Propositions \ref{prop X lives in W}, \ref{proposition the cont parts acts on the boundary} and \ref{propo charact jumps} (see Appendix \ref{appendix proof propositions kruk}), the optimal control $\bar{v}$ for $\bar{x}\in {\mathcal{W}}$ can be characterized as the unique solution to the modified Skorokhod problem for the SDE \eqref{SDE dynamics} in $\overline{\mathcal{W}}$ starting at $\bar{x}$ with reflection direction $-V_{x_1} e_1$. On the other hand, for $\bar{x}\in \overline{\mathcal{W}}$, we can repeat the rationale at the end of Subsection \ref{section proof theorem x inside waiting}, which yields that the optimal control can be characterized also for $\bar{x}\in \overline{\mathcal{W}}$,  completing the proof of Claim \ref{theorem x inside waiting} of Theorem \ref{theorem main characterization}.  
  
When $\bar{x} \notin \overline{\mathcal{W}}$, following the arguments of Subsection \ref{subsection proof x outside waiting}, one can characterize the initial jump of $\bar{v}$. This completes the proof of Theorem \ref{theorem main characterization} under Condition \ref{ass sigma geometric} in Assumption \ref{assumption}. 
 
\section{Comments, extensions and examples}\label{section examples}
 
\subsection{Refinements of Assumption  \ref{assumption}}\label{section refinements}  Assumption  \ref{assumption} can be improved as follows. 
\subsubsection{Affine drift}\label{subsection Affine drift}
{
When the drift of the dynamics is affine, some of the monotonicity conditions in Assumption \ref{assumption} can be relaxed.
Indeed, Theorem \ref{theorem main characterization} holds if Assumption \ref{assumption} is replaced with the following conditions: 
\begin{assumption} For $p=2$, assume that: 
    \label{assumption affine drift}
    \begin{enumerate}
        \item The running cost $h$ satisfies Condition \ref{assumption on h} in Assumption \ref{assumption};
        \item $\bar{b}(x):=a + bx$, for a vector $a \in \mathbb{R}^{d}$ and a matrix $b\in\mathbb{R}^{d\times d}$ such that the vector
        $
        \beta := ( 0,b_1^2,...,b_1^d)^{\text{\tiny {$\top$}}} \in \mathbb{R}^{d}$ is an eigenvector of $b$ and  $h_{x_1 \beta} \geq 0$ (here the vector $( 0,b_1^2,...,b_1^d)^{\text{\tiny {$\top$}}}$ is the first column of $b$, with $b_1^1$ replaced by $0$, while $h_{x_1 \beta}$ denotes the $\beta$-directional derivative of $h_{x_1}$);
        \item \label{assumption affine drift.hro} $\bar{\sigma}=\sigma$ for a constant $\sigma > 0$ and  $
        \rho > 2 \Lambda(b)$, with $ \Lambda(b):= \max \{ \text{Re}(\lambda) \, | \, \lambda \text{ eigenvalue of $b$} \}.  
        $
    \end{enumerate}
\end{assumption}
In this case, for $x \in \mathbb{R}^{d}$, $r > 0$ and $x^r:= x + r  \beta $,  the solution $X^{x^r}$ of \eqref{SDE dynamics uncontrolled} writes (see, e.g., p.\ 99 in \cite{mao2008})  as  $ X_t^{x^r}=e^{bt} x^r + P_t $, where $P_t$ does not depend on $x^r$. 
Hence, since the vector $\beta$ is by assumption an eigenvector of the matrix $b$ with eigenvalue $\lambda$, we find
$ X_t^{x^r}-X_t^x =r \,  e^{tb}\beta = r e^{t \lambda} \beta,  \text{ for each } t\geq 0, \ \mathbb{P}$-a.s. This easily allows to repeat the arguments in the proof of Proposition  \ref{proposition V_x1 nondecreasing in x2}, so that $V_{x_1 \beta} \geq 0$, while all of the other results  in this paper still hold (often with less technical proofs).   
We refer to Lemma 2.2 and Theorem 2.3 in \cite{chiarolla&haussmann2000} for more details on the sufficiency of the requirement on $\rho$ in Condition \ref{assumption affine drift.hro} in Assumption \ref{assumption affine drift} (the case $p>2$ can be treated as well, for  $\rho$ large enough).
We also underline that all the results in this paper apply for a constant volatility matrix $\bar{\sigma}$ such that $\bar{\sigma} \bar{\sigma}^{\text{\tiny {$\top$}}}$ is positive definite, $\bar{\sigma}^{\text{\tiny {$\top$}}}$ denoting the transpose of $\bar{\sigma}$.
}
\subsubsection{On Condition \ref{assumption on b}}\label{subsection On Condition 2} A careful look into the proofs of Proposition \ref{proposition V_x1 nondecreasing in x2} and of Lemma \ref{lemma construction optimal policy}  reveals that the results in this paper remain valid if the drift coefficients $b^i$ in Condition  \ref{assumption on b} in Assumption \ref{assumption} satisfy one of the following more general requirements. 
        \begin{enumerate}
            \item Under Condition \ref{ass sigma constant}, for $i=2,...,d$, either of the following is satisfied:
        \begin{enumerate} 
            \item $b^i$ is convex, $h_{x_i} \geq0$, and either $ {b}_{x_1 }^{i},\, {b}_{x_1 x_{i}}^{i},\, h_{x_1 x_{i}} \leq 0 $ or $ {b}_{x_1 }^{i},\, {b}_{x_1 x_{i}}^{i},\, h_{x_1 x_{i}} \geq 0 $; 
            \item $b^i$ is concave, $h_{x_i} \leq0$, and either $ {b}_{x_1 }^{i},\, - {b}_{x_1 x_{i}}^{i},\,  h_{x_1 x_{i}} \leq 0 $ or $ {b}_{x_1 }^{i},\, - {b}_{x_1 x_{i}}^{i},\,  h_{x_1 x_{i}} \geq 0 $.
       \end{enumerate}  
        \item Under Condition \ref{ass sigma geometric}, for  $i=2,...,d$, either of the following is satisfied:
        \begin{enumerate}
            \item $b^i$ is convex, $h_{x_i} \geq0$, and  $ {b}_{x_1 }^{i},\, {b}_{x_1 x_{i}}^{i},\, h_{x_1 x_{i}} \leq 0 $; 
        \item $b^i$ is concave, $h_{x_i} \leq0$, and $ {b}_{x_1 }^{i},\, - {b}_{x_1 x_{i}}^{i},\,  h_{x_1 x_{i}} \leq 0 $.
        \end{enumerate}
\end{enumerate}
We point out that the conditions to deal with a linear volatility need to be compatible with the arguments in the proof of Lemma \ref{lemma geometric X>0} and are, for this reason, more restrictive.     
\subsubsection{On the lower-growth of $h$} We underline that the lower-growth requirement on $h$ in Condition \ref{assumption on h} in Assumption \ref{assumption} can be improved in some particular settings:
%for $p \geq2$, it is sufficient to require that  $h(x)\geq \kappa_1 |x_1|^{\bar{p}}-\kappa_2$, with $p-1<\bar{p}\leq p$ and $\rho^*$ in Condition \ref{ass discount factor} replaced by  $\rho^*:=\frac{\bar{p}(\bar{p} + p -1 )}{\bar{p} - p +1}$. 
If the drift is affine and the volatility is constant, for $p\leq2$ it is sufficient to assume $h\geq - \kappa_2$. Indeed, in this case, the proof of the estimate \eqref{eq semiconcavity} in Step 2 in the proof of Theorem \ref{theorem V sol HJB} in Appendix   \ref{appendix auxiliary results} simplifies {(in particular, in \eqref{estimate A + B}, we would have $M_2=0$ since the processes $Z$ and $ X^{x^\lambda; \alpha}$ coincide, by linearity of the dynamics)} and it can be provided without relying on Lemma \ref{lemma estimate SDE}. Also, for any $x\in \mathbb{R}^d$ and any sequence of minimizing controls $(v^n)_{n \in\mathbb{N}}$, we have the estimate
$$
\sup_n \begin{matrix} \mathbb{E}\big[ \int_{[0,\infty)} e^{-\rho t} d |v^n|_t \big] \end{matrix} \leq C(1+|x|^p), 
$$
which, combined with $ \mathbb{E}[ |X_t^{x;v^n}| ] \leq C(1+|x|^p + \mathbb{E}[ |v^n|_t  ])e^{\bar{L} t} $, gives 
$$
\sup_n \begin{matrix} \mathbb{E}\big[ \int_{[0,\infty)} e^{-(\rho + \bar{L}) t}  |X_t^{x;v^n}|  dt \big] \end{matrix} \leq \sup_n  C \Big(1+|x|^p + \begin{matrix} \mathbb{E}\big[ \int_{0}^\infty e^{-\rho t}  |v^n|_t  dt \big] \end{matrix}\Big) \leq C(1+|x|^p). 
$$
Therefore, a limit process $\hat{X}$ such that $X_t^{x;v^n} \to \hat{X}_t$ $\mathbb{P}\otimes dt$-a.e.\ as $n\to \infty$ can be found, by adapting  the reasoning in Step 1 in the proof of Lemma  \ref{lemma construction optimal policy}. Also, using Lemma 3.5 in \cite{K}, in the spirit of what has been done in Subsection \ref{subsection sketch of the proof geom}, we can exploit the convexity of $h$ and the fact that $\bar{b}$ is affine in order to prove that $\hat{X}=X^{x;v}$,  with $v$ optimal control for the given $x$. This allows to recover Lemma \ref{lemma construction optimal policy} and to characterize the optimal control $v$.  
\subsection{Some remarks}\label{subsection some remark} We provide here some extensions of the results contained in this paper.
\begin{remark}[Asymmetric costs of action] Unless to slightly modify some of the arguments in this paper, Theorem \ref{theorem main characterization} extends to the case in which increasing the first component of the state process has a different cost than decreasing it; that is, to the cost functional
$$ 
J_{\kappa_1, \kappa_2}(x;v):=\mathbb{E}\bigg[ \int_0^\infty e^{-\rho t} h(X_t^{x;v}) dt+ \kappa_1\int_{[0,\infty)}e^{-\rho t} d\xi_t^{+} + \kappa_2\int_{[0,\infty)}e^{-\rho t} d\xi_t^{-} \bigg], \quad \kappa_1,\,\kappa_2 >0.
$$
In this case, the value function $V$ solves the HJB equation 
\begin{equation*} 
\max \{ \rho V-\mathcal{L} V -h , - V_{x_1} - \kappa_1,  V_{x_1} - \kappa_2 \} = 0,  \quad \text{a.e.\ in } \mathbb{R}^{2}.
\end{equation*}
This can be shown by employing arguments similar to those in the proof of Theorem \ref{theorem V sol HJB} in Appendix \ref{appendix auxiliary results}, by replacing the penalizing term in \eqref{eq appendix penalized HJB} with an ``asymmetric'' penalization
$[ \beta(-V_{x_1} - \kappa_1) + \beta(V_{x_1} - \kappa_2) ]/{\varepsilon}$. Most of the arguments in this paper remains essentially unchanged, and the optimal control can be characterized as the solution to a Skorokhod problem on the domain $\mathcal{W}_{\kappa_1, \kappa_2}:=\{ y \in \mathbb{R}^{d}\, | \, - \kappa_1 < V_{x_1}(y) < \kappa_2 \}$. 
\end{remark} 

\begin{remark}[Monotone controls]\label{remark monotone controls}
The approach in this paper allows also to characterize optimal controls for stochastic singular control problems where the minimization problem is formulated over the set of monotone controls; that is, when  $$V(x):= \inf_{\xi \in \mathcal{V}_{\uparrow} } J(x;\xi) \quad \text{with}\quad \mathcal{V}_{\uparrow}:=\{ \xi \in \mathcal{V}, \, \xi \text{ nondecreasing}\}.$$
In this case, $V$ solves the HJB equation $\max \{ \rho V-\mathcal{L} V -h , - V_{x_1} - 1 \} = 0,$ a.e.\ in $D$, and its derivative $V_{x_1}$ is the value function of an optimal stopping problem (rather than a Dynkin game).  
The arguments in this paper can be easily adapted, and the optimal control can be characterized as the solution to a Skorokhod problem on the domain $\mathcal{W}_+:=\{ y \in \mathbb{R}^{d}\, | \,  - 1 < V_{x_1}(y) \}$.
We stress that, in this case, the additional requirements on $h$ and $\bar{b}$ in Condition \ref{ass sigma geometric} in Assumption \ref{assumption} are not anymore needed (see Remark \ref{remark role of assumption}). 
\end{remark}
\begin{remark}[Finite time horizon] 
A characterization result analogous to Theorem \ref{theorem main characterization} could also be investigated for an optimal control problem over a finite time-horizon. For example, when $d=2$ and $b$ is affine, a connection with Dynkin games is known from \cite{chiarolla&haussmann1998}. Therefore, it seems possible to use this connection in order to investigate the monotonicity of the value of the game (as in Proposition \ref{proposition V_x1 nondecreasing in x2}), and to use this monotonicity in order to construct $\varepsilon$-optimal controls $v^\varepsilon$.  
In this case, building on the results in \cite{boryc&kruk2016}, one can try to study the limit as $\varepsilon \to 0$ of $(v^\varepsilon)_{\varepsilon>0}$, in order to provide a characterization of the optimal control.
\end{remark}

\subsection{Examples}\label{subsection examples}     

For the sake of illustration, we begin with the following:

\begin{example}\label{example key} 
{Let $d=2$, $\rho$ large enough, $\phi$ be a convex nonincreasing function  and $f$ be a convex nondecreasing function. In light of the discussion in Section \ref{section refinements}, the optimal control can be then characterized in the following settings: 
\begin{enumerate}
    \item $\bar{\sigma}$ as in Condition \ref{ass sigma constant} and
\begin{enumerate}
    \item\label{example linear quadratic} $b^2(x) = a^2 + b_1^2 x_1 + b_2^2 x_2$, $h(x)= Q_1 x_1^2 + Q_{12} x_1 x_2 + Q_2 x_2^2$ with $Q_1, Q_2 \geq 0$ and $Q_{12} b_1^2 \geq 0$ (in this case, Assumption \ref{assumption affine drift} is satisfied);
    \item $b^2(x) =  \phi(x_1) + b_2^2 x_2$, $h(x)=|x_1|^2 + f(x_2)$ (in this case, the conditions discussed in Subsection \ref{subsection On Condition 2} are met); 
\end{enumerate}
\item $\bar{\sigma}$ and $a_1$ as in Condition \ref{ass sigma geometric}, $\phi$ is nonnegative, $b_2^2 \geq 0$, $b_1^1 \leq 0$,  $x_1^*>0$ and
$b^2(x) =  \phi(x_1) + b_2^2 x_2$, $h(x)=|x_1 - x_1^*|^2 + f(x_2)$ (in this case, the conditions discussed in Subsection \ref{subsection On Condition 2} are met).
\end{enumerate}
In particular, the setting of Point (\ref{example linear quadratic}) encompasses a relevant class of \emph{linear-quadratic} singular stochastic  control problems, and it can be thought of as the leading example of this work.}    
\end{example}

\begin{example}
Here we discuss a model of pollution control. In the sequel, $x\in \R_+^2$ is the given and fixed initial condition of the state variable. Consider a company that can increase via an irreversible investment plan $\xi \in \mathcal{V}_\uparrow$ (cf.\ Remark \ref{remark monotone controls}) its production capacity $X^{1,x;\xi}$. The latter depreciates at constant rate $\delta>0$ and is randomly fluctuating, e.g.\ because of technological uncertainty. Production leads to emissions of pollutants and thus impacts the level of a state process $X^{2,x;\xi}$ which summarizes one or more stocks of environmental pollutants (such as the average concentration of CO2 in the atmosphere). We assume that such an externality of production on the stock of pollutants is measured by a positive, convex, increasing, Lipschitz continuous function $\phi$ that has bounded second order derivative. Overall, the dynamics of $X^{x;\xi}$ is given by \begin{equation*}
 \begin{cases}
     dX_t^{1,x;\xi}= -\delta X_t^{1,x;\xi} dt +\sigma_1 X_t^{1,x;\xi} d W_t^1 + d\xi_t, \\
     dX_t^{2,x;\xi}= ( \phi(X_t^{1,x;\xi})- X_2^{2,x;\xi}) dt +\sigma_2 X_t^{2,x;\xi} d W_t^2. 
 \end{cases}  
\end{equation*}

The company aims at choosing a production plan that minimizes the sum of different costs: the cost of not meeting a given production level $\theta$; the penalty of leading to a level of pollution that exceeds some environmental target $\vartheta$; the proportional costs of investment. That is,
\begin{equation*}
    V(x) = \inf_{\xi \in \mathcal{V}_\uparrow}\mathbb{E} \bigg[ \int_0^\infty e^{ -\rho t} \big( (X_t^{1,x;\xi}-\theta)^2 + c(X_t^{2,x;\xi} - \vartheta) \big) dt + \int_{[0,\infty)}e^{ -\rho t} d\xi_t \bigg]. 
\end{equation*} 
Here, $c \in C_{loc}^{2;1}(\R)$ is a nonnegative, nondecreasing, convex, Lipschitz continuous function such that $c(y)=0$ for $y\leq 0$, and with bounded second order derivative. 
In light of the discussion in Subsections \ref{section refinements} and \ref{subsection some remark},  the optimal control for $V$ can be characterized as the solution to its related Skorokhod problem.
\end{example} 

We next turn our focus to 
examples of bounded-variation problems treated in the literature and for which our results apply.
\begin{example}\label{example chiarolla haussmann}
 We discuss the model studied in  \cite{chiarolla&haussmann2000}. For $d=2$, consider the singular control problem with running cost $h(x_1, x_2)=\nu x_1^2 + x_2^2$, for $\nu>0$, and drift $\bar{b}(x)=a+bx$, for a constant vector $a\in \mathbb{R}^2$ and a matrix 
$$
b = \begin{pmatrix}
b_1^1 & b_2^1 \\
b_1^2 & b_2^2
\end{pmatrix} \in \mathbb{R}^{2\times 2 },
$$
Observe that the requirements discussed in Subsection \ref{subsection Affine drift}, are satisfied by assuming $b_2^1=0$ and $\rho > 2 \Lambda(b)$.
Therefore, Theorem \ref{theorem main characterization} gives the optimal control as the solution of the related Skorokhod problem. 
The same result was obtained in \cite{chiarolla&haussmann2000} only under the additional assumption of a global Lipschitz-continuous  free boundary. 
\end{example}
\begin{comment}
\begin{example}
In combination with remarks REF and REF below (cf.\ Section REF), we discuss the set up from \cite{federico.pham2014}. Consider  $(X^1, Z)$ which evolves accordingly to the SDE
$$
\begin{cases}
X_t^1&=x_1 + v_t \\
dZ_t&= b^z(Z_t)dt + \sigma^z(Z_t)dW_t
\end{cases}
$$
Referring to Remark REF, we underline that the components of $X^1$ and $Z$ are not interconnected. Our characterization result then can be employed as soon as the conditions in Assumption \ref{assumption} are satisfied for
\end{example}
\end{comment}

\begin{example} 
 Another example of setup similar to ours has been studied in \cite{yang2014}, where a multidimesional singular control problem with $d \geq 2$ and constant drift and volatility is considered. There, the author shows the $C^2$-regularity of the value function, allowing for the characterization of the optimal policy as a solution to the related Skorokhod problem (even in the case of a state dependent cost of intervention). It is easy to see that, when the drift $\bar{b}$ is assumed to be constant, no monotonicity of the running cost $h$ is required in order to obtain our Theorem \ref{theorem main characterization}.
In comparison with \cite{yang2014}, our main result (cf.\ Theorem \ref{theorem main  characterization})  allows to characterize the optimal policy even in cases in which the dynamics are interconnected (at the cost of additional structural conditions on the running cost $h$).  
\end{example}

\begin{comment}
\begin{example}[A pollution model with convex emission function] A company producing a single good aims at modifying its production level, in order to maximize its profit coming from the sale of the produced good. The production generates some pollutants, through a convex emission function. 
If $v_t$ represents the cumulative intervention of the production up to time $t$, $P_t$ 
the production level at time $t \geq 0$,  and $S_t$ pollutants, we can write the dynamics:  
\begin{equation*}
\begin{cases}
    d P_t = P_t (a dt + \sigma^1 dW^1_t )+ dv_t, \ P_0 = x_1, \\ 
    dS_t = ( -\delta S_t + \phi(P_t))dt + \sigma^2 S_t dW_t^2, \ S_0 =x_2.      
\end{cases} 
\end{equation*} 
Here, $a \leq 0$ capture the idea that the production trend tends to decrease, due to usurance of production infrastructure. 
The function $\phi:\mathbb{R}\to \mathbb{R}$ is a convex increasing emission function, while the constant $ \delta > 0$
gives the decay of the number of pollutants in the air, which are naturally reabsorbed by the environment. 
The company aims at maximizing an expected profit of type
$$ 
\mathbb{E}\bigg[ \int_0^\infty e^{-\rho t} (h^1(P_t^{x;v}) + h^2(S_t^{x;v})) dt - c_1\int_{[0,\infty)}e^{-\rho t} d\xi_t^{+} - c_2\int_{[0,\infty)}e^{-\rho t} d\xi_t^{-} \bigg],
$$ 
for two concave, upper bounded increasing functions $h^1,\, h^2:\mathbb{R}\to \mathbb{R}$ such that $h(x_1,x_2)=-h^1(x_1)-h^2(x_2)$ satisfies Assumption \ref{assumption}. 
\end{example} 
\end{comment}   
\subsection{An example with degenerate  dynamics}\label{subsection example Patrick} A more involved discussion is required to treat the degenerate singular control problem studied in \cite{federico&ferrari&schuhmann2019} (see also  \cite{federico&ferrari&schuhumann2020b}).  

In this subsection, we take $d=2$, $h$ satisfying Condition \ref{assumption on h} in Assumption \ref{assumption}, $\bar{b}(x) =a+bx =(\bar{b}^1(x),\bar{b}^2(x))^{\textbf{\tiny{$\top$}}} $, and
\begin{equation}\label{eq dynamics patrick} 
a = \begin{pmatrix}
0  \\
 a^2
\end{pmatrix}, \quad
b = \begin{pmatrix}
0 & 0 \\
b_1^2 & b_2^2
\end{pmatrix},  \quad \sigma = \begin{pmatrix} 
0 & 0 \\ 
0 & \eta
\end{pmatrix}, \quad b_1^2,\, \eta,\, \rho >0, \quad b_2^2 < 0,  \quad h_{x_1 x_2} \geq 0.  
\end{equation} 
{In other words, $\bar{b}^1(x)=0$ and $\bar{b}^2(x)= a^2 + b_1^2 x_1 + b_2^2 x_2$ for any $x \in \R^2$.}
{In order to simplify the analysis of this example, assume  $ p = 2$. Observe that, in this case, all the requirements of Assumption \ref{assumption affine drift} are satisfied with $\Lambda(b)=0$, except from the nondegeneracy condition on ${\sigma}$.}    
The analysis of this subsection can be repeated also for $b_2^2=0$ and for a general $p \geq 1$.   
    
Despite in this example the matrix $ \sigma  \sigma ^{\textbf{\tiny{$\top$}}}$ is degenerate, the arguments in this paper can be employed in order to characterize the optimal control. However, some extra care is needed in order to prove the regularity of the value function inside the waiting region, which in fact follows from the properties of the free boundary proved in \cite{federico&ferrari&schuhmann2019} and \cite{federico&ferrari&schuhumann2020b}.

We begin the discussion by observing that results analogous to the ones contained in Appendix \ref{appendix auxiliary results} hold. In particular, Theorem \ref{theorem V sol HJB} can be shown by using a suitable perturbation of the matrix $\sigma$ (see the Appendix A in \cite{federico&ferrari&schuhmann2019}, for more details). The connection with Dynkin games holds as well (see Theorem 3.1 in \cite{federico&ferrari&schuhmann2019}), so that the arguments leading to Proposition \ref{proposition V_x1 nondecreasing in x2} (which make no use of the non-degeneracy of $\sigma \sigma^{\textbf{\tiny{$\top$}}}$) can be recovered. 

\subsubsection{Regularity of V in $\mathcal{W}$} We enforce an additional hypothesis, which is satisfied by $h(x)=|x|^2$ or $h(x)=(x_1+x_2)^2$. 
\begin{assumption}
\label{assumption additional for Patrick} \ 
\begin{enumerate}
\item $\lim_{x_2 \to \pm \infty} h_{x_2}(x_1,x_2)=\pm \infty$ for any $x_1 \in \mathbb{R}$;
\item One of the following holds true:
\begin{enumerate}
    \item $h_{x_1}(x_1, \cdot) $ is strictly increasing for any $x_1 \in \mathbb{R}$;
    \item $h_{x_1 x_2} = 0$ and $h(x_1,\cdot)$ is strictly convex for any $x_1 \in \mathbb{R}$.
\end{enumerate}
\end{enumerate}
\end{assumption}
As in Proposition 5.8 in \cite{federico&ferrari&schuhumann2020b} (see otherwise Proposition 4.25 at p.\ 92 in \cite{Schuhmann2021}),  under the additional  Assumption \ref{assumption additional for Patrick},  there exist two nonincreasing locally Lipschitz  continuous functions $g_1, \, g_2: \mathbb{R} \to \mathbb{R}$ such that $g_1 < g_2$ and
\begin{equation}  
\label{eq representation free boundary} 
\mathcal{I}_- = \{ x \in \mathbb{R}^2 \, | \, x_2 \leq g_1(x_1) \} \quad \text{and} \quad \mathcal{I}_+ = \{ x \in \mathbb{R}^2 \, | \, x_2 \geq g_2(x_1) \}.
\end{equation}
For each $x \in \mathbb{R}^2$,  recall the definition of $\bar{\tau}_1, \, \bar{\tau}_2$ given in Theorem \ref{theorem Dynkin game connection} and define the stopping times
\begin{equation}
    \bar{\tau}_1^\delta:=\inf \{ t\geq 0 \,|\, X_t^{x+\delta e_1} \in \mathcal{I}_- \}, \quad \bar{\tau}_2^\delta:=\inf \{ t\geq 0 \,|\, X_t^{x+\delta e_1} \in \mathcal{I}_+ \}, \quad \delta\in \mathbb{R}.
\end{equation} 
The Lipschitz continuity of $g_1$ and of $g_2$ allows to prove the following lemma. 
\begin{lemma}\label{lemma continuity stopp times} Under the additional Assumption \ref{assumption additional for Patrick}, for $x\in \mathbb{R}^2$, we have
\begin{equation*}
    \lim_{\delta \to 0} \bar{\tau}_1^\delta = \bar{\tau}_1, \quad \text{and} \quad 
    \lim_{\delta \to 0} \bar{\tau}_2^\delta = \bar{\tau}_2, \quad \mathbb{P}\text{-a.s.}
\end{equation*} 
\end{lemma}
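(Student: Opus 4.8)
The plan is to prove the two limits separately; by symmetry, I will describe the argument for $\bar\tau_1^\delta \to \bar\tau_1$, the case of $\bar\tau_2^\delta$ being entirely analogous (exchanging the roles of $g_1,g_2$ and of $\mathcal{I}_-,\mathcal{I}_+$, and the direction of the relevant monotonicities). The key structural facts I will use are: first, since the dynamics \eqref{SDE dynamics uncontrolled} in this degenerate setting has $X^{1,x}_t \equiv x_1$ (the first component is frozen, as $a^1 = b^1_1 = b^1_2 = 0$ and $\bar\sigma \equiv 0$ on the first row), the perturbation $x \mapsto x + \delta e_1$ shifts only the first coordinate, so that $X^{x+\delta e_1}_t = (x_1 + \delta, \, X^{2,x+\delta e_1}_t)$; and second, by a comparison principle for the SDE governing the second component (monotonicity of $b^2$ in $x_1$ through $b_1^2 > 0$, as in the proof of Proposition \ref{proposition V_x1 nondecreasing in x2}), $\delta \mapsto X^{2,x+\delta e_1}_t$ is monotone. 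Combined with the representation \eqref{eq representation free boundary} of $\mathcal{I}_-$ as the subgraph of the locally Lipschitz nonincreasing function $g_1$, this reduces the hitting-time convergence to a continuity statement for first-entrance times of a continuous-in-$\delta$ family of processes into a fixed closed region with Lipschitz boundary.

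Concretely, I would first record that $\bar\tau_1^\delta = \inf\{t \geq 0 \,|\, X^{2,x+\delta e_1}_t \leq g_1(x_1+\delta)\}$. Upper semicontinuity of $\delta \mapsto \bar\tau_1^\delta$ at $\delta = 0$ (i.e.\ $\limsup_{\delta\to 0}\bar\tau_1^\delta \leq \bar\tau_1$, $\mathbb{P}$-a.s.) is the easy direction: on $\{\bar\tau_1 < \infty\}$, the strict inequality structure around the free boundary together with the continuity of $g_1$ and of $(\delta,t)\mapsto X^{2,x+\delta e_1}_t$ lets one enter $\mathcal{I}_-$ slightly after $\bar\tau_1$ for the perturbed process once $|\delta|$ is small; on $\{\bar\tau_1 = \infty\}$ there is nothing to prove for the $\limsup$ since the claim would then read $\limsup \bar\tau_1^\delta \leq \infty$, so the content is really about the lower bound there. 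For lower semicontinuity ($\liminf_{\delta\to 0}\bar\tau_1^\delta \geq \bar\tau_1$), I would argue by contradiction: if along a sequence $\delta_k \to 0$ one had $\bar\tau_1^{\delta_k} \to s < \bar\tau_1$ on a set of positive probability, then passing to the limit in $X^{2,x+\delta_k e_1}_{\bar\tau_1^{\delta_k}} \leq g_1(x_1 + \delta_k)$ using joint continuity and the Lipschitz continuity of $g_1$ yields $X^{2,x}_s \leq g_1(x_1)$, i.e.\ $X^x_s \in \mathcal{I}_-$, forcing $\bar\tau_1 \leq s$, a contradiction. Here one must be a little careful with the passage to the limit at the random time — I would use the uniform (in $\delta$ on compacts) modulus of continuity of the continuous semimartingale $X^{2,\cdot}$ on $[0,T]$, plus a localization/stopping argument to handle $\{\bar\tau_1 = \infty\}$ and to reduce to a bounded time horizon.

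The step I expect to be the main obstacle is handling the direction of entry at the free boundary — specifically, ruling out pathological ``grazing'' behaviour where the perturbed process touches but does not cross the shifted boundary, or where $X^{2,x}$ approaches $g_1(x_1)$ tangentially so that arbitrarily small perturbations of $x_1$ could delay the hitting time discontinuously. This is where the monotonicity of $g_1$ (and its local Lipschitz property, hence a.e.\ differentiability) combines with the non-degeneracy of the second-component diffusion ($\eta > 0$): the law of $X^{2,x}$ has a density, so the process does not spend positive time on the graph $\{x_2 = g_1(x_1)\}$ and, more usefully, hits the closed region $\mathcal{I}_-$ at a point in its (relative) interior or crosses its boundary transversally, $\mathbb{P}$-a.s.; a standard argument (regular points for the generator, or the fact that a one-dimensional diffusion immediately enters an interval once it reaches its endpoint) then shows that for $\mathbb{P}$-a.e.\ $\omega$ the process strictly penetrates $\mathcal{I}_-$ right after $\bar\tau_1$. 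With that in hand, the $\limsup$ direction follows by a sandwiching argument using the monotonicity in $\delta$ and the continuity of $g_1$. I would then remark that the statement for $\bar\tau_2^\delta$ is proved identically, and that both limits combine (via dominated convergence, using $e^{-\hat\rho \bar\tau_i^\delta}$ bounded) to give continuity of the relevant Dynkin-game payoffs in $\delta$, which is the use to which this lemma is subsequently put.
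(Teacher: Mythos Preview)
Your overall strategy---reduce to a one-dimensional hitting-time problem for the subgraph of $g_1$, handle the two inequalities separately, and identify the genuine difficulty as the strict penetration of the boundary after $\bar\tau_1$---matches the paper's. The paper also argues the hard direction by contradiction, but makes the penetration step explicit via the law of the iterated logarithm applied to the Brownian increment $W_{\bar\tau_1+s}-W_{\bar\tau_1}$: this produces a sequence $s_k\downarrow 0$ along which $\eta(W_{\bar\tau_1+s_k}-W_{\bar\tau_1})\geq \eta\sqrt{s_k}$, dominating the drift contribution (order $s_k$) and the $\delta$-shift of both the process and the threshold (order $\delta_j$, using the local Lipschitz continuity of $g_1$ and the explicit formula \eqref{eq patrick explicit SDE}). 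Your appeal to ``regular points for the generator'' or ``a one-dimensional diffusion immediately enters an interval once it reaches its endpoint'' is the abstract version of the same fact; either works.

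Two minor corrections. First, you have the labeling of easy and hard backwards: the $\liminf$ direction ($\liminf_{\delta\to 0}\bar\tau_1^\delta\geq\bar\tau_1$) is the soft one, following from closedness of $\mathcal{I}_-$ and joint continuity exactly as you argue; the $\limsup$ direction is the one needing the penetration argument, as you yourself recognize later in the proposal. (The paper in fact dispatches the $\liminf$ side for $\delta>0$ even more cheaply, via the monotonicity $V_{x_1}(X_t^{x+\delta e_1})\geq V_{x_1}(X_t^x)$ coming from convexity of $V$ and Proposition~\ref{proposition V_x1 nondecreasing in x2}.) Second, your remark that the process ``hits the closed region $\mathcal{I}_-$ at a point in its (relative) interior'' is not right: it hits the boundary $\{x_2=g_1(x_1)\}$, and the entire content of the regularity argument is that it then immediately enters the open subgraph.
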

\begin{proof} We only prove the first of the two limits for $\delta \to 0^+$,  since the same limit for $\delta \to 0^-$ follows by identical arguments, and the second limit can be proved in the same way. 
We first observe that, since $g_1$ is finite, we have $\mathbb{P}[\bar{\tau}_1<\infty]=1$. 
{Also, when $\bar \delta >\delta >0$, we have, by convexity of $V$ and by Proposition \ref{proposition V_x1 nondecreasing in x2}, that 
\begin{align*} 
V_{x_1}(x_1 + \bar \delta , X_t^{2, x + \bar \delta e_1}) 
& \geq V_{x_1}(x_1  +  \delta, X_t^{2, x + \bar \delta e_1}) \\ 
& \geq 
V_{x_1}(x_1 + \delta , X_t^{2, x+\delta e_1}) \geq V_{x_1}(x_1, X_t^{2, x+\delta e_1}) \geq V_{x_1}(x_1 , X_t^{2, x}),
\end{align*}
from which we deduce that 
\begin{equation}\label{eq tau delta > tau}
\bar{\tau}_1^{\bar \delta}  \geq \bar{\tau}_1^\delta \geq \bar{\tau}_1, \quad \mathbb{P}\text{-a.s.}
\end{equation}
%Therefore, we can assume that $\bar{\tau}_1 < \infty$, otherwise the thesis follows from  \eqref{eq tau delta > tau}. 
We continue the proof arguing by contradiction. In light of \eqref{eq tau delta > tau}, suppose that there exists $E \in \mathcal{F}$, with $\mathbb{P}[E]>0$, such that for each $\omega \in E$ there exists $\varepsilon(\omega)>0$ and a sequence $(\delta_j)_{j\in \mathbb{N}}$ (not depending on $\omega$) with $\delta_j >0$ and $\delta_j \to 0$ as $j\to \infty$, for which $\bar{\tau}_1^{\delta_j}(\omega) > \bar{\tau}_1(\omega) + \varepsilon(\omega) $ for each $j \in \mathbb{N}$.} 
Using the representation in \eqref{eq representation free boundary},  (dropping the dependence on $\omega$ to simplify the notation) this is equivalent to
\begin{equation}\label{eq assume by contradiction}
\text{$X_{\bar{\tau}_1}^{2,x}\leq g_1(x_1) $ and $X_{\bar{\tau}_1+s}^{2,x+{\delta_j} e_1} > g_1(x_1 +{\delta_j})$, for each $s \in [0,\varepsilon], \, j \in \mathbb{N}$.} 
\end{equation} 
Notice that, due to the particular structure of the dynamics, we have
\begin{equation}\label{eq patrick explicit SDE}
X_{s}^{2,x+{\delta_j} e_1}=X_{s}^{2,x} +{\delta_j} {b_1^2}\big(e^{b_2^2s}-1 \big)/{b_2^2}, \quad s \geq 0, \ j \in \mathbb{N},
\end{equation}
from which we can write
\begin{align*}
X_{\bar{\tau}_1}^{2,x}&= (X_{\bar{\tau}_1}^{2,x}-X_{\bar{\tau}_1+s}^{2,x})+X_{\bar{\tau}_1+s}^{2,x}\\
&=-\int_0^s(a^2 +b_1^2x_1 +b_2^2X_{\bar{\tau}_1+r}^{2,x})dr - \eta (W_{\bar{\tau}_1+s} - W_{\bar{\tau}_1}) +X_{\bar{\tau}_1+s}^{2,x+{\delta_j} e_1} -{\delta_j} {b_1^2}\big(e^{b_2^2(\bar{\tau}_1+s)}-1 \big)/{b_2^2},  
\end{align*}  
From the latter equality, using \eqref{eq assume by contradiction}, by Lipschitz continuity of $g_1$ {(with Lipschitz constant $L_{g_1}$)}, and pathwise boundedness of $X^{2,x}$ and of $\bar{\tau}_1$, we obtain  
\begin{equation}\label{eq estimate to get contradiction}
 X_{\bar{\tau}_1}^{2,x} \geq 
 -{\delta_j} C - sC - \eta (W_{\bar{\tau}_1+s} - W_{\bar{\tau}_1}) +g_1(x_1) - L_{g_1} {\delta_j}, \text{ for each $s \in [0,\varepsilon], \, j \in \mathbb{N}$,} 
\end{equation}
where the constant $C$ depends on $\bar{\tau}_1$ (which is finite, by assumption) and on $\sup_{r\in [0,\varepsilon]} X_{\bar{\tau}_1 + r}^{2,x} $, but it is independent of $s$ and $j$.
Next, by the law of iterated logarithm (see, e.g., Theorem 9.23 at p.\ 112 in \cite{karatzas&shreve1998}) we find a sequence $(s_k)_{k\in \mathbb{N}}$ converging to zero and $\bar{k}\in \mathbb{N}$ (depending on $\omega$) such that
\begin{equation}\label{eq law iterated logarithm}
- (W_{\bar{\tau}_1+s_k} - W_{\bar{\tau}_1}) \geq \sqrt{s_k} \sqrt{ \log \log (1/s_k)} \geq  \sqrt{s_k}, \quad \text{for each $k\geq\bar{k}$}. 
\end{equation}
Finally, from \eqref{eq estimate to get contradiction} and \eqref{eq law iterated logarithm}, for suitable choice of ${\delta_j}$ and $s_k$, we conclude that
$$
 X_{\bar{\tau}_1}^{2,x} \geq 
 -{\delta_j} (C+L_{g_1})  +  \sqrt{s_k} (\eta  -C \sqrt{s_k}) +g_1(x_1) > g_1(x_1), 
$$
which contradicts \eqref{eq assume by contradiction}, and therefore completes the proof of the lemma. 
\end{proof}
 
\begin{lemma}\label{lemma V C2 patrick}
 Under the additional Assumption \ref{assumption additional for Patrick}, we have $V \in C^2(\mathcal{W})$.
\end{lemma}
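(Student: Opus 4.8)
The plan is to split the second derivatives of $V$ into a ``good'' one, $V_{x_2x_2}$, which will be handled by the interior equation, and the two ``degenerate'' ones, $V_{x_1x_1}$ and $V_{x_1x_2}$, which cannot be controlled by any PDE argument (the operator $\mathcal L$ carries no $x_1$-derivatives) and will be obtained from the probabilistic representation of $V_{x_1}$ together with the free-boundary description \eqref{eq representation free boundary} and Lemma~\ref{lemma continuity stopp times}. First, as in Theorem~\ref{theorem V sol HJB} (whose analog holds here after the perturbation of $\sigma$ described above), $V\in W_{loc}^{2;\infty}(\mathcal W)$, so $V\in C^1(\mathcal W)$ with locally Lipschitz gradient, and $V$ solves $\rho V-\mathcal L V=h$ a.e.\ in $\mathcal W$. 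Since here $\mathcal L V=(a^2+b_1^2x_1+b_2^2x_2)V_{x_2}+\tfrac12\eta^2 V_{x_2x_2}$, this rewrites as
\[
V_{x_2x_2}(x)=\frac{2}{\eta^2}\Big(\rho V(x)-(a^2+b_1^2x_1+b_2^2x_2)V_{x_2}(x)-h(x)\Big),\qquad x\in\mathcal W,
\]
whose right-hand side is continuous on $\mathcal W$; hence $V_{x_2x_2}\in C(\mathcal W)$, and it remains only to prove $V_{x_1}\in C^1(\mathcal W)$, i.e.\ $V_{x_1x_1},V_{x_1x_2}\in C(\mathcal W)$.

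Next I would exploit the one-dimensional structure hidden in the degeneracy. Because $\bar b^1\equiv0$, the first component of the uncontrolled process is frozen, $X_t^{1,x}\equiv x_1$, and by \eqref{eq representation free boundary} the section $\mathcal W_1(x_1)$ is the interval $(g_1(x_1),g_2(x_1))$, on which $x_2\mapsto V_{x_1}(x_1,x_2)$ solves the linear ODE $\tfrac12\eta^2u''+(a^2+b_1^2x_1+b_2^2x_2)u'-\rho u=-\hat h(x_1,x_2)$ with $\hat h=h_{x_1}+b_1^2V_{x_2}$, subject to the \emph{constant} Dirichlet data $u(g_1(x_1))=-1$, $u(g_2(x_1))=1$ and the smooth fit $u'(g_i(x_1))=0$ (the latter because $V_{x_1}$ is $C^{1,1}$ in $x_2$ and equals $-1$ on $\mathcal I_-$, $1$ on $\mathcal I_+$). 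Differentiating the identity $V_{x_1}(x_1,g_i(x_1))=\pm1$ in $x_1$ and using the smooth fit yields, a.e.\ in $x_1$, that $\partial_{x_1}V_{x_1}$ vanishes on $\partial\mathcal W$. The decisive remark is that, although $g_1,g_2$ are only locally Lipschitz, all the relevant boundary data ($\pm1$ for $V_{x_1}$, and $0$ for its $x_1$-derivative) are constant, so continuity in the parameter $x_1$ of the solutions of these boundary-value problems is not obstructed by the merely Lipschitz dependence of the endpoints. To close the loop I must still control the $x_1$-dependence of $\hat h$, i.e.\ of $V_{x_2}$, which couples the ODE back to $V$ itself; this I would do through the probabilistic side. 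By the analogs of Theorems~\ref{theorem connection Dynkin game PDE}--\ref{theorem Dynkin game connection}, for $x\in\mathcal W$
\[
V_{x_1}(x)=\mathbb E\Big[\int_0^{\bar\tau(x)}e^{-\rho t}\hat h(X_t^x)\,dt-e^{-\rho\bar\tau_1(x)}\mathds 1_{\{X^x_{\bar\tau(x)}\in\mathcal I_-\}}+e^{-\rho\bar\tau_2(x)}\mathds 1_{\{X^x_{\bar\tau(x)}\in\mathcal I_+\}}\Big],
\]
with $\bar\tau(x)=\bar\tau_1(x)\wedge\bar\tau_2(x)$ the exit time of $X^x$ from $\mathcal W$; by \eqref{eq patrick explicit SDE} and its counterpart $X_t^{2,x+\delta e_2}=X_t^{2,x}+\delta e^{b_2^2 t}$ the map $x\mapsto X_t^x(\omega)$ is affine, and by Lemma~\ref{lemma continuity stopp times}, whose proof applies verbatim to $e_2$-perturbations, $\bar\tau_i(x+\delta e_j)\to\bar\tau_i(x)$ $\mathbb P$-a.s.\ as $\delta\to0$. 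A dominated-convergence argument (with uniform integrability from the moment bounds of Lemma~\ref{lemma estimate SDE} and the $W_{loc}^{2;\infty}$-bound on $V_{x_2}$), in which the first-order variation of $\bar\tau_1,\bar\tau_2$ contributes nothing because of the smooth fit $\partial_{x_1}V_{x_1}=0$ on $\partial\mathcal W$, then gives that $V_{x_2}$ is $C^1$ in $x_1$, hence $\hat h$ is $C^1$ in $x_1$; feeding this into the ODE description of the sections $V_{x_1}(x_1,\cdot)$ (and using $V_{x_1x_2x_2}$ from the differentiated ODE) yields $V_{x_1}\in C^1(\mathcal W)$, and with the first step $V\in C^2(\mathcal W)$.

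The hard part will be precisely this last coupling under the $x_1$-degeneracy of $\mathcal L$: no interior regularity in $x_1$ is available from the equation, so the $x_1$-continuity of $V_{x_1x_2}$ and of $V_{x_2}$ must be propagated simultaneously, and one has to check carefully that the Lipschitz regularity of $g_1,g_2$ is enough — in particular that the first-order variation of the exit times really cancels — relying on the constancy of the boundary data on one side and on the a.s.\ convergence of the exit times from Lemma~\ref{lemma continuity stopp times} on the other (which is also the place where the non-degeneracy $\eta>0$ enters, via the law of the iterated logarithm). A cleaner organization, worth trying, is to run the same representation/differentiation argument for $V$ rather than for $V_{x_1}$: on $\mathcal W$ one has $V(x)=\mathbb E[\int_0^{\bar\tau(x)}e^{-\rho t}h(X_t^x)\,dt+e^{-\rho\bar\tau(x)}V(X^x_{\bar\tau(x)})]$, where the running cost $h$ is already $C^{2;1}_{loc}$ and, since $X^{1,x}_{\bar\tau}=x_1$ and $V_{x_1}\equiv\mp1$ on $\mathcal I_\mp$, the boundary term equals $\mp x_1+\phi_\mp(X^{2,x}_{\bar\tau})$ with $\phi_\mp\in W^{2;\infty}_{loc}$, so the $x_1$-dependence is essentially explicit; this avoids the $V_{x_2}$-circularity and feeds $V_{x_1x_2}=\partial_{x_1}V_{x_2}\in C(\mathcal W)$ back into the equation for $V_{x_1}$.
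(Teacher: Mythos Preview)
Your overall plan --- first handle $V_{x_2x_2}$ via the interior equation, then obtain the $x_1$-derivatives from the Dynkin-game representation together with Lemma~\ref{lemma continuity stopp times} --- matches the paper's two-step structure, but the execution of the PDE part is too weak and this creates a genuine gap in the probabilistic part.

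The concrete issue is the circularity you yourself flag. To differentiate the representation of $V_{x_1}$ in $x_1$ you need $\hat h=h_{x_1}+b_1^2V_{x_2}$ to be $C^1$ (or at least continuous) in $x_1$, i.e.\ you need $V_{x_1x_2}$ continuous; your argument for this rests on the smooth-fit claims $V_{x_1x_2}=0$ and $V_{x_1x_1}=0$ on $\partial\mathcal W$, obtained by differentiating $V_{x_1}(x_1,g_i(x_1))=\pm1$. But at this stage you only know $V\in W^{2;\infty}_{loc}$, which gives $V_{x_1}$ Lipschitz in $x_2$, not $C^{1,1}$, so $V_{x_1x_2}$ is merely $L^\infty$ and need not have a well-defined trace on $\partial\mathcal W$; likewise $g_i$ is only Lipschitz, so the chain-rule identity giving $V_{x_1x_1}=0$ on the boundary is not justified. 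Without these smooth-fit conditions the boundary-value-problem description of the sections $V_{x_1}(x_1,\cdot)$ cannot be used to propagate $x_1$-regularity, and your dominated-convergence argument (``the first-order variation of $\bar\tau_i$ contributes nothing because of the smooth fit'') loses its key ingredient. The alternative you suggest at the end, representing $V$ instead of $V_{x_1}$, does not escape this: the boundary term $e^{-\rho\bar\tau}V(X^x_{\bar\tau})$ is only $W^{2;\infty}$ in the exit point, and its second $x_1$-derivative still involves the unknown $V_{x_1x_1}$ together with the variation of $\bar\tau$.

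The paper avoids the circularity by replacing your boundary-value-problem step with \emph{interior} Schauder estimates for the $x_2$-ODE, iterated three times. Writing the HJB equation as an ODE in $x_2$ with $x_1$ as a parameter, one first estimates the difference $W(x_2)=V(y_1,x_2)-V(x_1,x_2)$ to get $V_{x_2},V_{x_2x_2}$ jointly Lipschitz; this makes the $x_1$-weak derivative of the equation meaningful, giving an ODE for $V_{x_1}$ whose forcing $h_{x_1}+b_1^2V_{x_2}$ is now Lipschitz, and a second Schauder estimate yields $V_{x_1x_2}$ jointly Lipschitz; a third iteration gives $V_{x_1x_1}(x_1,\cdot)$ Lipschitz in $x_2$, uniformly in $x_1$. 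Only then does the paper turn to the probabilistic representation, and only for the single remaining task of proving $V_{x_1x_1}$ continuous in $x_1$: with $V_{x_1x_2}$ already Lipschitz, $\hat h_{x_1}$ and $\hat h_{x_2}$ are continuous, so the incremental quotient of $V_{x_1}$ yields the explicit formula $V_{x_1x_1}(x)=\mathbb E\big[\int_0^{\bar\tau_1\wedge\bar\tau_2}e^{-\rho t}\big(\hat h_{x_1}(X_t^x)+\hat h_{x_2}(X_t^x)\,b_1^2(e^{b_2^2t}-1)/b_2^2\big)\,dt\big]$, and Lemma~\ref{lemma continuity stopp times} together with dominated convergence gives continuity in $x_1$. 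No smooth-fit or boundary condition is ever invoked; the Schauder estimates are purely interior. If you want to salvage your route, the missing piece is precisely this bootstrap: establish $V_{x_1x_2}\in C(\mathcal W)$ (in fact Lipschitz) \emph{before} differentiating the game value, and the cleanest way to do that under the $x_1$-degeneracy is the iterated interior Schauder argument above.
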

\begin{proof} We split the proof in two steps. 
\smallbreak \noindent
\emph{Step 1.} 
Take $z\in \mathcal{W}$ and  $\varepsilon >0$ such that $B_\varepsilon^1 (z) \times B_\varepsilon^2(z)  \subset \mathcal{W}$, where $B_\varepsilon^1 (z):=\{ x_1 \in \mathbb{R} \, | \, |z_1-x_1|<\varepsilon \}$ and $B_\varepsilon^2 (z):=\{ x_2 \in \mathbb{R} \, | \,  |z_2-x_2|<\varepsilon \}$. We prove that $V_{x_2 x_2}, V_{x_1 x_2}$ are locally Lipschitz in $B_\varepsilon^1(z) \times B_\varepsilon^2(z)$ and that  $V_{x_1 x_1}(x_1,\cdot)$ is locally Lipschitz in $B_\varepsilon^2(z)$ for each $x_1 \in B_\varepsilon^1(z)$.

We begin by observing that, under \eqref{eq dynamics patrick}, the HJB equation can be regarded as a second order ordinary differential equation (ODE, in short) in the variable $x_2 \in \mathbb{R}$ depending on the parameter $x_1 \in \mathbb{R}$. In particular, $V$ solves the equation
\begin{equation}\label{eq HJB parametric}
\rho V - \bar{b}^2 V_{x_2} - ({\eta^2}/{2}) V_{x_2 x_2}=h, \quad \text{for a.a.\ $ x_2 \in B_\varepsilon^2(z)$, for each fixed $x_1 \in B_\varepsilon^1(z)$}.    
\end{equation}
Therefore we have $V(x_1,\cdot) \in C^{4;1}(B_{\varepsilon}^2(z))$, for each $x_1 \in  B_\varepsilon^1(z)$. Next, for any $y_1, \, x_1 \in B_\varepsilon^1(z)$ we define the function $W(x_2):=V(y_1,x_2)-V(x_1,x_2), \ x_2 \in B_\varepsilon^2(z)$, which satisfies the ODE
\begin{equation*}
\rho W - \bar{b}^2(y_1,\cdot)W_{x_2} - ({\eta^2}/{2}) W_{x_2 x_2} = F, \quad x_2 \in B_\varepsilon^2(z),  
\end{equation*} 
where 
$
 F=h(y_1,\cdot)-h(x_1,\cdot) + b_1^2 V_{x_2}(x_1,\cdot)(y_1-x_1).  
$
Therefore, by employing Schauder interior estimates (see Theorem 6.2 at p.\ 90 in \cite{gilbarg2001}), we obtain
$$
\| W \|_{C^{2;1}(B_{{\varepsilon}/{2}}^2(z))}\leq C ( \|W \|_{C^{0}(B_{\varepsilon}^2(z))} + \| F \|_{C^{0;1}(B_{{\varepsilon}}^2(z))}).
$$
Moreover, by the $W_{loc}^{2;\infty}$-regularity of $V$ (cf. Theorem \ref{theorem V sol HJB} in Appendix \ref{appendix auxiliary results}), the function $F$ is Lipschitz in $B_\varepsilon^2(z)$ (uniformly for $y_1,x_1 \in B_\varepsilon^1 (z)$). 
Thus, the latter estimate implies that 
$$
\|V(y_1,\cdot)-V(x_1,\cdot)\|_{C^{2;1}(B_{{\varepsilon}/{2}}^2(z))}\leq C |y_1-x_1|,
$$
for a constant $C$ which is independent from $y_1$ and $ x_1$, as long as they are elements of $B_\varepsilon^1(z)$. 
Hence, the functions $V, \, V_{x_2}, \, V_{x_2 x_2} $ are Lipschitz continuous in $B_\varepsilon^1 (z) \times B_{{\varepsilon}/{2}}^2(z)$.

We can therefore compute the weak derivative of \eqref{eq HJB parametric} with respect to $x_1$, obtaining, for each fixed $x_1 \in B_\varepsilon^1(z)$, the ODE 
\begin{equation}\label{eq HJB_x1 parametric}
\rho V_{x_1} - \bar b ^2 V_{x_1 x_2 } - ({\eta^2}/{2}) V_{x_1 x_2 x_2 }=h_{x_1} + b_1^2V_{ x_2}, \quad \text{for a.a.\ $ x_2 \in B_{\varepsilon/2}^2(z)$.}    
\end{equation} 
Since $V_{x_2 x_2}$ is Lipschitz, we have $V_{x_1}(x_1,\cdot) \in C^{3;1}(B_{\varepsilon/2}^2(z))$, for each $x_1 \in  B_\varepsilon^1(z)$. Also, we can again define a  function $W^1(x_2):=V_{x_1}(y_1,x_2)-V_{x_1}(x_1,x_2), \ x_2 \in B_{\varepsilon/2}^2(z)$, which satisfies the elliptic equation 
\begin{equation*}
\rho W^1 - \bar b ^2(y_1,\cdot)W_{x_2}^1 - ({\eta^2}/{2}) W_{x_2 x_2}^1 = F^1, \quad x_2 \in B_{\varepsilon/2}^2(z),  
\end{equation*}
where 
$
F^1=h_{x_1 }(y_1,\cdot)-h_{x_1}(x_1,\cdot) +  b_1^2 (V_{ x_2}(y_1,\cdot)- V_{ x_2}(x_1,\cdot)) + b_1^2V_{x_1 x_2}(x_1,\cdot)  (y_1-x_1).   
$
By employing again Schauder interior estimates, we obtain 
$$
\| W^1 \|_{C^{2;1}(B_{{\varepsilon}/{3}}^2(z))}\leq C ( \|W^1 \|_{C^{0}(B_{\varepsilon/2}^2(z))} + \| F^1 \|_{C^{0;1}(B_{{\varepsilon/2}}^2(z))}).
$$
This, by the local Lipschitz continuity of $V_{x_2}$ and  $V_{x_1 x_2}$ (since we have shown that $V_{x_1 x_2 x_2}$ exists bounded) in the variable $x_2$,  implies that
$$
\|V_{x_1}(y_1,\cdot)-V_{x_1}(x_1,\cdot)\|_{C^{2;1}(B_{{\varepsilon}/{2}}^2(z))}\leq C |y_1-x_1|;
$$
that is, the functions $V_{x_1}, \, V_{x_1 x_2}, \, V_{x_1 x_2 x_2} $ are Lipschitz continuous in $B_\varepsilon^1 (z) \times B_{\varepsilon/3}^2(z)$.

This allows to compute once more  the weak derivative w.r.t.\ $x_1$ in equation \eqref{eq HJB_x1 parametric}, obtaining for each fixed $x_1 \in B_\varepsilon^1(z)$, the ODE 
\begin{equation}\label{eq HJB_(x_1 x_1) parametric}
\rho V_{x_1 x_1} -\bar b ^2 V_{x_1 x_1 x_2} - ({\eta^2}/{2}) V_{x_1 x_1 x_2 x_2 } = h_{x_1 x_1} + 2 b_1^2 V_{x_1 x_2}, \quad \text{for a.a.\ $ x_2 \in B_{\varepsilon/3}^2(z)$.}    
\end{equation}
Therefore, since we have shown that $V_{x_1 x_2}$ is Lipschitz, after employing one more time Schauder interior estimates, we obtain
$$
\| V_{x_1 x_1} \|_{C^{2;1}(B_{{\varepsilon}/{4}}^2(z))}\leq C ( \| V_{x_1 x_1} \|_{C^{0}(B_{\varepsilon/3}^2(z))} + \| h_{x_1 x_1} + 2 b_1^2 V_{x_1 x_2} \|_{C^{0;1}(B_{{\varepsilon/3}}^2(z))}) \leq C, \quad x_1 \in B_\varepsilon^1(z),
$$
for $C$ large enough, not depending on $x_1$.
In particular we deduce that $V_{x_1 x_1}(x_1, \cdot)$ is Lipschitz in $B_{\varepsilon/4}^2(z)$, with Lipschitz constant uniformly bounded for $x_1 \in B_\varepsilon^1(z)$.

\smallbreak \noindent
\emph{Step 2.} We now prove that $V_{x_1 x_1}(\cdot,x_2)$ is continuous in $\mathcal{W}_1(x_2)$ (see Lemma \ref{lemma waiting region nonempty}), for each $x_2 \in \mathbb{R}$. This is done by employing a direct computation to find an expression for $V_{x_1 x_1}$.  

Fix $x \in \mathcal{W}$ and let $\hat{h}:=h_{x_1} + b_1^2 V_{x_2}$ be as in Theorem \ref{theorem connection Dynkin game PDE}.  
For $\delta>0$, from \eqref{eq tau delta > tau} in the proof of Lemma \ref{lemma continuity stopp times}, we have $\bar{\tau}_1^\delta \geq \bar{\tau}_1$. Then, from \eqref{eq patrick explicit SDE} and Theorem \ref{theorem Dynkin game connection}, we write  
\begin{align}\label{eq direct computetion rapporto incrementale}
& \frac{V_{x_1}(x+\delta e_1) -V_{x_1}(x) }{\delta} \leq \frac{G(x+\delta e_1;\bar{\tau}_1^\delta,\bar{\tau}_2)-G(x;\bar{\tau}_1^\delta,\bar{\tau}_2)}{\delta} \\ \notag
 & = \mathbb{E}\bigg[  \int_0^{\bar{\tau}_1^\delta \land \bar{\tau}_2} e^{-\rho t} \bigg(\frac{\hat{h}( X_t^{x+\delta e_1}) - \hat{h}(X_t^x) }{\delta} \bigg) dt \bigg]\\ \notag 
&= \mathbb{E}\bigg[  \int_0^{\bar{\tau}_1 \land \bar{\tau}_2}  \int_0^1e^{-\rho t} \Big( \hat{h}_{x_1}( Z_t^{\delta,r}) +\hat{h}_{x_2}( Z_t^{\delta,r}) {b_1^2}(e^{b_2^2 t} - 1) /{b_2^2} \Big)dr dt \bigg] \\ \notag
&\quad + \mathbb{E}\bigg[  \int_{\bar{\tau}_1 \land \bar{\tau}_2}^{\bar{\tau}_1^\delta \land \bar{\tau}_2}  \int_0^1e^{-\rho t} \Big( \hat{h}_{x_1}( Z_t^{\delta,r}) +\hat{h}_{x_2}( Z_t^{\delta,r}) {b_1^2}(e^{b_2^2 t} - 1) /{b_2^2}\Big) dr dt \bigg]=:M_1^\delta + M_2^\delta, \notag
\end{align} 
where $Z_t^{\delta,r}:=X_t^x+ r(X_t^{x+\delta e_1} - X_t^x)$.
Next, in order to study $M_1^\delta$ and $M_2^\delta$, define
\begin{equation}\label{eq def H}
H(t,y):=\hat{h}_{x_1}( y) +\hat{h}_{x_2}(y) {b_1^2}(e^{b_2^2 t} - 1)/{b_2^2},\quad t\geq 0, \ y\in \mathbb{R}^2.
\end{equation}
Notice that, by \eqref{eq dynamics patrick}, Proposition \ref{proposition V_x1 nondecreasing in x2} (see the discussion in Subsection \ref{subsection Affine drift}) and the convexity of $V$ we have $h_{x_1 x_1},\, b_1^2 h_{x_1 x_2},\, b_1^2 V_{x_1 x_2},\, V_{x_2 x_2} \geq 0$, and hence 
\begin{equation}\label{eq H >0}
H \geq 0.
\end{equation}
Moreover, since $p=2$, from Proposition 2.4 in  \cite{federico&ferrari&schuhmann2019}, for each $\bar{y},\, y \in \mathbb{R}^2$, and $\lambda \in [0,1]$, we have 
\begin{equation}\label{eq estimate on the value function from patrick}
\lambda V(\bar{y}) +(1-\lambda) V(y) -V( \lambda \bar{y} + (1-\lambda) y) \leq K \lambda(1-\lambda)|\bar{y}-y|^2,
\end{equation} 
for some $K>0$. 
{
This semiconcavity estimate, together with  Condition \ref{assumption on h} in Assumption \ref{assumption}, implies that (see, e.g., Proposition 1.1.3 at p.\ 2 in \cite{cannarsa&sinestrani2004}) both $V$ and $h$ have bounded (weak) second order derivatives. 
Then, using the definition of $\hat h$ and the fact that $0 \leq {b_1^2}(e^{b_2^2 t} - 1)/{b_2^2} \leq C$ (since by assumption $b_1^2 > 0$ and $b_2^2<0$),   we obtain
$$
H(t,y)=h_{x_1 x_1}(y) +  b_1^2 V_{x_2 x_1}(y) +(h_{x_1 x_2}(y) + b_1^2 V_{x_2 x_2}(y)) {b_1^2}(e^{b_2^2 t} - 1)/{b_2^2} \leq C,
$$
for any $t\geq 0, \ y\in \mathbb{R}^2$.
Hence, using \eqref{eq H >0}  we conclude that
\begin{equation}\label{eq estimates on second derivatives}
0 \leq H(t,y) \leq C.
\end{equation}} 
%Therefore, recalling that $ b_2^2<0$, we have \begin{equation}\label{eq dominated}0 \leq e^{-\rho t} H(t,Z_t^{\delta,r}) \leq  C e^{-(\rho -b_2^2) t} \in L^1(\Omega \times [0,\infty) \times (0,1)).\end{equation}
By Step 1, the function $H(t,\cdot)$ is continuous in $\mathcal{W}$. 
Moreover, since  $Z^{\delta,r} \to X^x$ for $\mathbb{P}\otimes dt \otimes dr$-a.a.\ $(\omega,t,r) \in \Omega \times [0,\infty) \times (0,1)$, as $\delta \to 0$, we deduce that $H(t,Z_t^{\delta,r}) \to H(t,X_t^x)$, $\mathbb{P} \otimes dt \otimes dr$-a.e.\ as $\delta \to 0$. Therefore, thanks to \eqref{eq estimates on second derivatives}, by the dominated convergence theorem we have 
\begin{equation}\label{eq limit of M1 delta}
\lim_{\delta \to 0^+} M_1^\delta = \mathbb{E}\bigg[ \int_0^{\bar{\tau}_1 \land \bar{\tau}_2} e^{-\rho t} \Big( \hat{h}_{x_1}( X_t^x) +\hat{h}_{x_2}( X_t^x) {b_1^2}(e^{b_2^2 t} - 1)/{b_2^2} \Big)   dt \bigg].
\end{equation}  
Also, by Lemma \ref{lemma continuity stopp times} we have $\mathds{1}_{({\bar{\tau}_1 \land \bar{\tau}_2}, {\bar{\tau}_1^\delta \land \bar{\tau}_2})} \to 0$, $\mathbb{P}$-a.s.\ as $\delta \to 0$. Therefore we can again employ \eqref{eq estimates on second derivatives} and the dominated convergence theorem to conclude that 
\begin{equation}\label{eq limit M2 delta} 
\lim_{\delta \to 0} M_2^\delta = 0.
\end{equation}
Hence, since we already know that $V_{x_1 x_1}$ exists a.e., \eqref{eq direct computetion rapporto incrementale}, \eqref{eq limit of M1 delta} and \eqref{eq limit M2 delta} implies that
\begin{equation}\label{eq derivative from above}
V_{x_1 x_1}(x) \leq  \mathbb{E}\bigg[ \int_0^{\bar{\tau}_1 \land \bar{\tau}_2} e^{-\rho t} \Big( \hat{h}_{x_1}( X_t^x) +\hat{h}_{x_2}( X_t^x)  {b_1^2}(e^{b_2^2 t} - 1)/{b_2^2} \Big)   dt \bigg], \quad \text{a.e.\ in }\mathcal{W}.  
\end{equation}
Also, arguments similar to the one leading to \eqref{eq derivative from above}, allow to estimate $V_{x_1 x_1}$ from below, obtaining 
$$
V_{x_1 x_1}(x) \geq  \mathbb{E}\bigg[ \int_0^{\bar{\tau}_1 \land \bar{\tau}_2} e^{-\rho t} \Big( \hat{h}_{x_1}( X_t^x) +\hat{h}_{x_2}( X_t^x)  {b_1^2}(e^{b_2^2 t} - 1)/{b_2^2} \Big)   dt \bigg], \quad \text{a.e.\ in }\mathcal{W}, 
$$
which, together with \eqref{eq derivative from above}, implies that
\begin{equation}\label{eq V_x1x1 explicit}
V_{x_1 x_1}(x) = \mathbb{E}\bigg[ \int_0^{\bar{\tau}_1 \land \bar{\tau}_2} e^{-\rho t} \Big( \hat{h}_{x_1}( X_t^x) +\hat{h}_{x_2}( X_t^x)  {b_1^2}(e^{b_2^2 t} - 1)/{b_2^2} \Big)   dt \bigg], \quad \text{a.e.\ in }\mathcal{W}.   
\end{equation} 
{
We can finally study the continuity of $V_{x_1 x_1}$ in the variable $x_1$. From \eqref{eq V_x1x1 explicit} we have
\begin{align}\label{eq estimate continuity of Vx1x1}
| V_{x_1 x_1}(x+\delta e_1) & - V_{x_1 x_1}(x)| \\ 
& \quad \quad 
 =  \bigg| \mathbb{E}\bigg[ \int_0^{\bar{\tau}_1^\delta \land \bar{\tau}_2^\delta} e^{-\rho t} H(t,X_t^{x+\delta e_1}) dt  - \int_0^{\bar{\tau}_1 \land \bar{\tau}_2 } e^{-\rho t} H(t,X_t^{x}) dt \bigg] \bigg|=:N^\delta, \notag
\end{align}  
with $H$ defined in \eqref{eq def H}.
Following arguments similar to the ones leading to \eqref{eq limit of M1 delta} and \eqref{eq limit M2 delta}, we can show that
$\lim_{\delta \to 0} N^\delta = 0$.} 
Therefore, taking limits as $\delta \to 0$ in \eqref{eq estimate continuity of Vx1x1}, we deduce that $V_{x_1 x_1}$ is a.e.\ equal to a function which is continuous in the variable $x_1$.   
 
By Step 1, the function $V_{x_1 x_1}(x_1,\cdot)$ is locally Lipschitz continuous, uniformly in $x_1$. 
Thus, by the continuity of $V_{x_1 x_1}(\cdot, x_2)$, we conclude that the function $V_{x_1 x_1}$ is jointly continuous in both variables in $\mathcal{W}$. 
This completes the proof of the lemma. 
\end{proof}

\subsubsection{Characterization of the optimal control} 
In light of Lemma \ref{lemma V C2 patrick}, under the additional Assumption \ref{assumption additional for Patrick}, we can  construct the $\varepsilon$-optimal policies. 
Indeed, by employing the comparison principle to the second order ODE \eqref{eq HJB_(x_1 x_1) parametric} (regarded as an equation in the variable $x_2$, depending on the parameter $x_1$), one still obtains that $V_{x_1 x_1}>0$ in $\mathcal{W}$. 
This, together with the fact that $V_{x_1} \in C^1(\mathcal{W})$ (by Lemma \ref{lemma V C2 patrick}), allows to show that $S_\varepsilon$ is a $C^1$ curve in $\mathbb{R}^2$ and that the vector field $-e_1V_{x_1}/|V_{x_1}|$ is $C^1$ on $S_\varepsilon$, and nontangential to $S_\varepsilon$. 
All the assumptions in CASE 2 at p.\ 557 in  \cite{dupuis&ishii1993} (up to the boundedness of $\mathcal{W}$) are then satisfied, and we can therefore employ (a suitable extension to unbounded domains of) Theorem 5.1 at p.\ 572 in \cite{dupuis&ishii1993} in order to find the $\varepsilon$-optimal controls as in
Lemma \ref{lemma epsilon Skorokhod problems}. Finally, all the arguments in Section \ref{section proof of the main results} can be repeated in the case in which $\sigma \sigma^{\textbf{\tiny{$\top$}}}$ is degenerate. Overall, we have proved the following result. 

\begin{theorem}\label{theorem degenerate}
Consider the degenerate singular control problem described in \eqref{eq dynamics patrick}, with $h$ satisfying Condition \ref{assumption on h} in Assumption \ref{assumption} and Assumption \ref{assumption additional for Patrick}. Then, the thesis of Theorem \ref{theorem main characterization} holds. 
\end{theorem}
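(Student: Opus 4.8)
The plan is to run the three-step scheme of Sections \ref{section proof sigma constant} and \ref{section proof main result geometric}, replacing every appeal to the ellipticity of $\mathcal{L}$ by an argument adapted to the triangular structure \eqref{eq dynamics patrick}. First I would check that the auxiliary machinery of Appendix \ref{appendix auxiliary results} survives degeneracy: Theorem \ref{theorem V sol HJB} (finiteness, convexity and $W^{2;\infty}_{loc}$-regularity of $V$, together with the convergence of the penalization scheme) is obtained by perturbing $\sigma$ into a uniformly elliptic matrix and letting the perturbation vanish, as in Appendix~A of \cite{federico&ferrari&schuhmann2019}; and the Dynkin-game representation of $V_{x_1}$ (Theorems \ref{theorem connection Dynkin game PDE} and \ref{theorem Dynkin game connection}) holds as well. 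Since the proof of Proposition \ref{proposition V_x1 nondecreasing in x2} uses only the comparison principle for SDEs, the convexity of $V$ and the sign conditions in \eqref{eq dynamics patrick}, the monotonicity $b_{x_1}^2 V_{x_1 x_2}\ge 0$ is still available.

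The heart of the matter is Step b: the $\varepsilon$-optimal reflected policies require $S_\varepsilon=\partial\mathcal{W}_\varepsilon$ to be $C^1$ with a $C^1$, nontangential, inward-pointing reflection field, hence $V\in C^2(\mathcal{W})$ with $V_{x_1x_1}>0$. Under Assumption \ref{assumption additional for Patrick} the free boundaries are the graphs \eqref{eq representation free boundary} of nonincreasing locally Lipschitz functions $g_1,g_2$, as in \cite{federico&ferrari&schuhumann2020b}. I would first prove Lemma \ref{lemma continuity stopp times}: $V_{x_1x_1}\ge 0$, the monotonicity of $V_{x_1}$ and the affine-in-$\delta$ form \eqref{eq patrick explicit SDE} of $X^{2,x+\delta e_1}$ give $\bar\tau_i^\delta\ge\bar\tau_i$, while the reverse inequality in the limit follows by contradiction, combining the Lipschitz continuity of $g_1,g_2$ with the law of the iterated logarithm (which forces an immediate crossing of the boundary). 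Then, in Lemma \ref{lemma V C2 patrick}, I would regard the HJB equation on $\mathcal{W}$ as the second-order ODE \eqref{eq HJB parametric} in $x_2$ with parameter $x_1$ and bootstrap: Schauder interior estimates on the increment $V(y_1,\cdot)-V(x_1,\cdot)$ give Lipschitz continuity of $V,V_{x_2},V_{x_2x_2}$ in $(x_1,x_2)$; differentiating in $x_1$ and iterating yields the same for $V_{x_1},V_{x_1x_2},V_{x_1x_2x_2}$ and then for $V_{x_1x_1}(x_1,\cdot)$, uniformly in $x_1$; the missing continuity of $V_{x_1x_1}$ in the $x_1$-direction comes from the direct probabilistic identity \eqref{eq V_x1x1 explicit}, whose integrand is continuous in $x_1$ thanks to Lemma \ref{lemma continuity stopp times} and to the bound $0\le H(t,\cdot)\le C$ that follows from the semiconcavity estimate \eqref{eq estimate on the value function from patrick}.

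Once $V\in C^2(\mathcal{W})$ is in hand, the comparison principle applied to the ODE \eqref{eq HJB_(x_1 x_1) parametric} (again in $x_2$, with parameter $x_1$) gives $V_{x_1x_1}>0$ on $\mathcal{W}$; by the implicit function theorem $S_\varepsilon$ is a $C^1$ curve and $-e_1V_{x_1}/|V_{x_1}|$ is $C^1$, nontangential and points into $\mathcal{W}_\varepsilon$, so the hypotheses of CASE~2 at p.\ 557 in \cite{dupuis&ishii1993} hold (up to boundedness of $\mathcal{W}$) and an unbounded-domain version of Theorem~5.1 therein produces the $\varepsilon$-optimal Skorokhod reflections, i.e.\ the analogue of Lemma \ref{lemma epsilon Skorokhod problems}. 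Finally, Step c transfers unchanged: the moment bounds of Lemma \ref{lemma estimate SDE}, the convergence of the minimizing sequence to the unique optimal control in Lemma \ref{lemma construction optimal policy}, and Propositions \ref{prop X lives in W}, \ref{proposition the cont parts acts on the boundary} and \ref{propo charact jumps} all rely only on convexity, comparison and It\^o's formula, not on non-degeneracy of $\sigma\sigma^{\top}$, and the case $\bar x\notin\overline{\mathcal{W}}$ is handled as in Subsection \ref{subsection proof x outside waiting}. I expect the $C^2$-regularity of $V$ inside $\mathcal{W}$ to be the genuine obstacle: with ellipticity gone, one must exploit the ODE structure in $x_2$ for the transverse regularity and combine the ad hoc representation \eqref{eq V_x1x1 explicit} with the delicate boundary-hitting argument of Lemma \ref{lemma continuity stopp times} to obtain regularity in the controlled direction $x_1$.
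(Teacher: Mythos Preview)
Your proposal is correct and follows essentially the same approach as the paper: handle degeneracy via the perturbation of Appendix~A in \cite{federico&ferrari&schuhmann2019}, recover the monotonicity of $V_{x_1}$ as before, prove $V\in C^2(\mathcal{W})$ by combining Schauder interior estimates for the ODE in $x_2$ with the probabilistic representation \eqref{eq V_x1x1 explicit} and the hitting-time continuity of Lemma \ref{lemma continuity stopp times}, deduce $V_{x_1x_1}>0$ via the ODE comparison principle, construct the $\varepsilon$-reflections through CASE~2 and Theorem~5.1 of \cite{dupuis&ishii1993}, and then run Step~c unchanged. You have also correctly identified the $C^2$-regularity inside $\mathcal{W}$ as the only genuinely new difficulty.
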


Concluding, with respect to \cite{federico&ferrari&schuhmann2019}, we require in addition that $h_{x_1 x_1}>0$ and that Assumption \ref{assumption additional for Patrick} is satisfied. 
In this case, Theorem \ref{theorem degenerate} applies, and the construction of the optimal control discussed in Section 7 in  \cite{federico&ferrari&schuhmann2019} can be provided. We underline that in \cite{federico&ferrari&schuhmann2019} a construction of an optimal control is given in weak formulation, under a quite strong requirement on the running cost $h$. We refer to Proposition 7.3 in \cite{federico&ferrari&schuhmann2019} for more details.

\appendix
\section{On the HJB equation}\label{appendix auxiliary results}
In this subsection we prove that $V$ is a {convex} solution (in the a.e.\ sense) to the related HJB equation. 
The argument of the proof exploits the penalization method introduced in \cite{evans1979} for bounded domains (see also \cite{hynd2013} and the references therein), which we extend to $D$ thanks to suitable semiconcavity estimates%(see, e.g.,  \cite{cannarsa&sinestrani2004})
, in the spirit of \cite{buckdahn&cannarsa2010}. Although this result is somehow classical, we have not been able to find versions that exactly fit our setting, and we therefore provide its proofs in the following.    

\begin{theorem}
\label{theorem V sol HJB}
The value function $V$ is a {convex} $W_{loc}^{2;\infty}(D)$-solution to the equation  
\begin{equation}
\label{eq appendix HJB equation}
\max \{ \rho V-\mathcal{L}V - {h} , | V_{x_1}| - 1 \} = 0,  \quad \text{a.e.\ in } D.   
\end{equation}
\end{theorem}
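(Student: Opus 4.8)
The plan is to establish Theorem \ref{theorem V sol HJB} via the classical penalization scheme of \cite{evans1979}, suitably adapted to the unbounded domain $D$ through uniform local estimates obtained from semiconcavity bounds on $V$. First I would introduce, for each $\varepsilon > 0$, the penalized value function $V^\varepsilon$ as the value of an auxiliary \emph{absolutely continuous} control problem in which the singular control is replaced by a rate control $\dot{v}$ penalized by a term of the form $\frac{1}{\varepsilon}\beta^*(\cdot)$, where $\beta$ is the convex function fixed in the statement of Theorem \ref{theorem connection Dynkin game PDE} (equivalently, one can directly define $V^\varepsilon$ through its PDE). By standard theory for uniformly elliptic semilinear equations (Theorem 6.17 at p.~109 in \cite{gilbarg2001} and a bootstrap), $V^\varepsilon$ is a classical $C^2$ (indeed $C^4$) solution of the penalized HJB equation
\begin{equation*}
\rho V^\varepsilon - \mathcal{L}V^\varepsilon + \tfrac{1}{\varepsilon}\beta\big( (V^\varepsilon_{x_1})^2 - 1\big) = h, \quad \text{in } D,
\end{equation*}
and a comparison argument shows $V^\varepsilon \leq V^{\varepsilon'} \leq V$ for $\varepsilon \leq \varepsilon'$, so that $V^\varepsilon \uparrow \tilde V \leq V$ pointwise.

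The core of the argument is to produce, for each ball $B_R \subset D$, a constant $C_R$ independent of $\varepsilon \in (0,1)$ with $\|V^\varepsilon\|_{W^{2;\infty}(B_R)} \leq C_R$. The plan here is: (i) an $L^\infty$ bound on $V^\varepsilon$ on $B_R$, which follows from $0 \leq V^\varepsilon \leq V$ together with the a priori estimate $V(x) \leq \mathbb{E}[\int_0^\infty e^{-\rho t} h(X^x_t)\,dt] \leq C(1+|x|^p)$, itself a consequence of the growth bound on $h$, the moment estimate $\mathbb{E}[|X^x_t|^p] \leq C e^{p\bar L t}(1+|x|^p)$, and the condition $\rho > p\bar L$ in Assumption \ref{assumption}\eqref{ass discount factor} (both under Condition \ref{ass sigma constant} and, with the geometric moment bound of \cite{mao2008}, under Condition \ref{ass sigma geometric}); (ii) a \emph{semiconcavity estimate} $V^\varepsilon(x+y) + V^\varepsilon(x-y) - 2V^\varepsilon(x) \leq C_R|y|^2$ on $B_R$, proved probabilistically in the spirit of \cite{buckdahn&cannarsa2010} by perturbing the initial condition, using the semiconcavity-type inequalities assumed on $h$ (the third inequality in Condition \ref{assumption on h}) and the Lipschitz/convexity structure of $\bar b$, plus the integrability of the optimal (or near-optimal) trajectories — this is precisely where the precise lower bounds on $\rho$ enter, since one needs to control $\int_0^\infty e^{-\rho t}\mathbb{E}[|X_t|^{2p-2}]\,dt < \infty$; (iii) convexity of $V^\varepsilon$ (inherited from convexity of $h$ and $\bar b$ together with the monotonicity of $h$), which combined with (ii) gives a two-sided bound on $D^2 V^\varepsilon$ in $L^\infty_{loc}$, hence via the PDE a bound on $\mathcal{L}V^\varepsilon$, and then interior elliptic $W^{2;q}$ estimates upgrade this to the claimed $W^{2;\infty}_{loc}$ bound (using $\beta \geq 0$ to bound $h - \rho V^\varepsilon + \frac12 \mathrm{tr}(\bar\sigma\bar\sigma^\top D^2 V^\varepsilon) = \bar b D V^\varepsilon + \frac1\varepsilon\beta(\cdots) \geq 0$ from below, and the semiconcavity/convexity bounds from above).

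With the uniform bound in hand, I would extract, on each $B_R$ and along a subsequence, the convergences $(V^\varepsilon, DV^\varepsilon) \to (\tilde V, D\tilde V)$ uniformly on $B_R$ and $D^2 V^\varepsilon \rightharpoonup D^2\tilde V$ weakly in $L^2(B_R)$ (a diagonal argument over $R \to \infty$ makes this global in $D$), so that $\tilde V \in W^{2;\infty}_{loc}(D)$. Passing to the limit in the penalized PDE: the term $\frac1\varepsilon\beta((V^\varepsilon_{x_1})^2-1)$ is bounded in $L^2(B_R)$ by $C_R\varepsilon$ (rearrange the equation), so it vanishes; since $\beta$ is nonnegative and convex this forces $(\tilde V_{x_1})^2 \leq 1$ a.e. and $\rho\tilde V - \mathcal{L}\tilde V - h = 0$ a.e. on $\{|\tilde V_{x_1}| < 1\}$; on the complement one shows $\rho\tilde V - \mathcal{L}\tilde V - h \geq 0$ via the penalization sign (exactly as in Step 2 of the proof of Theorem \ref{theorem connection Dynkin game PDE}, using test functions supported where the gradient constraint is active), yielding that $\tilde V$ solves \eqref{eq appendix HJB equation} a.e. Finally I would verify $\tilde V = V$: the inequality $\tilde V \leq V$ is already known, and the reverse follows from a verification/optimality argument — applying the generalized Itô formula of \cite{krylov1980} to $(e^{-\rho t}\tilde V(X^{x;v}_t))_{t\geq 0}$ for an arbitrary admissible $v$ and using $\max\{\rho\tilde V - \mathcal{L}\tilde V - h, |\tilde V_{x_1}|-1\}=0$ gives $\tilde V(x) \leq J(x;v)$, hence $\tilde V(x) \leq V(x)$. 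The main obstacle is clearly step (ii), the uniform semiconcavity estimate on the unbounded domain: getting a constant $C_R$ that does not blow up as $\varepsilon \to 0$ requires carefully exploiting the quantitative convexity moduli in Condition \ref{assumption on h} together with sharp exponential moment bounds on the controlled trajectories, which is exactly why the somewhat technical discount conditions $\rho > 3\rho^*\bar L$ (resp.\ $\rho > 2\rho^*(\bar L + \sigma^2(\rho^*-1))$) appear in Assumption \ref{assumption}.
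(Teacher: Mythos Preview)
Your plan is essentially the paper's: penalize, prove uniform local semiconcavity of $V^\varepsilon$ probabilistically (this is indeed exactly where the discount conditions enter), combine with convexity to get the $W^{2;\infty}_{loc}$ bound, and pass to the limit. Two points where you diverge from or slip against the paper are worth flagging. First, in the paper's penalization (an auxiliary running-cost control problem) one has $V^\varepsilon \geq V$, not $V^\varepsilon \leq V$; the identification $\lim_\varepsilon V^\varepsilon = V$ is obtained directly by approximating any admissible $v$ by Lipschitz controls (as in \cite{chow&menaldi&robin1985}), rather than through a verification argument on the limit $\tilde V$ --- your verification route would need $\tilde V \leq V$, which is the opposite inequality to what the penalization provides. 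Second, the penalized equation only gives the pointwise bound $\tfrac{1}{\varepsilon}\beta((V^\varepsilon_{x_1})^2-1)\leq C_R$ (not $C_R\varepsilon$), and on the contact set $\{|V_{x_1}|=1\}$ one needs $\rho V-\mathcal{L}V-h\leq 0$, not $\geq 0$: the HJB is a $\max$, so both terms are $\leq 0$ everywhere with equality of the first on $\{|V_{x_1}|<1\}$. These are fixable details; the skeleton of your argument matches the paper.
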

\begin{proof}  %Uniqueness of $W_{loc}^{2;\infty}$-solutions to equation \eqref{eq appendix HJB equation} can be deduced by Theorem 2.7 at p.\ 15 in \cite{friedman2010}.
%Therefore, we only need to prove that $V$ is a $W_{loc}^{2;\infty}$-solution to equation \eqref{eq appendix HJB equation}. 
We divide the proof in five steps. 
\smallbreak \noindent 
\textit{Step 1.}
{
We first prove the convexity of $V$. 
Since the argument is similar to the one already used in the proof of Lemma \ref{lemma construction optimal policy}, we limit ourself to provide a sketch of the proof. \\ \indent
Take $x, \, \bar x \in D$ and $v, \, \bar v \in \mathcal V$. 
For $\lambda \in [0,1]$, set $x^\lambda := \lambda x + (1-\lambda) \bar x$ and  $v^\lambda := \lambda v + (1-\lambda) \bar v$. 
Define the processes
$$
Y:= X^{ x^\lambda;  v^\lambda} \quad \text{and} \quad Z := \lambda X^{x;v}+(1-\lambda) X^{\bar x; \bar v }. 
$$ 
Since the drift $\bar{b}^1$ is affine, we have  $Y^1 = Z^1$. 
Moreover, by convexity of $b^i$,  we find 
\begin{align*} 
Z_t^{i} 
& ={x}_{i}^\lambda +\int_0^t(\lambda b^i(X_s^{x;v})+(1-\lambda) b^i( X_s^{\bar x; \bar v }) ) ds + \int_0^t \bar \sigma (Z_s^{i}) d W_s^{i} \\
& \geq  {x}_{i}^\lambda +\int_0^t b^{i}(Y_s^{1},Z_s^{i} ) ds +\int_0^t \bar \sigma (Z_s^{i}) d W_s^{i}, 
\end{align*}
while $Y_t^{i}= {x}_{i}^\lambda + \int_0^t b^{i}(Y_s^{1},Y_s^{i}) ds +  \int_0^t \bar \sigma (Y_s^{i}) d W_s^{i}$.  
This, by the comparison principle for SDE (see \cite{ikeda1977}), implies that $ Y_t^{i} \leq Z_t^{i}, \ \text{for each } t\geq 0, \ \mathbb{P}\text{-a.s.}$.
Hence, by the monotonicity of $h$ in $x_i$ we find 
\begin{equation}\label{eq J is convex}
    J(x^\lambda; \lambda v +(1-\lambda) \bar v) \leq \lambda J(x; v) + (1-\lambda) J(\bar{x}; \bar v), 
\end{equation}
where we have also used that $h$ is convex and that $|\lambda v+(1-\lambda) \bar v |_t  \leq \lambda |v|_t + (1-\lambda) | \bar v |_t$, and that $e^{-\rho t}$ is positive and decreasing. 
Notice that, if either $x _1 \ne \bar x_1 $ or $v \ne \bar v$ and $\lambda \in (0,1)$, then, by strict convexity of $h$ in the variable $x_1$, we would obtain a strict inequality in \eqref{eq J is convex}. 
In particular, $J$ is strictly convex in  $v$ (from which the uniqueness of the optimal control, claimed in Remark \ref{lemma appendix existence optimal controls}, follows). \\ \indent
Next, take $\delta >0$ and assume the processes $v$ and $\bar v$ to be $\delta$-optimal for $x$ and $\bar x$, respectively;
i.e., assume $J(x;v) \leq V(x) +\delta$ and $J(\bar x; \bar v) \leq V( \bar x) +\delta$.
By \eqref{eq J is convex}, we obtain
$$
V(x^\lambda) \leq J(x^\lambda; \lambda v +(1-\lambda) \bar v) \leq \lambda J(x; v) + (1-\lambda) J(\bar{x}; \bar v) \leq \lambda V(x) +  (1-\lambda) V(\bar x) + \delta,
$$
which, by arbitrariness of $\delta$, gives the convexity of $V$.
}

\smallbreak \noindent 
\textit{Step 2.}
Let us start by introducing a family of penalized versions of the HJB equation \eqref{eq appendix HJB equation}. Let $\beta \in C^\infty(\mathbb{R})$ be a convex nondecreasing function with $\beta(r)=0$ if $r\leq 0$ and $\beta(r)=2r-1$ if $r\geq 1$. 
For each $\varepsilon > 0$, let $V^\varepsilon$ be the 
the value function of the penalized control problem 
\begin{equation}
\label{eq control problem penalized}
V^\varepsilon(x):=\inf_{ \alpha \in \mathcal{U}_\varepsilon } J_{\varepsilon}(x;\alpha) := \inf_{ \alpha \in \mathcal{U}_\varepsilon } \mathbb{E} \bigg[ \int_0^\infty  e^{-\rho t}(h(X_t^{x;\alpha}) + |\alpha_t^1| + \alpha_t^2 ) dt \bigg], \quad x\in D, 
\end{equation}
where $\mathcal{U}_\varepsilon$ is the set of $E_\varepsilon$-valued $\mathbb{F}$-progressively measurable processes, with
$
E_\varepsilon:= \{ \alpha=(\alpha^1, \alpha^2)\in \mathbb{R}\times [0, \infty)  \,| \, |\alpha^1|r - \frac{1}{\varepsilon}\beta(r(r+2))\leq \alpha^2\leq \frac{1}{\varepsilon}, \ \forall r>0 \}  
$. 
Here, with slight abuse of notation, $X^{x;\alpha}$ denotes the solution to
$
dX_t^{x;\alpha} = (b(X_t^{x;\alpha})+ e_1 \alpha_t^1) dt + \sigma dW_t  , \ t \geq 0, \ X_0^{x:\alpha}=x.
$
We point out that, under Condition \ref{ass sigma geometric} in Assumption \ref{assumption}, a result analogus to Lemma \ref{lemma geometric X>0} holds.
Arguing as in \cite{hynd2013} (throught a localization argument), we have that $V^\varepsilon$ is a $C^2(D)$
 solution to the partial differential equation 
\begin{equation}\label{eq appendix penalized HJB}
    \rho V^\varepsilon-\mathcal{L}V^\varepsilon + \frac{1}{\varepsilon}\beta ( (V_{x_1}^\varepsilon)^2 - 1 ) = h, \quad \text{in } D.
\end{equation} 
Moreover, the family $(V^\varepsilon)_{\varepsilon \in (0,1)}$ provides a monotone approximation of $V$; that is,
\begin{equation}
\label{eq app pointwise limit to V}
\lim_{\varepsilon \to 0} V^\varepsilon(x) = V(x) \quad \text{ and $\quad V^\varepsilon(x) \geq  V^{\bar{\varepsilon}}(x),$ for $\bar \varepsilon < \varepsilon, \quad $for each $x\in D$.}
\end{equation}
Take indeed $x\in D$. Observe that, for each $\varepsilon >0$, we have $V^\varepsilon(x) \geq V (x)$, as $\alpha^2 \geq 0$. 
Moreover, as in Theorem 2.2.\ in \cite{chow&menaldi&robin1985}, one can show that for each $\delta >0$ there exists a Lipschitz admissible process $w\in \mathcal{V}$ such that $J(x;w) \leq V(x) + \delta/2$. 
Since $w$ is Lipschitz, we have $dw_t=\alpha_t^1 dt$, for some bounded progressively measurable process $\alpha^1$. Then, defining $\alpha^2_t=\rho \delta /2$, we can find $\bar{\varepsilon}>0$ such that $\alpha:=(\alpha^1,\alpha^2) \in \mathcal{U}_\varepsilon$ for each $\varepsilon \in(0,\bar{\varepsilon})$. Moreover, with this choice of $\alpha$, we have that $J_\varepsilon(x;\alpha) \leq J(x;w) + \delta/2 \leq V(x) + \delta$, for each $\varepsilon \in (0, \bar{\varepsilon})$, completing the proof of \eqref{eq app pointwise limit to V}.
\smallbreak \noindent
\textit{Step 3.} In this step we show that, under Condition \ref{ass sigma constant} in Assumption \ref{assumption}, for each $R>0$, there exists a constant $C_R$ such that
\begin{equation}
\label{eq semiconcavity}
0 \leq \lambda V^\varepsilon(\bar{x}) + (1-\lambda)V^\varepsilon( x)-V^\varepsilon( \lambda \bar{x} +(1-\lambda)x)  \leq C_R \lambda (1-\lambda) |\bar{x}-x|^2, 
\end{equation} 
for each $\lambda \in [0,1]$,  $\bar{x}, x \in B_R$ and $\varepsilon >0$. 
By the same arguments leading to the convexity of $V$ {(see Step 1 in this proof)}, we have that, for each $\varepsilon>0$, the function $V^\varepsilon$ is convex.  Therefore, we only need to prove the last inequality in \eqref{eq semiconcavity}. 
Take  $\bar{x},\,x \in B_R$, $\lambda \in  [0,1]$ and set $x^\lambda:=\lambda \bar{x} + (1-\lambda)x$. 
Fix $\varepsilon >0$, an arbitrary $\delta>0$, and let $\alpha \in \mathcal{U}_\varepsilon$ be a $\delta$-optimal control for the problem \eqref{eq control problem penalized} with initial condition $x^\lambda$; that is, $J_\varepsilon(x^\lambda;\alpha) \leq V^\varepsilon (x^\lambda)+\delta$. 
Since $\alpha$ is not necessarily optimal for $x$ or $\bar{x}$, we have
\begin{align*}
\lambda V^\varepsilon(\bar{x})  & + (1-\lambda)V^\varepsilon( x)-V^\varepsilon( x^\lambda ) -\delta \\
& \leq  \lambda J_\varepsilon ( \bar{x};\alpha) + (1-\lambda)J_\varepsilon( x; \alpha)-J_\varepsilon(x^\lambda; \alpha)  \\
& \leq \mathbb{E} \bigg[ \int_0^\infty e^{-\rho t} \big( \lambda h(X_t^{ \bar{x};\alpha}) + (1-\lambda)h(X_t^{ x; \alpha})-h(X_t^{x^\lambda; \alpha}) \big) dt \bigg].
\end{align*}
Setting $Z_t := \lambda X_t^{ \bar{x};\alpha} + (1-\lambda) X_t^{ x; \alpha}$, using Condition \ref{assumption on h} in Assumption \ref{assumption} , we continue the latter chain of estimates to find  
\begin{align}\label{estimate A + B} 
\lambda V^\varepsilon(\bar{x})  & + (1-\lambda)V^\varepsilon( x)-V^\varepsilon( x^\lambda ) -\delta \\ \notag
& \leq \mathbb{E} \bigg[ \int_0^\infty e^{-\rho t} \big( \lambda h(X_t^{ \bar{x};\alpha}) + (1-\lambda)h(X_t^{ x; \alpha})-h(Z_t) \big) dt \bigg] \\ \notag
& \quad + \mathbb{E} \bigg[ \int_0^\infty e^{-\rho t} \big( h(Z_t)-h(X_t^{x^\lambda; \alpha}) \big) dt \bigg] \\ \notag 
& \leq C \lambda (1-\lambda) \mathbb{E} \bigg[ \int_0^\infty e^{-\rho t} \big(1+ \big| X_t^{x; \alpha} \big|^{p-2} + \big| X_t^{\bar{x}; \alpha}\big|^{p-2} \big) \big| X_t^{ \bar{x};\alpha} - X_t^{x;\alpha} \big|^2 dt \bigg] \\ \notag 
& \quad + C \mathbb{E} \bigg[ \int_0^\infty e^{-\rho t} \big(1+ \big| Z_t \big|^{p-1} + \big| X_t^{x^\lambda; \alpha}\big|^{p-1} \big) \big| Z_t- X_t^{x^\lambda; \alpha} \big| dt \bigg] \\ \notag 
& =: M_1 + M_2.  
\end{align}   
We will now estimate $M_1$ and $M_2$ separately.

First of all, by a standard use of Gr\"onwall's inequality, we find
\begin{equation}\label{estimate B}
 \big| X_t^{ \bar{x};\alpha} - X_t^{ x; \alpha} \big|  \leq C e^{\bar{L} t} |\bar{x} - x|. 
\end{equation} 
When $p= 2$, from \eqref{estimate B} and our assumptions on $\rho$, we immediately deduce that   
\begin{equation}
\label{eq esimates M2 finale}
M_1 \leq  C_R \lambda (1-\lambda) |\bar{x} - x|^2,
\end{equation} 
as desired. On the other hand, if $p > 2$, set $p':=(2p-1)/2$. Defining $q:=p'/(p-2)$ and denoting by $q^*$ its conjugate, we can employ H\"older's inequality and obtain
\begin{align*}
M_1 &\leq C \lambda(1-\lambda)|\bar{x} - x|^2 \bigg( \mathbb{E} \bigg[ \int_0^\infty e^{(2 \bar{L}-\rho(1-\frac{1}{q}) )q^* t} dt \bigg] \bigg)^{\frac{1}{q^*}} \bigg( \mathbb{E} \bigg[ \int_0^\infty e^{-\rho t}  \big(1+ \big| X_t^{x; \alpha} \big|^{p'} + \big| X_t^{\bar{x}; \alpha} \big|^{p'} \big) dt \bigg]  \bigg)^{\frac{1}{q}} \\
& \leq C \lambda(1-\lambda)(1+|x|^p + |\bar{x}|^p)^{\frac{1}{q}} |\bar{x} - x|^2 \leq C_R \lambda (1-\lambda) |\bar{x} - x|^2,
\end{align*} 
where we have used the requirements on $\rho$ in Condition \ref{ass sigma constant} in Assumption \ref{assumption}, and the estimate {as in Lemma \ref{lemma estimate SDE}}, which holds also for the penalized problem. 
      
We next estimate $M_2$. Since the gradient $D \bar b$ is Lipschitz we have the estimate  (see, e.g., Proposition 1.1.3 at p.\ 2 in \cite{cannarsa&sinestrani2004})
\begin{equation*}
    |\lambda \bar b(\bar{y}) +(1-\lambda) \bar b(y) - \bar b( \lambda \bar{y} + (1-\lambda) y) |  \leq C \lambda(1-\lambda)|\bar{y}-y|^2, \quad \text{for each $\bar{y},\, y\in \mathbb{R}^{2}.$} 
\end{equation*}
This, together with the  Lipschitz property of $b$, allows to obtain 
\begin{align}\label{eq z-x dinamics}
\big| X_t^{x^\lambda; \alpha} - Z_t  \big|  \leq &   \int_0^t \big| \bar b(X_s^{x^\lambda; \alpha}) -\lambda \bar b(X_s^{ \bar{x};\alpha}) - (1-\lambda) \bar b(X_s^{ x; \alpha})\big| ds \\ \notag
&\leq  \bar{L}  \int_0^t  \big( \big| X_s^{x^\lambda; \alpha} - Z_s  \big| + C   \lambda(1-\lambda ) \big| X_s^{ \bar{x};\alpha} - X_s^{ x; \alpha}\big|^2 \big) ds , \\ \notag
& \leq  \bar{L}  \int_0^t  \big( \big| X_s^{x^\lambda; \alpha} - Z_s  \big| + C  \lambda(1-\lambda )|\bar{x} - x|^2  e^{2 \bar{L} s}  \big)   ds , \\ \notag 
& \leq  C  \lambda(1-\lambda )|\bar{x} - x|^2  e^{2 \bar{L} t }   + \bar{L} \int_0^t   \big| X_s^{x^\lambda; \alpha} - Z_s  \big|    ds .  \notag 
\end{align} 
The latter estimate, after employing Gr\"onwall's inequality, leads to
\begin{equation}\label{estimate X -Z}
 \big|X_t^{x^\lambda; \alpha} - Z_t  \big|  \leq C \lambda(1-\lambda ) e^{3 \bar{L} t } |\bar{x} -x |^2.  
\end{equation}
Defining $q:=p'/(p-1)$ and denoting by $q^*$ its conjugate, we can again employ H\"older's inequality and \eqref{estimate X -Z} in order to obtain
\begin{align*} 
M_2 &\leq  C \lambda(1-\lambda )  |\bar{x} -x |^2 \mathbb{E} \bigg[ \int_0^\infty e^{(3\bar{L}-\rho) t} \big(1+ \big| Z_t \big|^{p-1} + \big| X_t^{x^\lambda; \alpha}\big|^{p-1} \big) dt \bigg]\\
& \leq C \lambda(1-\lambda)|\bar{x} - x|^2 \bigg( \mathbb{E} \bigg[ \int_0^\infty e^{(3 \bar{L}-\rho(1-\frac{1}{q}))q^* t} dt \bigg] \bigg)^{\frac{1}{q^*}} \bigg( \mathbb{E} \bigg[ \int_0^\infty e^{-\rho t}  \big(1+ \big| X_t^{x^\lambda; \alpha} \big|^{p'} + \big| Z_t \big|^{p'} \big) dt \bigg]  \bigg)^{\frac{1}{q}} \\
& \leq C \lambda(1-\lambda)(1+|x|^p + |\bar{x}|^p)^{\frac{1}{q}} |\bar{x} - x|^2 \leq C_R \lambda (1-\lambda) |\bar{x} - x|^2,
\end{align*}  
where we have used the estimate as in Lemma \ref{lemma estimate SDE} and the requirements on $\rho$ in Condition \ref{ass sigma constant} in Assumption \ref{assumption}.
This, together with \eqref{eq esimates M2 finale} and \eqref{estimate A + B}, thanks again to the arbitrariness of $\delta$, completes the proof of \eqref{eq semiconcavity}.

\smallbreak \noindent
\textit{Step 4.} We now prove the estimate \eqref{eq semiconcavity} under Condition \ref{ass sigma geometric} in Assumption \ref{assumption}.  To simplify the notation, we assume $d=2$, the generalization to $d>2$ being straightforward. 
We proceed from \eqref{estimate A + B}, and we estimate $M_1$ and $M_2$ from above. To this end, define the processes 
$$
{E}_t:= \exp[(b_1^1-\sigma^2/2)t + \sigma  W_t^1] \quad \text{and}\quad \hat{E}_t:= \exp[(\bar{L}-\sigma^2/2)t + \sigma W_t^2].
$$

We first estimate $M_1$. Observe that 
\begin{equation}\label{eq geom appe delta x1} 
    |X_t^{1,\bar{x};\alpha} -X_t^{1,{x};\alpha}|=|\bar{x}_1 -x_1| E_t,
\end{equation}
which we will use to estimate $|X_t^{2,\bar{x};\alpha} -X_t^{2,{x};\alpha}|$.  Define the process $\Delta$ as the solution to the SDE
$$
d\Delta_t = \bar{L}(|X_t^{1,\bar{x};\alpha} -X_t^{1,{x};\alpha}| + \Delta_t)dt + \sigma \Delta_t dW_t^2, \quad t \geq0, \quad  \Delta_0=|\bar{x}_2 -x_2|.  
$$  
Through a comparison principle, it is easy to check that 
$|X_t^{2,\bar{x};\alpha} -X_t^{2,{x};\alpha}|\leq \Delta_t,$ so that, using \eqref{eq geom appe delta x1} and the explicit expression for $\Delta$, we get
\begin{equation}\label{eq geom appe delta x2} 
    |X_t^{2,\bar{x};\alpha} -X_t^{2,{x};\alpha}| \leq C |\bar{x} -x| \hat{E}_t \big[ 1 + \begin{matrix} \int_0^t \end{matrix} E_s/\hat{E}_s ds \big]=:C |\bar{x} -x| P_t. 
\end{equation}
When $p=2$, the  estimate of $M_1$ can be easily deduced from \eqref{eq geom appe delta x1} and \eqref{eq geom appe delta x2}.
For $p>2$, by employing H\"older's inequality with exponent $q =p'/(p-2)$, we find
\begin{align}\label{eq geom app semifinal M1}
&\mathbb{E} \bigg[ \int_0^\infty e^{-\rho t} \big(1+ \big| X_t^{x; \alpha} \big|^{p-2} + \big| X_t^{\bar{x}; \alpha}\big|^{p-2} \big) (E_t^2 +P_t^2) dt \bigg] \\ \notag
& \leq C \bigg( \int_0^\infty e^{-\rho t} \mathbb{E}\big[1+\big| X_t^{x; \alpha} \big|^{p'} + \big| X_t^{\bar{x}; \alpha}\big|^{p'} \big] dt \bigg)^\frac{1}{q} \bigg(  \int_0^\infty e^{-\rho(1-\frac{1}{q})q^* t} \mathbb{E} \big[ E_t^{2q*}+ P_t^{2q*}\big] dt\bigg)^{\frac{1}{q^*}} \\ \notag
& \leq C ( 1+|x|^p + |\bar x|^p)^\frac{1}{q} \bigg(  \int_0^\infty e^{-\rho(1-\frac{1}{q})q^* t} \mathbb{E} \big[ E_t^{2q*}+ P_t^{2q*}\big] dt\bigg)^{\frac{1}{q^*}} \leq C_R < \infty.
\end{align} 
Here, we have also used \eqref{eq geom estimate SDE full}, while  the finiteness of the latter integral follows, after some elementary computations, from the requirements on $\rho$ in Condition \ref{ass sigma geometric} in Assumption \ref{assumption}.
Finally, by \eqref{eq geom appe delta x1}, \eqref{eq geom appe delta x2} and \eqref{eq geom app semifinal M1}, we obtain 
\begin{equation}\label{eq geo M1}
M_1 \leq  C_R \lambda (1-\lambda)|\bar{x} -x|^2.    
\end{equation}

We next estimate $M_2$. Since $\bar{b}^1$ is affine, we have $Z^1 = X^{1,x^\lambda; \alpha}$. Similarly to \eqref{eq z-x dinamics}, one has 
$$
Z_t^2 - X_t^{2,x^\lambda; \alpha}  \leq \int_0^t (C\lambda(1-\lambda )|X_s^{2,\bar{x}; \alpha} - X_s^{2,x; \alpha}|^2 + \bar{L} \big| X_s^{x^\lambda; \alpha} - Z_s  \big|)    ds + \sigma \int_0^t   (Z_s^2 - X_s^{2,x^\lambda; \alpha} )dW_s^2 . 
$$
Therefore, employing again a comparison principle and using \eqref{eq geom appe delta x2}, we see  that
\begin{equation}\label{eq geom app estimate z-x} 
|Z_t^2 - X_t^{2,x^\lambda; \alpha} | \leq C\lambda (1-\lambda) \hat{E}_t \int_0^t \frac{|X_s^{2,\bar{x}; \alpha} - X_s^{2,x; \alpha}|^2}{\hat{E}_s} ds  \leq  C\lambda (1-\lambda) |\bar{x} -x|^2  \int_0^t \frac{\hat{E}_t}{\hat{E}_s} P_s^2 ds. 
\end{equation}
Also,  H\"older's inequality with exponent $q =p'/(p-1)$ yields
\begin{align}\label{eq geom app semifinal M2}
&\mathbb{E} \bigg[ \int_0^\infty e^{-\rho t} \big(1+ \big| Z_t \big|^{p-1} + \big| X_t^{x^\lambda; \alpha}\big|^{p-1} \big)\int_0^t \frac{\hat{E}_t}{\hat{E}_s} P_s^2 ds\, dt \bigg]\\ \notag
& \leq C\bigg( \int_0^\infty e^{-\rho t} \mathbb{E}\big[1+\big| X_t^{x; \alpha} \big|^{p'} + \big| X_t^{\bar{x}; \alpha}\big|^{p'} \big] dt \bigg)^\frac{1}{q}  \bigg( \mathbb{E} \bigg[ \int_0^\infty e^{-\rho(1-\frac{1}{q})q^* t} \bigg( \int_0^t \frac{\hat{E}_t}{\hat{E}_s} P_s^2 ds \bigg)^{q*} dt \bigg] \bigg)^{\frac{1}{q^*}}   \\ \notag
& \leq C (  1+|x|^p + |\bar x|^p)^\frac{1}{q}  \bigg( \mathbb{E} \bigg[ \int_0^\infty e^{-\rho(1-\frac{1}{q})q^* t} \bigg( \int_0^t \frac{\hat{E}_t}{\hat{E}_s} P_s^2 ds \bigg)^{q*} dt \bigg] \bigg)^{\frac{1}{q^*}} \leq C_R<\infty,  \notag
\end{align} 
Again, here we have also employed \eqref{eq geom estimate SDE full}, while  the finiteness of the latter integral follows, after some elementary computations, from the requirements on $\rho$ in Condition \ref{ass sigma geometric} in Assumption \ref{assumption}. Finally, combining \eqref{eq geom app estimate z-x} and \eqref{eq geom app semifinal M2}, we obtain $M_2 \leq  C_R \lambda (1-\lambda)|\bar{x} -x|^2$, which, together with \eqref{eq geo M1} and \eqref{estimate A + B}, implies \eqref{eq semiconcavity}.
 
\smallbreak \noindent
\textit{Step 5.}   
From \eqref{eq semiconcavity} we deduce that, for each bounded open set $B\subset D$, there exists a constant $C_B>0$ such that
\begin{equation}
\label{eq appendix Sobolev estimates}
\sup_{\varepsilon \in (0,1)} \| V^\varepsilon \|_{W^{2;\infty}(B)} \leq C_B.  
\end{equation}
This estimate allows, by mean of classical arguments (exploiting  Sobolev compact embedding theorem of $W^{2;q}(B)$ into  $C^1(B)$ for $q>2+d$ and the weak compactness of the closed unit ball in  $W^{2;2}(B)$) to improve the convergence in \eqref{eq app pointwise limit to V}. Indeed (on each subsequence) we now have: 
\begin{align}\label{eq appendix Sobolev limits} 
     & (V^\varepsilon,DV^\varepsilon) \text{ converges to $(V,DV)$ uniformly  in $ B$}; \\ \notag
     & D^2V^\varepsilon \text{ converges to $ D^2V$ weakly  in $L^2( B)$}.
\end{align}
Let us now prove that $V$ solves the HJB equation \eqref{eq appendix HJB equation}. First of all observe that, from \eqref{eq appendix penalized HJB} and \eqref{eq appendix Sobolev estimates}, (unless to take a larger $C_B$) we have 
\begin{equation}
\label{eq app estimates beta}
\frac{1}{\varepsilon} \beta ( (V_{x_1}^\varepsilon)^2 - 1 ) \leq C_B, \quad \text{in $B$}.
\end{equation}
Hence, taking pointwise limits in \eqref{eq appendix penalized HJB} and \eqref{eq app estimates beta}, we obtain 
$$ 
\rho V-\mathcal{L}V - {h} \leq 0,  \quad \text{and} \quad 
|V_{x_1}| - 1 \leq 0 \quad  \text{a.e.\ in } D. 
$$ 
Suppose now that the inequality $|V_{x_1}| - 1\leq 0$ is strict in $\bar{x} \in D$. By continuity of  $V_{x_1}$, there exist $\eta>0$ and a neighborhod $N$ of $\bar{x}$ such that $|V_{x_1}(x)| - 1 \leq - \eta$ for each $x \in N$. Therefore, by uniform convergence in $N$, for each $\varepsilon$ small enough we have $|V_{x_1}^{\varepsilon}(x)| - 1 \leq - \eta/2$, and therefore, by \eqref{eq appendix penalized HJB}, that $\rho V^\varepsilon   -\mathcal{L}V^\varepsilon - {h} = 0$ in $N$. Passing again to the limit, this in turn implies that $\rho V -\mathcal{L}V - {h} = 0$ in $N$,  completing the proof of the theorem.

\end{proof}

\section{Proof of Lemma \ref{lemma waiting region nonempty} and of Proposition \ref{propo charact jumps}}\label{appendix proof propositions kruk}
\subsection{Proof of Lemma \ref{lemma waiting region nonempty}}
We give a proof for $d=2$, the case $d>2$ is analogous.
The set $\mathcal{W}_1(z)$ is an open interval, since,  by convexity of $V$, the function $V_{x_1} (\cdot, z)$ is nondecreasing. We therefore show that the set $\mathcal{W}_1(z)$ is nonempty. 
Suppose that Condition \ref{ass sigma constant} in Assumption \ref{assumption} is in place. Arguing by contradiction,
if $\mathcal{W}_1(z) = \emptyset $, then, by the continuity of $V_{x_1}$, we  have  $V_{x_1}(\cdot,z)= 1$ or $V_{x_1}(\cdot,z)= -1$. If  $V_{x_1}(\cdot,z) = 1$, we have $V(x_1,z) +\kappa_2\geq V(x_1,z)-V(y,z)  = \int_y^{x_1} V_{x_1}(r,z)dr = x_1 - y \to \infty$ as $y \to -\infty$. Therefore $V(x_1,z)=\infty$, contradicting the finiteness of $V$ (see Theorem \ref{theorem V sol HJB} in Appendix \ref{appendix auxiliary results}). In the same way, we can not have that $V_{x_1}(\cdot,z)= -1$, which implies $\mathcal{W}_1(z) \ne \emptyset$.

 On the other hand, suppose that Condition \ref{ass sigma geometric} in Assumption \ref{assumption} holds. 
 Arguing by contradiction, we assume that $\mathcal{W}_1(z)$ is empty. From the continuity of $V_{x_1}$, we have  $V_{x_1}(\cdot,z)= 1$ or $V_{x_1}(\cdot,z)= -1$. 
 If  $V_{x_1}(\cdot,z) = - 1$, then  we have $V(x_1,z)+ \kappa_2 \geq V(x_1,z)-V(y,z) = - \int_{x_1}^{y} V_{x_1}(r,z)dr = y-x_1 \to \infty$ as $y \to \infty$. Therefore $V(x_1,z)=\infty$, contradicting the finiteness of $V$. 
We therefore assume that $V_{x_1}(\cdot,z)= 1$ and we show that this leads anyway to a contradiction. 
 
For a generic $x_1 \in \R$ with $0<x_1 < x_1^*$, let $v \in \mathcal{V}$ be optimal for the initial condition $x:=(x_1,z)$, with $dv=\gamma d|v|$. By repeating the arguments leading to \eqref{eq to compare support of v} in the proof of Proposition \ref{proposition the cont parts acts on the boundary}, an application of It\^o's formula leads to 
$$ 
\mathbb{E} \bigg[  \int_{[0,\infty)} e^{-\rho t} ( 1 + V_{x_1}(X_{t-}^{{x};{v}}) {\gamma}_t ) d|{v}|_t \bigg] \leq 0. $$ 
This in turn implies, using $0\leq 1-|V_{x_1}| \leq 1+V_{x_1} u$ for all $u \in \mathbb{R}$ with $|u|=1$, that  
$$
\mathbb{E}[|v|_0(1+\gamma_0)]=\mathbb{E}[|v|_0(1+\gamma_0V_{x_1}(X_{0-}^{x;v})] \leq \mathbb{E} \bigg[  \int_{[0,\infty)} e^{-\rho t} ( 1 + V_{x_1}(X_{t-}^{{x};{v}}) {\gamma}_t ) d|{v}|_t \bigg] \leq 0,
$$
where the first equality follows from the assumption $V_{x_1}(\cdot,z)=1$. Also, since $|\gamma_0|=1$, $\mathbb{E}[|v|_0(1+\gamma_0)]\geq0$, which combined with the latter inequality gives $\mathbb{E}[|v|_0(1+\gamma_0)]=0$. 
In other words, a possible jump at time zero must be of negative size. Therefore, since $x_1 < x_1^*$, as in the proof of Lemma \ref{lemma geometric X>0}, we deduce that $v$ has no jump at time zero; that is, \begin{equation}\label{eq jump to the left}
\mathbb{P}[|v|_0>0]=0. 
\end{equation}

Next, fix $0< x_1< y_1<x_1^*$ and set $x=(x_1,z)$ and $y=(y_1,z)$. Since we are assuming that $V_{x_1}(\cdot,z)=1$, we have \begin{equation}\label{eq geom value function linear}
V(y)-V(x)=\int_{x_1}^{y_1} V_{x_1}(r,z)dr={y_1}-{x_1}.
\end{equation} 
Next, denote by $v$ and $w$ the optimal control for the initial conditions $x$ and $y$, respectively. By \eqref{eq jump to the left}, neither $v$ or $w$ has a jump a time zero, so that, using \eqref{eq geom value function linear}, we find
$$
J(y;v+x_1-y_1)=J(x;v) + |x_1 - y_1| = V(x) +y_1-x_1 =V(y).
$$
This, by uniqueness of the optimal control implies that $w=v + x_1 -y_1$, so that, since $x_1 <y_1$, the control $w$  has a negative jump at time zero, contradicting \eqref{eq jump to the left}. 

Therefore also the assumption $V_{x_1}(\cdot,z)=1$ leads to a contradiction, completing the proof of  Lemma \ref{lemma waiting region nonempty} under Condition \ref{ass sigma geometric} in Assumption \ref{assumption}.

\subsection{Proof of Proposition \ref{propo charact jumps}}
 We split the proof in three steps. 
\smallbreak \noindent
\textit{Step 1.} Let $x\in \partial  \mathcal{W} $ be such that $x\in I$ for some interval $I \subset \R^{2}$, with $ I \subset \partial \mathcal{W}$ and of the form
$$
I=I_{a,c}:=\{ a + r \eta \,|\, r \in [0,c]\},
$$ 
for some $ a \in \mathbb{R}^{2},$ with $ \eta =V_{x_1}(y)e_1,$ for each $y\in I\setminus \{a\}$. 
Denote by $\mathcal{H}$ the set of all such $x$.
Furthermore, assume that $I$ in the above definition is maximal, in the sense that $a-r\eta \notin \partial \mathcal{W}$, for every $r>0$.

Observe that, since $\partial_\eta V(\cdot) = \eta DV = |V_{x_1}(\cdot)|^2 = 1$, then
\begin{equation}
\label{eq V linear in I}
V(a + r\eta ) = V(a) + r, \quad \text{for each } r\in [0,c].  \end{equation}
We have that 
\begin{equation*}
    \mathcal{H}= \bigcup_{i=1}^\infty \big\{ y \in \partial \mathcal{W} \, | \,  V(y)-V(y -e_1 V_{x_1}(y)/i ) =1/ i  \big\}.
\end{equation*}

Suppose now that $\bar{x} \in \mathcal{H}$. Then there exists $a\in \mathbb{R}^{2}$ and $c>0 $ such that $x\in I_{a,c}$. Let $v^a \in \mathcal{V}$ be an optimal control for $a$.
By \eqref{eq V linear in I}, we find $$
J(\bar{x}; a-\bar{x} +v^a) = J(a;v^a) + |a-\bar x|  =  V(a) +|a- \bar x| = V(\bar x), 
$$
which, by the uniqueness of the optimal control, implies that $\bar{v}_t=a-\bar{x} + v_t^a$, for any $t\geq 0$. This means exactly that the optimally controlled state starting from $\bar{x}$ jumps immediately to $a$.
\smallbreak \noindent
\textit{Step 2.} Let now $\bar{x} \in \overline{\mathcal{W}}$ be generic. We  want to prove that $X^{\bar{x};\bar{v}}$ jumps only at those times $t$ for which $X_{t-}^{\bar{x};\bar{v}} \in \mathcal{H}$. We argue by contradiction, and suppose that 
$$
\mathbb{P}[\, \omega \in \Omega \text{ s.t.\ there exists } t\geq 0 \text{ s.t. }   X_{t-}^{\bar{x};\bar{v}}(\omega) \notin \mathcal{H}\text{ and } |X_t^{\bar{x};\bar{v}}(\omega)-X_{t-}^{\bar{x};\bar{v}}(\omega)| >0 ] > 0.
$$
For each $\varepsilon > 0$, let 
\begin{equation}
\label{eq definition tau epsilon}    
\tau_\varepsilon := \inf \{ t \geq 0 \, | \, X_{t-}^{\bar{x};\bar{v}} \notin \mathcal{H}, \ |X_t^{\bar{x};\bar{v}}-X_{t-}^{\bar{x};\bar{v}}| \geq \varepsilon \}.
\end{equation}
Take $\varepsilon  >0$ small enough such that $\mathbb{P} [\tau_\varepsilon < \infty ] >0$.
Consider a sequence $(\bar{\tau}_k)_{k \in \mathbb{N}}$ of stopping times exhausting the jumps of $X^{\bar{x};\bar{v}}$ (see, e.g., Proposition 2.26 at p.\ 10 in  \cite{karatzas&shreve1998}, for a construction of such a sequence), so that 
\begin{equation}\label{eq sequence stopp times}
\tau_\varepsilon := \inf \{ \bar{\tau}_k \, | \,k \in \mathbb{N},\ X_{\bar{\tau}_k -}^{\bar{x};\bar{v}} \notin \mathcal{H}, \ |X_{\bar{\tau}_k}^{\bar{x};\bar{v}}-X_{\bar{\tau}_k -}^{\bar{x};\bar{v}}| \geq \varepsilon \}.
\end{equation}
Since the jumps of $\bar{v}$ coincides with the jumps of $X^{\bar{x};\bar{v}}$, if $X^{\bar{x};\bar{v}}$ would have an infinite number of jumps of size greater than $\varepsilon$ on some interval $[0,T]$ with $T\in (0,\infty)$, then $\bar{v}$ would not be of bounded variation on the interval $[0,T]$.
Thus $X^{\bar{x};\bar{v}}$ has only a finite number of jumps of size greater than $\varepsilon$ on each interval $[0,T]$.  This reveals that  
$\tau_\varepsilon$ in \eqref{eq sequence stopp times} is actually the minimum of a finite number of stopping times, which implies that $\tau_\varepsilon$ is itself a stopping time. 

Next, {with the notation $d \bar v = \bar \gamma d |\bar v|$}, on $\{\tau_\varepsilon < \infty \}$, we find 
\begin{align}\label{strict inequality}
V( X_{\tau_\varepsilon}^{\bar{x};\bar{v}}) - V( X_{\tau_\varepsilon - }^{\bar{x};\bar{v}}) & = \int_0^1 DV(\tau_\varepsilon, X_{\tau_\varepsilon - }^{\bar{x};\bar{v}} + \lambda ( X_{\tau_\varepsilon}^{\bar{x};\bar{v}} -X_{\tau_\varepsilon - }^{\bar{x};\bar{v}}) )( X_{\tau_\varepsilon}^{\bar{x};\bar{v}} -X_{\tau_\varepsilon - }^{\bar{x};\bar{v}}) d\lambda \\ \notag
& = \int_0^1 V_{x_1}(\tau_\varepsilon, X_{\tau_\varepsilon - }^{\bar{x};\bar{v}} + \lambda ( X_{\tau_\varepsilon}^{\bar{x};\bar{v}} -X_{\tau_\varepsilon - }^{\bar{x};\bar{v}}) ) \bar{\gamma}_{\tau_\varepsilon} ( |\bar{v}|_{\tau_\varepsilon} -|\bar{v}|_{\tau_\varepsilon - }) d\lambda \\ \notag
& > -| X_{\tau_\varepsilon}^{\bar{x};\bar{v}} -X_{\tau_\varepsilon - }^{\bar{x};\bar{v}}|,
\end{align}
where the strict inequality follows from the fact that, by Proposition \ref{prop X lives in W},  $X_{\tau_\varepsilon}^{\bar{x};\bar{v}} \in \overline{\mathcal{W}}$ but $\tau_\varepsilon$ is such that $X_{\tau_\varepsilon -}^{\bar{x};\bar{v}} \notin \mathcal{H}$.  
Recalling that 
 $\tau_\varepsilon$ is a stopping time, define the sequence of stopping times $\tau_k := (\tau_\varepsilon + \frac{1}{k}) \land T$. By the dynamic programming principle (see, e.g., \cite{haussmann.suo1995b}) we have, for each $k$  
\begin{equation}\label{eq dpp}
V(\bar{x}) = \mathbb{E} \bigg[ \int_0^{\tau_k} e^{-\rho t} h( X_t^{\bar{x};\bar{v}} ) dt + \int_{[0,\tau_k)} e^{-\rho t} d|\bar{v}|_t + e^{-\rho\tau_k} V( X_{\tau_k-}^{\bar{x};\bar{v}}) \bigg]. 
\end{equation}
Therefore, taking limits  as $k\to \infty$ in \eqref{eq dpp}, using \eqref{strict inequality}  we find 
\begin{align*}
V(\bar{x}) & = \mathbb{E} \bigg[ \int_0^{\tau_\varepsilon} e^{-\rho t} h( X_t^{\bar{x};\bar{v}} ) dt + \int_{[0,\tau_\varepsilon]} e^{-\rho t} d|\bar{v}|_t + e^{-\rho\tau_\varepsilon} V(X_{\tau_\varepsilon }^{\bar{x};\bar{v}}) \bigg] \\ 
& = \mathbb{E} \bigg[ \int_0^{\tau_\varepsilon} e^{-\rho t} h( X_t^{\bar{x};\bar{v}} ) dt 
+ \int_{[0,\tau_\varepsilon)} e^{-\rho t} d|\bar{v}|_t + e^{-\rho\tau_\varepsilon} | X_{\tau_\varepsilon}^{\bar{x};\bar{v}} -X_{\tau_\varepsilon - }^{\bar{x};\bar{v}}| + e^{-\rho \tau_\varepsilon} V( X_{\tau_\varepsilon }^{\bar{x};\bar{v}}) \bigg] \\
& > \mathbb{E}\bigg[ \int_0^{\tau_\varepsilon}e^{-\rho t} h( X_t^{\bar{x};\bar{v}} ) dt 
+ \int_{[0,\tau_\varepsilon)} e^{-\rho t} d|\bar{v}|_t
+  e^{-\rho \tau_\varepsilon} V( X_{\tau_\varepsilon -}^{\bar{x};\bar{v}}) \bigg] = V(\bar{x}),
\end{align*}
which is a contradiction, hence $X^{\bar{x};\bar{v}}$ jumps only at times $t$ such that $X_{t-}^{\bar{x};\bar{v}} \in \mathcal{H}$.
\smallbreak \noindent
\textit{Step 3.} Suppose now that $X_{t-}^{\bar{x};\bar{v}} \in \mathcal{H}$ for some $t>0$. It remains to prove that, also in this case,  $\mathbb{P}$-a.s.\ the process $X^{\bar{x};\bar{v}}$ jumps at time $t$ to the endpoint of the interval $I$. 
Now, for any $\mathbb{F}$-stopping time $\tau$, $\mathbb{P} \circ ( X_{\tau -}^{\bar{x};\bar{v}})^{-1}$-a.s.\ in $\mathbb{R}^{2}$,  
we have that the control 
\begin{equation}\label{control after time}
    \bar{v}_t^{\tau} := \bar{v}_{\tau + t} -\bar{v}_{\tau-}, \quad t\geq 0,
\end{equation} 
is optimal for the initial condition $X_{\tau-}^{\bar{x};\bar{v}}$ (see Lemma 2.11 and the discussion at p.\ 1616 in \cite{kruk2000}).  
Let now $\tau^1$ be the first time at which the optimally controlled process $X^{\bar{x};\bar{v}}$ enters the set $\mathcal{H}$. Combining \eqref{control after time} together with Step 1, we obtain that $X^{\bar{x};\bar{v}}$ jumps to the endpoint of $I$. By constructing an increasing sequence $\tau_k$ of hitting times of the set $\mathcal{H}$, which exhausts the set in which $X^{\bar{x};\bar{v}} \in \mathcal{H}$, we conclude that $\mathbb{P}$-a.s.\ the process $X^{\bar{x};\bar{v}}$ jumps at time $t$ to the endpoint of the interval $I$.

\smallskip
\textbf{Acknowledgements.} 
Funded by the Deutsche Forschungsgemeinschaft (DFG, German Research Foundation) - Project-ID 317210226 - SFB 1283. 
We are also grateful to two anonymous reviewers for their careful reading and for their precise and useful comments.
%We are grateful to  for fruitful conversations.

\bibliographystyle{siam} 
\bibliography{main.bib}
\end{document}